\def\rr{{\mathbb R}}
\def\rn{{{\rr}^n}}
\def\zz{{\mathbb Z}}
\def\nn{{\mathbb N}}
\def\cc{{\mathbb C}}
\def\cs{{\mathcal S}}
\def\cx{{\mathcal X}}
\def\cf{{\mathcal F}}
\def\cl{{\mathcal L}}
\def\cq{{\mathcal Q}}
\def\fz{\infty}
\def\az{\alpha}
\def\loc{{\mathop\mathrm{\,loc\,}}}
\def\lz{\lambda}
\def\gz{{\gamma}}
\def\vz{\varphi}
\def\tz{\theta}
\def\wz{\widetilde}
\def\hs{\hspace{0.3cm}}
\def\ls{\lesssim}
\def\gfz{\genfrac{}{}{0pt}{}}
\def\rn{{{\mathbb R}^n}}
\def\rr{{\mathbb R}}
\def\cc{{\mathbb C}}
\def\zz{{\mathbb Z}}
\def\nn{{\mathbb N}}
\def\hs{\hspace{0.3cm}}
\def\fz{\infty}
\def\az{\alpha}
\def\supp{{\mathop\mathrm{\,supp\,}}}
\def\lz{\lambda}
\def\gz{{\gamma}}
\def\vz{\varphi}
\def\tz{\theta}
\def\wz{\widetilde}
\def\ls{\lesssim}
\def\laz{\langle}
\def\raz{\rangle}
\def\dint{\displaystyle\int}
\def\lf{\left}
\newcommand{\bt}{{B}_{p,q}^{s,\tau}(\rn)}
\newcommand{\sbt}{{b}_{p,q}^{s,\tau}(\rn)}
\newcommand{\cfi}{{\mathcal{F}}^{-1}}
\def\hs{\hspace{0.3cm}}
\newtheorem{theorem}{Theorem}[section]
\newtheorem{corollary}[theorem]{Corollary}
\newtheorem{lemma}[theorem]{Lemma}
\newtheorem{proposition}[theorem]{Proposition}
\theoremstyle{definition}
\newtheorem{definition}[theorem]{Definition}
\newtheorem{remark}[theorem]{Remark}
\numberwithin{equation}{section}
\begin{document}

%%%%% To ease editing, for IMPAN journals add:

\baselineskip=17pt

%%%%%%%%%%%%%%%%

\title[The  Haar System in Besov-type Spaces]{The  Haar System in Besov-type Spaces}

\author[W. Yuan]{Wen Yuan}
\address{Laboratory of Mathematics and Complex Systems
(Ministry of Education of China)\\School of Mathematical Sciences\\
Beijing Normal University\\
Beijing 100875, People's Republic of China}
\email{wenyuan@bnu.du.cn}

\author[W. Sickel]{Winfried Sickel}
\address{Mathematisches Institut\\Friedrich-Schiller-Universit\"at Jena\\
Jena 07743, Germany}
\email{winfried.sickel@uni-jena.de}

\author[D. Yang]{Dachun Yang\,*}
\address{Laboratory of Mathematics and Complex Systems
(Ministry of Education of China)\\School of Mathematical Sciences\\
Beijing Normal University\\
Beijing 100875, People's Republic of China}
\email{dcyang@bnu.du.cn}

\date{}

\begin{abstract}
Some Besov-type spaces $B^{s,\tau}_{p,q}(\mathbb{R}^n)$ can be characterized
in terms of the behavior of the Fourier--Haar coefficients.
In this article, the authors discuss some necessary restrictions for the parameters
$s$, $\tau$, $p$, $q$ and $n$ of this characterization.
Therefore, the authors  measure the regularity of the characteristic function $\mathcal X$
of the unit cube  in $\mathbb{R}^n$
via the Besov-type spaces $B^{s,\tau}_{p,q}(\mathbb{R}^n)$. Furthermore,
the authors study necessary and sufficient conditions such that the operation
$\langle f, \mathcal{X} \rangle$ generates a continuous linear functional on $B^{s,\tau}_{p,q}(\mathbb{R}^n)$.
\end{abstract}

\subjclass[2010]{Primary 42C15; Secondary 46E35}

\keywords{Besov space, Besov-type space, characteristic function,
orthonormal Haar system, smooth wavelets.
\\
\indent *\, Corresponding author}

\maketitle

%&&&&&&&&&&&&&&&&&&&&&&&&&&&&&&&&&&&&&&&&&&&&&&&&&&&&&&&&&&
%&&&&&&&&&&&&&&&&&&&&&&&&&&&&&&&&&&&&&&&&&&&&&&&&&&&&&&&&&&

\section{Introduction}

%&&&&&&&&&&&&&&&&&&&&&&&&&&&&&&&&&&&&&&&&&&&&&&&&&&&&&&&&&&
%&&&&&&&&&&&&&&&&&&&&&&&&&&&&&&&&&&&&&&&&&&&&&&&&&&&&&&&&&&

Besov-type spaces $\bt$ are generalizations of Besov spaces $B^s_{p,q}(\rn)$.
On the other hand, and more intuitively, they are representing relatives of bmo and so-called $Q$-spaces,
which have been introduced about 30 years ago in complex analysis with applications also in harmonic analysis
and partial differential equations.
As the most transparent special case, let us consider $\bt$ with $p=q$, $\tau\in[0, 1/p)$ and $s\in(0,1)$.
Then a function $f$ belongs to $B^{s,\tau}_{p,p} (\rn)$ if
\[
\sup_{P \in \mathcal{Q},~ |P|\ge 1} \, \frac{1}{|P|^\tau}\, \lf\{\int_P |f(x)|^p dx \right\}^{1/p}< \infty
\]
and
\[
\sup_{P \in \mathcal{Q}} \, \frac{1}{|P|^\tau}\, \lf\{\int_P \int_P \frac{|f(x)-f(y)|^p}{|x-y|^{sp+n}}\, dxdy \right\}^{1/p}< \infty\, ;
\]
see \cite[4.3.3]{ysy}. Here and hereafter, $\mathcal{Q}$ denotes the collection of all  dyadic cubes in $\rn$.
The main philosophy of these Besov-type spaces consists in characterizing the regularity by means of controlling
(weighted) differences of $f$ on cubes.
This makes clear that there must exist a connection to Morrey--Campanato spaces.

To recall the definition  of Besov-type spaces, we let $\vz_0$, $\vz\in\mathcal{S}(\rn)$ be such that
\begin{align}\label{e1.0}
\supp \cf {\vz}_0\subset \{\xi\in\rn:\,|\xi|\le2\}\quad\mathrm{and}\quad
|\cf {\vz}_0(\xi)|\ge C>0\hs\mathrm{if}\hs |\xi|\le \frac53
\end{align}
and
\begin{align}\label{e1.1}
\supp
\cf {\vz}\subset \left\{\xi\in\rn:\,\frac12\le|\xi|\le2\right\} \ \ \mathrm{and}\ \
|\cf {\vz}(\xi)|\ge C>0\ \ \mathrm{if}\ \ \frac35\le|\xi|\le \frac 53,
\end{align}
where $C$ is a positive constant independent of $\vz_0$ and $\vz$.
Observe that there exist positive constants $A$ and $B$ such that
\[
A \le
\cf {\vz_0}(\xi) +
\sum_{j=1}^\infty \cf {\vz} (2^{-j}\xi) \le B \quad \mbox{for any}\quad \xi \in \rn\, .
\]
In what follows, for any $j\in\nn$, we let $\vz_j(\cdot):=2^{jn}\vz(2^j\cdot)$.

For any given $j\in\mathbb{Z}$ and $k\in\mathbb{Z}^n$,
denote by $Q_{j,k}$ the \emph{dyadic cube} $2^{-j}([0,\,1)^n+k)$
and $\ell(Q_{j,k})$
its \emph{side length}. Let
$$\cq:=\lf\{Q_{j,k}:\,j\in\mathbb{Z},
\,k\in\mathbb{Z}^n\right\},\quad \cq^\ast:=\lf\{Q\in\cq:\ell(Q)\le1\right\}$$
and $j_Q:=-\log_2\ell(Q)$ for any $Q\in\cq$.

\begin{definition}\label{d1}
Let $s\in\rr$, $\tau\in[0,\infty)$, $p,$ $q \in(0,\fz]$ and $\vz_0$, $\vz\in\cs(\rn)$ be
as in \eqref{e1.0} and \eqref{e1.1}, respectively.
The \emph{Besov-type space} $\bt$ is defined as the space of all $f\in \mathcal{S}'(\rn)$ such that
$$\|f\|_{\bt}:=
\sup_{P\in\mathcal{Q}}\frac1{|P|^{\tau}}\left\{\sum_{j=\max\{j_P,0\}}^\fz
2^{js q}\left[\int_P
|\vz_j\ast f(x)|^p\,dx\right]^{q/p}\right\}^{1/q}<\fz$$
with the usual modifications made in case $p=\fz$ and/or $q=\fz$.
\end{definition}

\begin{remark}\label{grund}
\begin{enumerate}
\item[(i)] It is known that the  space $\bt$ is a quasi-Banach space (see \cite[Lemma~2.1]{ysy}).

\item[(ii)] It is easy to see that
$B^{s,0}_{p,q}(\rn)$ coincides with the classical Besov space $B^{s}_{p,q}(\rn)$.

\item[(iii)] We have the monotonicity with respect to $s$ and with respect to $q$, namely,
\[
B^{s_0,\tau}_{p,q_0}(\rn)\hookrightarrow B^{s_1,\tau}_{p,q_1}(\rn)\qquad \mbox{if}\quad s_0 >s_1
\quad \mbox{and} \quad q_0,\,q_1\in(0,\infty],
\]
as well as
\[
B^{s,\tau}_{p,q_0} (\rn)\hookrightarrow B^{s,\tau}_{p,q_1}(\rn)  \qquad \mbox{if}\quad q_0 \le q_1\,.
\]

\item[(iv)] Let  $s\in\rr$ and $p\in (0,\fz]$. Then it holds that
\[
B^{s,\tau}_{p,q}(\rn) = B^{s+n(\tau-1/p)}_{\fz,\fz}(\rn)
\]
if either $q\in(0,\fz)$ and $\tau\in(1/p,\fz)$ or
$q=\fz$ and $\tau\in[1/p,\fz)$; see \cite{yy4}.
In case $s+n(\tau-1/p)>0$, the space $B^{s+n(\tau-1/p)}_{\fz,\fz}(\rn)$ is a H\"older-Zygmund space
with a transparent description in terms of differences; see, for instance, \cite[Section 2.5.7]{t83}.

\item[(v)] Since $\bt$ with $\tau>1/p$ is a classical Besov space,   we will mainly consider  the situation
$\tau\in[0,1/p]$ in this article.
\end{enumerate}
\end{remark}

As a generalization of the classical Besov spaces,
the inhomogeneous Besov-type spaces
$B^{s,\tau}_{p,q}(\rn)$,
restricted to the Banach space case, were first introduced by El
Baraka in \cite{el021, el022, el062}. The extension to quasi-Banach
spaces was done in \cite{yy1,yy2}.
Indeed, the homogeneous version of
$B^{s,\tau}_{p,q}(\rn)$ for full parameters was introduced in
 \cite{yy1,yy2} to cover both the Besov
 spaces and the (real-variable) $Q$ spaces   as special cases. Recall that  $Q$ spaces were originally from complex
 analysis  (see \cite{axz,ex,x,x06}) and their real-variable version
 has found  a lot of applications in harmonic analysis  (see, for instance, \cite{ejpx,dx,dx05,x08,kxzz})
and partial differential equations (see, for instance,  \cite{x07,lz,lz10,lz12,ly13,LXY}).
A systematic treatment on the inhomogeneous Besov-type spaces
$\bt$ was later  given in \cite{ysy}. We refer the reader  also to \cite{syy,yy102,yy4,yyz,s011,s011a} for further results on
these spaces. In recent years, the Besov-type spaces and some of their special cases have also found
interesting applications in some partial differential equations such as (fractional) Navier-Stokes equations
(see, for instance, \cite{x07, ly13, LXY, t13b,t14,t15,Le16,Le16-2,Le18}).

Of particular importance for us is the following
embedding into $C_{ub} (\rn)$, the \emph{class
of all complex-valued, uniformly continuous and bounded functions
on $\rn$}.
For its  proof, we refer the reader to \cite[Proposition 2.6(i)]{ysy} and \cite{s011}.

\begin{proposition}\label{grundp}
Let $s\in\rr$, $\tau\in[0,\infty)$ and  $p,\, q \in(0,\fz]$.
\begin{enumerate}
\item[{\rm(i)}] If $s+n(\tau-1/p)>0$, then $B^{s,\tau}_{p,q}(\rn) \hookrightarrow C_{ub} (\rn)$.

\item[{\rm(ii)}]  Let $p\in(0,\fz)$, $q\in(0,\fz]$, $\tau\in(0, 1/p)$ and
$s+ n\tau -\frac np =0$.
Then $
B^{s,\tau}_{p,q}(\rn) \not \subset C_{ub}(\rn) \, .$

\item[{\rm(iii)}] Let $p\in(0,\fz)$ and $q\in(0,\fz]$.
Then $B^{0, 1/p}_{p,q}(\rn)  \not \subset C_{ub}(\rn)\, .$
\end{enumerate}
\end{proposition}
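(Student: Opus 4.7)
My plan is to treat the positive embedding in (i) via a band-limited $L^\fz$ estimate and telescoping sum, and to handle the negative statements (ii), (iii)---both of which sit on the critical line $s+n(\tau-1/p)=0$---by exhibiting explicit discontinuous or unbounded functions of finite $\bt$-quasi-norm.

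For (i), the crucial ingredient is a local Plancherel--Polya / Nikolskii inequality: since $\varphi_j\ast f$ has Fourier support in $\{|\xi|\le 2^{j+1}\}$, for every dyadic cube $Q$ of side $2^{-j}$,
\[
\|\varphi_j\ast f\|_{L^\fz(Q)}\lesssim 2^{jn/p}\,\|\varphi_j\ast f\|_{L^p(R)},
\]
where $R\supset Q$ is a bounded enlargement (the remainder absorbed by the Schwartz tail of $\varphi$ via a Peetre maximal function). Feeding this into Definition~\ref{d1} with $P=R$, so that $|P|^\tau=2^{-jn\tau}$ and only the level-$j$ block survives in the $q$-sum, gives $\|\varphi_j\ast f\|_{L^p(R)}\le 2^{-j(s+n\tau)}\|f\|_{\bt}$ and hence
\[
\|\varphi_j\ast f\|_{L^\fz(\rn)}\lesssim 2^{-j[s+n(\tau-1/p)]}\,\|f\|_{\bt},
\]
uniformly in $j\ge 0$ (the low-frequency block is treated identically with $\varphi_0$). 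Under $s+n(\tau-1/p)>0$ this is a convergent geometric series, so $\sum_{j\ge 0}\varphi_j\ast f$ converges absolutely and uniformly on $\rn$. Each summand is band-limited with bounded $L^\fz$-norm, hence by Bernstein's inequality has bounded derivative and is uniformly continuous; since $C_{ub}(\rn)$ is closed under uniform convergence, a routine polynomial-removal argument then completes the embedding.

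For (ii) and (iii) my first proposal would be the lacunary model
\[
f(x):=\sum_{j=1}^\fz c_j\,\psi(2^j(x-x_j))
\]
with $\psi\in\cs(\rn)$, $\psi(0)=1$, $\supp\psi\subset[-1/2,1/2]^n$. Frequency localization forces $\varphi_k\ast f\approx c_k\,\varphi_k\ast\psi(2^k(\cdot-x_k))$, and a direct computation of $|P|^{-\tau}\|\varphi_k\ast\psi(2^k\cdot)\|_{L^p(P)}$ on dyadic cubes $P$ of side $2^{-J}$, $J\le k$, shows that on the critical line each scale contributes order $|c_k|$, so that $\|f\|_{\bt}\lesssim\|(c_j)\|_{\ell^q}$. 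For case (ii) with $q>1$, centers $x_j\equiv 0$ and weights $c_j=1/j$ yield $f(0)=+\fz$ with finite $\bt$-norm. For case (iii), and for small $q$ where $\ell^q\subset\ell^1$ blocks the naive construction, one chooses distinct centers $x_j\to 0$ so that the bumps become pairwise disjoint and sets $c_j\equiv 1$: the resulting function oscillates between $0$ and $1$ on every scale near the accumulation point, is bounded but not uniformly continuous, and still has finite $\bt$-quasi-norm because the cube supremum registers at most one bump per $P$.

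The main obstacle is verifying this last $\bt$-bound at the delicate value $\tau=1/p$, where cubes $P$ of small side near the accumulation point contain many nested bumps at different scales; one must control the $q$-sum of their contributions against the renormalization $|P|^{-\tau}$, which is exactly what makes (iii) a nontrivial refinement of (ii). The cleaner route, in fact the one pursued in the paper, is to use the main theorem to show that the characteristic function $\cx$ itself already belongs to $B^{0,1/p}_{p,q}(\rn)$ for the admissible range of $(p,q)$, whereupon the boundary discontinuity of $\cx$ directly contradicts membership in $C_{ub}(\rn)$ and settles (iii); a scaled/atomic variant handles (ii).
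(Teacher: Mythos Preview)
The paper does \emph{not} supply a proof of Proposition~\ref{grundp}; immediately before the statement it writes ``For its proof, we refer the reader to \cite[Proposition 2.6(i)]{ysy} and \cite{s011}.'' So there is no in-paper argument to compare against, and in particular your closing claim that ``the one pursued in the paper'' is to deduce (iii) from the main theorem (Theorem~\ref{charact}) is factually wrong.

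More importantly, that suggested route is logically dangerous in this paper's architecture: Proposition~\ref{grundp}(i) is explicitly invoked in Step~3 of the proof of Theorem~\ref{charact} (to obtain the necessity of $s+n(\tau-1/p)\le 0$). Hence appealing to the full Theorem~\ref{charact} to establish Proposition~\ref{grundp} would be circular. It is true that only the \emph{sufficiency} direction (Step~2, which relies solely on the wavelet characterization) is needed to place $\cx$ in $B^{0,1/p}_{p,q}(\rn)$, and that part does not use Proposition~\ref{grundp}; but if you want to go this way you must say so explicitly and restrict the appeal to Step~2. Note also that for part~(ii) the function $\cx$ does \emph{not} always belong to $B^{n/p-n\tau,\tau}_{p,q}(\rn)$: by Theorem~\ref{charact}(ii) this requires $s<1/p$, i.e.\ $\tau>(n-1)/(np)$, so your ``scaled/atomic variant handles (ii)'' is too vague to cover the range $\tau\in(0,(n-1)/(np)]$.

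Your sketch for (i) is the standard Nikolskii/Plancherel--Polya argument and is essentially what the cited reference \cite[Proposition~2.6(i)]{ysy} does; the only points needing care are the local $L^p\!\to\!L^\infty$ estimate for band-limited functions (which genuinely requires a Peetre maximal function or an equivalent device, not merely ``Schwartz tails'') and the identification of the $\cs'$-limit with a continuous function. Your own lacunary-bump construction for (ii)--(iii) is a reasonable strategy, and you correctly identify the delicate verification at $\tau=1/p$; but since you then abandon it for a claim about the paper that is incorrect, the proposal as written leaves (ii) and (iii) unproved.
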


Many classical function spaces can be described via the  Haar system.
Let us mention here at least the works of Haar \cite{h10},  Schauder \cite{s28},  
Marcinkiewicz \cite{m37} and
Ciesielski \cite{c75}, related to $L^p$,  of  Ropela \cite{r76}, Triebel \cite{t73,t78}, 
Oswald \cite{Os79,Os81} and
Kahane and Lemarie \cite{KL}, related to (isotropic) Besov spaces,
of Wojtaszczyk \cite{woj1}, related to Hardy spaces, of Kamont \cite{Ka94,Ka96,Ka97}, treating
anisotropic Besov spaces, and of Seeger and Ullrich \cite{SU1,SU2} and  Garrig\'os
et al. \cite{GSU,GSU18}, investigating  Bessel
potential and Triebel--Lizorkin spaces.
Good sources are also the monographs by Wojtaszczyk \cite[8.3]{woj} and Triebel \cite[Chapter~2]{t10}.

Nowadays it is known that the Haar system is an unconditional Schauder basis in
Besov spaces $B^s_{p,q}(\rn)$ if  $p,q\in(0,\fz)$ and
\[
\max \lf\{ n\lf( \frac 1p -1\right), \frac 1p -1\right\} < s< \min \lf\{1,\frac 1p \right\}\,;
\]
see, for instance, \cite[Theorem~2.21]{t10}. Already in \cite{t78} Triebel has found that,
in cases
\[
 s< \max \lf\{ n\lf( \frac 1p -1\right), \frac 1p -1\right\} \qquad \mbox{or}\qquad
s> \min \lf\{1,\frac 1p \right\},
\]
the Haar system is not an unconditional Schauder basis. For positive and  negative results in  borderline  cases
we refer the reader to Oswald \cite{Os79,Os81,Os18} and Garrig\'os, Seeger and Ullrich \cite{GSU19}.

The pairs $(p,q)$ with $\max\{p,q\}= \infty$ have to be excluded because
the associated Besov spaces are no longer separable.
However, also for those pairs and the associated spaces there exists a characterization in terms of the Fourier--Haar coefficients; see
\cite[Theorem~2.21]{t10}.
This is of particular interest because the Besov-type spaces with $\tau >0$ are always
nonseparable (for all $s$, all $p$ and all $q$).
Finally, we wish to mention that Triebel \cite{t13}
has established a characterization of some classes $\cl^r B^s_{p,q}(\rn)$
(generalizations of Besov spaces also related to Morrey--Campanato spaces, see \cite{t13b,t14})
in terms of the Haar system. For the relations of  $\cl^r B^s_{p,q}(\rn)$  and $\bt$ we refer the reader to \cite{ysy13} and \cite[2.7]{t14}.

The main purpose of this article is to establish some necessary restrictions for the
parameters $s$, $\tau$, $p$, $q$ and $n$ of the characterization of Besov-type spaces via the Fourier--Haar coefficients.
Two obvious restrictions come from  the following properties:
\begin{itemize}
\item the regularity of the characteristic function $\cx$ of the unit cube in Besov-type spaces $B^{s,\tau}_{p,q}(\mathbb{R}^n)$;
\item  the operation
$\langle f, \cx \rangle$ generates a continuous linear functional on $B^{s,\tau}_{p,q}(\mathbb{R}^n)$.
\end{itemize}
We will give answers in terms of restrictions on the
parameters below for which the above two properties hold.

In a continuation \cite{ysy18} of this article, we will discuss sufficient conditions
for characterizing  $B^{s,\tau}_{p,q}(\mathbb{R}^n)$ in terms of Fourier--Haar
coefficients including some applications to pointwise multipliers.

The structure of this article is as follows.
Section \ref{Main} is devoted to a description of our main results
(Theorems \ref{charact}, \ref{limit5}, \ref{limit5b} and \ref{general})
and some comments. We give the necessary and sufficient condition
on the parameters $s$, $p$, $q$ and $\tau$ in Theorem \ref{charact}
so that the characteristic function $\cx$ belongs to $\bt$,
while in Theorems \ref{limit5} and \ref{limit5b}, we
also give the near sharp condition on the parameters so that the operation
$\langle f, \cx\rangle$ generates a continuous linear
functional on $B^{s,\tau}_{p,q}(\mathbb{R}^n)$.

In Section \ref{s3}, we recall some basic
notation and properties of  Besov-type spaces, as well as some tools used to prove the main results.
Our main tool is the wavelet characterization of
$B^{s,\tau}_{p,q}(\mathbb{R}^n)$ in terms of sufficiently smooth Daubechies wavelets.
However, also interpolation ($\pm$-method of Gustavsson and Peetre)
 and characterizations in terms of differences will be used.
In Section \ref{s4}, we give the proof of Theorem \ref{charact}, while Section \ref{s5} is
devoted to the proofs of Theorems \ref{limit5}, \ref{limit5b} and Corollary \ref{klassisch}
as well as Theorem \ref{general}.

Finally, we make some convention on the notation used in this article.
As usual, $\nn$ denotes the \emph{natural numbers}, $\nn_0$
the \emph{natural numbers including $0$},
$\zz$ the \emph{integers} and
$\rr$ the \emph{real numbers}.
We also use $\cc$ to denote the \emph{complex numbers} and $\rn$ the
\emph{$n$-dimensional  Euclidean space}.
All functions are assumed to be complex-valued, namely,
we consider functions
$f:~ \rn \to \cc$.
Let $\mathcal{S}(\rn)$ be the collection of all \emph{Schwartz functions} on $\rn$ equipped
with the well-known topology determined by a countable family of seminorms
and denote by $\mathcal{S}'(\rn)$ its \emph{topological dual}, namely,
the space of all bounded linear functionals on $\mathcal{S}(\rn)$
equipped with the weak-$\ast$ topology.
The symbol $\cf$ refers to  the \emph{Fourier transform},
$\cfi$ to its \emph{inverse transform},
both defined on $\cs'(\rn)$. Recall that, for any $\vz\in \cs(\rn)$ and $\xi\in\rn$,
$$\mathcal{F}\vz(\xi):=(2\pi)^{-\frac n2}\int_\rn e^{-\iota x\xi}\vz(x)\,dx\quad$$
\mbox{and}\quad $$\mathcal{F}^{-1}\vz(\xi):=\mathcal{F}\vz(-\xi),$$ where, for any $x:=(x_1,\ldots,x_n)$ and
$\xi:=(\xi_1,\ldots,\xi_n)\in \rn$, $x\xi:=\sum_{i=1}^nx_i\xi_i$ and $\iota:=\sqrt{-1}$.

All function spaces, which  we consider in this article, are subspaces of $\cs'(\rn)$,
namely, spaces of equivalence classes with respect to
almost everywhere equality.
However, if such an equivalence class  contains a continuous representative,
then usually we work with this representative and call also the equivalence class a continuous function.
By $C^\infty_c(\rn)$ we mean the set of all infinitely differentiable functions on $\rn$ with compact supports.

The \emph{symbol}  $C $ denotes   a positive constant
which depends
only on the fixed parameters $n$, $s$, $\tau$, $p$, $q$ and probably on auxiliary functions,
unless otherwise stated; its value  may vary from line to line.
Sometimes we   use the symbol ``$ \ls $''
instead of ``$ \le $''. The \emph{meaning of $A \ls B$} is
given by that there exists a positive constant $C\in(0,\fz)$ such that
 $A \le C \,B$.
The symbol $A \asymp B$ will be used as an abbreviation of
$A \ls B \ls A$.
Given two quasi-Banach spaces $X$ and $Y$, the operator norm of a linear
operator $T:\, X\to Y$ is denoted by $\|T\|_{X\to Y}$.
Many times we shall use the abbreviation
\begin{align} \label{sigma}
\sigma_p := n \, \max \lf\{0, \frac 1p - 1 \right\},\quad \forall\, p\in(0,\fz].
\end{align}
For any $a\in\rr$, $\lfloor a\rfloor$ denotes the largest integer not greater than $a$.

%&&&&&&&&&&&&&&&&&&&&&&&&&&&&&&&&&&&&&&&&&&
%&&&&&&&&&&&&&&&&&&&&&&&&&&&&&&&&&&&&&&&&&&

\section{Main results\label{Main}}

%&&&&&&&&&&&&&&&&&&&&&&&&&&&&&&&&&&&&&&&&&&
%&&&&&&&&&&&&&&&&&&&&&&&&&&&&&&&&&&&&&&&&&&

First, we recall the definition of the Haar system. Let $\widetilde{\cx}$ denote the characteristic function of the interval  $[0,1)$.
The generator of the Haar system in dimension $1$ is denoted by $\widetilde{h}$, namely,
\[
\widetilde{h}(t):=\lf\{
\begin{array}{lll}
1 & \qquad & \mbox{when}\quad t\in[0,1/2),
\\
-1 & \quad & \mbox{when}\quad t\in [1/2,1),\\
0  &\quad &\mbox{otherwise}\, .
\end{array}
\right.
\]
The functions, we are interested in, are just tensor products of these two  functions.
For any given $\varepsilon:= (\varepsilon_1, \, \ldots \, , \varepsilon_n)$ with
$\varepsilon_i \in \{0,1\}$, define
\begin{align}
h_\varepsilon (x) := \lf[\prod_{\{i: \, \varepsilon_i=0\}} \widetilde{\cx}(x_i)\right]\,
\lf[\prod_{\{i: \, \varepsilon_i=1\}} \widetilde{h}(x_i)\right]\, ,
\qquad \forall\, x=(x_1, \, \ldots \, , x_n) \in \rn \, .
\end{align}
This results in $2^n$ different functions.
In case $\varepsilon = (0, \, \ldots \, , 0)$  we
\emph{always  write $\cx$ instead of
$h_{(0,\ldots \, , 0)}$}. The other $2^n-1$ functions will be enumerated in an appropriate way
and denoted by $\{h_1, \ldots \, , h_{2^n-1}\}$. These functions
are the generators of the inhomogeneous Haar system in $\rn$.
In what follows, for any $i\in\{1,\ldots,2^n-1\}$,
we let
\begin{align}\label{wavb}
\cx_{j,m}:=2^{jn/2}\cx(2^j\cdot-m),\  \  h_{i,j,m}:=2^{jn/2}h_i(2^j\cdot-m),
\ \ \forall\, j\in\nn_0,\ \forall\, m\in\zz^n.
\end{align}
Then the set
\begin{align*}
H:= \{\cx_{0,m},\,h_{i,j,m}:\ i\in\{1,\ldots,2^n-1\},\ j\in\nn_0,\ m\in\zz^n\}
\end{align*}
forms the well-known orthonormal Haar wavelet system in $\rn$ (we shall call it
just the \emph{Haar system}).

For a function $f\in L^1_\loc(\rn)$ the Haar wavelet expansion is given by
\[
 f=\sum_{m\in\zz^n}\, \langle f , \cx_{0,m}  \rangle \, \cx_{0,m} + \sum_{i=1}^{2^n-1} \sum_{j=0}^\infty
\sum_{m\in\zz^n}\, \langle f , h_{i,j,m} \rangle\, h_{i,j,m}\, .
\]
When trying to characterize a space $\bt$ by the associated Haar wavelet expansions as in \cite[Theorem~2.21]{t10}
there are two obvious necessary conditions:
\begin{itemize}
\item The Haar wavelet coefficients have to be well defined
for any element $f \in \bt$, i.\,e., the mappings $f \mapsto \langle f , \cx_{0,m}  \rangle$
and $f \mapsto \langle f , h_{i,j,m} \rangle$ extend to continuous linear functionals on $\bt$ for any $m,\,i$ and $j$.
\item
The partial sums
\[
S_N f :=
 \sum_{m\in\zz^n}\, \langle f , \cx_{0,m}  \rangle \, \cx_{0,m} + \sum_{i=1}^{2^n-1} \sum_{j=0}^N
\sum_{m\in\zz^n}\, \langle f , h_{i,j,m} \rangle\, h_{i,j,m}
\]
are uniformly bounded in $\bt$, i.\,e.,
\[
\sup_{N \in \nn_0} \sup_{\|f \|_{\bt} \le 1}\, \| S_N f \|_{\bt}<\infty \, .
\]
Of course, this requires $H \subset \bt$.
\end{itemize}

In the framework of the classical Besov spaces,
it is well known that $\cx \in B^s_{p,q} (\rn)$
if and only if
\begin{align}\label{eq-000}
\mbox{either} \quad  s=1/p \quad  \mbox{and} \quad q= \infty
\qquad \mbox{or} \quad s<1/p\quad  \mbox{and} \quad q\in(0,\infty]\, ;
\end{align}
see \cite[Lemma~2.3.1/3]{RS}.
This means that, for fixed $p \in (0,\fz]$,
the smallest Besov space, which the function $\cx$ belongs to, is given by
$B^{1/p}_{p,\infty} (\rn)$.
Now we turn to  the smoothness of $\cx$ and $h_{i,j,m}$ with respect to the scale $\bt$.

\begin{theorem}\label{charact}
Let $s \in \rr$ and $p$, $q\in (0,\fz]$.
\begin{enumerate}
\item[{\rm(i)}] Let $\tau \in(1/p,\fz)$. Then
$\cx \in \bt$ if and only if
$s \le n (1/p - \tau )$.

\item[{\rm(ii)}] Let $\tau \in [0, 1/p]$. Then
$\cx \in \bt$ if and only if
either
\begin{align}\label{eq-001}
s = \frac 1p,\,  \qquad q=\infty \qquad \mbox{and}\qquad  s \le n \lf(\frac 1p - \tau \right)
\end{align}
or
\begin{align}\label{eq-002}
s < \frac 1p,\,  \qquad q\in(0, \infty] \qquad  \mbox{and}
\qquad s \le n \lf(\frac 1p - \tau \right)\, .
\end{align}
\item[{\rm(iii)}] All elements of $H$ have the same smoothness properties with respect to Besov type spaces,
i.\,e., $h_{i,j,m} \in \bt$ if and only if $\cx_{0,m} \in \bt$  if and only if $\cx \in \bt$.
Here $i,\,j,\,m$ are arbitrary (but as in $H$).
\end{enumerate}
\end{theorem}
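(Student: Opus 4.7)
My plan is to handle the three parts in the order (i), (ii), (iii), with (ii) as the main case.

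Part (i) is a direct reduction: by Remark~\ref{grund}(iv), whenever $\tau>1/p$ (for any $q$) we have $B^{s,\tau}_{p,q}(\rn)=B^{s+n(\tau-1/p)}_{\infty,\infty}(\rn)$; the classical criterion \eqref{eq-000} with $p=q=\infty$ then gives $\mathcal{X}\in B^t_{\infty,\infty}(\rn)$ iff $t\le 0$, which translates to $s\le n(1/p-\tau)$.

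For part (ii) the natural tool is the wavelet characterization of $B^{s,\tau}_{p,q}(\rn)$ in terms of a sufficiently smooth Daubechies wavelet system $\{\psi_{i,j,m}\}$ recalled in Section~\ref{s3}, which identifies $\|f\|_{B^{s,\tau}_{p,q}(\rn)}$ with
\begin{equation*}
\sup_P\frac{1}{|P|^\tau}\left\{\sum_{j=\max\{j_P,0\}}^\infty 2^{j(s+n/2-n/p)q}\left[\sum_{i}\sum_{m:\,Q_{j,m}\subset P}|\langle f,\psi_{i,j,m}\rangle|^p\right]^{q/p}\right\}^{1/q}.
\end{equation*}
By the vanishing moments of $\psi_{i,j,m}$, the coefficient $\langle\mathcal{X},\psi_{i,j,m}\rangle$ vanishes unless $\supp\psi_{i,j,m}$ crosses $\partial[0,1)^n$; when it does, $|\langle\mathcal{X},\psi_{i,j,m}\rangle|\lesssim 2^{-jn/2}$, and the number of such boundary-crossing indices at scale $j\in\nn_0$ is of order $2^{j(n-1)}$. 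Testing the norm on dyadic cubes $P$ of side $2^{-j_P}$ ($j_P\ge 0$) having a face on $\partial[0,1)^n$ leaves $\asymp 2^{(j-j_P)(n-1)}$ relevant indices at scale $j\ge j_P$; setting $k:=j-j_P$ reorganizes the inner expression as
\begin{equation*}
2^{j_P(s-n(1/p-\tau))}\left\{\sum_{k\ge 0}2^{k(s-1/p)q}\right\}^{1/q}.
\end{equation*}
Convergence of the inner geometric sum (with the $\sup$-modification when $q=\infty$) forces $s<1/p$ or $(s=1/p$ and $q=\infty)$; uniform boundedness of the outer prefactor as $j_P\to\infty$ forces $s\le n(1/p-\tau)$. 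Together these yield \eqref{eq-001}--\eqref{eq-002} as necessary conditions. For sufficiency, the same computation extended to all dyadic cubes $P$ (large cubes with $j_P\le 0$ benefit from $|P|^\tau\ge 1$; cubes disjoint from $\partial[0,1)^n$ contribute zero) gives a finite supremum.

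Part (iii) follows from the same machinery. Integer translations preserve $\mathcal{Q}$, so $\|\mathcal{X}_{0,m}\|_{B^{s,\tau}_{p,q}(\rn)}=\|\mathcal{X}\|_{B^{s,\tau}_{p,q}(\rn)}$. The two-scale relations $h_{i,j,m}=2^{-n/2}\sum_{k\in\{0,1\}^n}\epsilon_k^{(i)}\mathcal{X}_{j+1,2m+k}$, together with their Hadamard-type inverses expressing each $\mathcal{X}_{j+1,m'}$ in terms of $\mathcal{X}_{j,\lfloor m'/2\rfloor}$ and the $h_{i,j,\lfloor m'/2\rfloor}$, reduce everything to proving $\mathcal{X}_{j,m}\in B^{s,\tau}_{p,q}(\rn)\Leftrightarrow\mathcal{X}\in B^{s,\tau}_{p,q}(\rn)$. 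This is immediate by repeating the coefficient analysis of (ii) for $\mathcal{X}_{j,m}$, whose wavelet coefficients exhibit the same $(n-1)$-dimensional boundary concentration on its (rescaled and translated) cube of support, producing the same thresholds. The main obstacle I anticipate is the careful bookkeeping of the coefficient bound and the boundary count, especially contributions from lower-dimensional strata (edges and corners), which involve only $O(2^{j(n-k)})$ indices for codimension $k\ge 2$ and must be verified to remain subordinate to the leading $O(2^{j(n-1)})$ count.
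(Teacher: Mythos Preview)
Your arguments for part~(i) and for the \emph{sufficiency} direction of part~(ii) match the paper's proof: the identification $B^{s,\tau}_{p,q}(\rn)=B^{s+n(\tau-1/p)}_{\infty,\infty}(\rn)$ for $\tau>1/p$, and the wavelet computation using the upper bound $|\langle\mathcal{X},\psi_{i,j,m}\rangle|\lesssim 2^{-jn/2}$ together with the boundary count $\asymp 2^{j(n-1)}$, are exactly what the paper does.

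The genuine gap is in the \emph{necessity} direction of~(ii). Your displayed expression
\[
2^{j_P(s-n(1/p-\tau))}\Bigl\{\sum_{k\ge 0}2^{k(s-1/p)q}\Bigr\}^{1/q}
\]
is obtained by \emph{inserting} the size $2^{-jn/2}$ for every boundary-crossing coefficient; but you have only established the inequality $|\langle\mathcal{X},\psi_{i,j,m}\rangle|\lesssim 2^{-jn/2}$, not that a definite proportion of these $\asymp 2^{j(n-1)}$ coefficients actually satisfy $|\langle\mathcal{X},\psi_{i,j,m}\rangle|\gtrsim 2^{-jn/2}$. Without such a lower bound the divergence of the series (or the blow-up of the prefactor) tells you nothing about $\|\mathcal{X}\|_{B^{s,\tau}_{p,q}(\rn)}$. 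The paper explicitly flags this difficulty (``we do not know how many scalar products satisfy $|\langle\mathcal{X},\psi_{i,j,k}\rangle|\ge c\,2^{-jn/2}$'') and abandons the wavelet route for necessity. Instead it argues in two pieces: the condition $s\le n(1/p-\tau)$ comes from Proposition~\ref{grundp}(i), since otherwise $B^{s,\tau}_{p,q}(\rn)\hookrightarrow C_{ub}(\rn)$ and $\mathcal{X}$ is discontinuous; the exclusion of $s=1/p$ with $q<\infty$ is obtained from the difference characterization (Proposition~\ref{t4.7}) when $p>(n-1)/n$, and then extended to $p\le(n-1)/n$ by the Gustavsson--Peetre $\pm$-interpolation (Theorem~\ref{COMI}).

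For part~(iii) your two-scale/Hadamard reduction is a different (and pleasant) route compared with the paper's use of dilation boundedness and smooth pointwise multipliers; however, your final step ``repeating the coefficient analysis of~(ii) for $\mathcal{X}_{j,m}$'' inherits the same missing lower bound and therefore does not yet prove the necessity direction.
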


\begin{remark}\label{rem1}
\begin{enumerate}
\item[(i)] The most interesting thing of Theorem \ref{charact} is the influence of the Morrey parameter $\tau$.
Locally, the smoothness of  elements of $\bt$ grows with $\tau$.
If $\tau $ is large, discontinuous functions can not belong to $\bt$.
What concerns the smoothness of $\cx$, the Morrey parameter comes into play for $\tau \ge \frac{n-1}{np}$.

\item[(ii)] Measuring the regularity of characteristic functions $\cx_\Omega$ of sets $\Omega \subset \rn$ in Besov spaces
has attracted some attention in recent decades (see, for instance, \cite{FR,Gu1,Gu2,RS,s99,s99b,t02,t03,t06}).
These investigations have found some applications in various areas such as  
pointwise multipliers for Besov and Triebel--Lizorkin
spaces (see \cite{Gu1,Gu2,RS,t06}) and the Calder\'on inverse problem (see \cite{FR}).
For a general set $\Omega$, the smoothness of $\cx_\Omega$ depends on the quality of the
boundary. It turns out that the interrelations of smoothness and quality of the
boundary is surprisingly complicated. We shall return to this problem in our forthcoming article \cite{ysy18}.
\end{enumerate}
\end{remark}

Next we consider the mapping
$f\mapsto\langle f, \cx \rangle$ and discuss under which restrictions it extends to a  continuous linear functional on $\bt$.
It seems to be appropriate to distinguish into the cases $p\in[1, \infty]$ and $p\in(0, 1)$.

\begin{theorem}\label{limit5}
Let $s\in\rr$, $p  \in[1,\infty]$, $q\in(0,\fz]$
and $\tau \in [0,\fz)$.
The mapping  $f \mapsto \langle f , \cx  \rangle$  extends from
$B^{s, \tau}_{p,q} (\rn) \cap  L^1 (\rn)$
to a continuous linear functional on $B^{s, \tau}_{p,q} (\rn)$
if and only if either
$$
s= \frac 1p-1\, , \ \ \tau\in\lf[0,\frac{n-1}{np} \right]\ \ \mbox{and}\ \ q\in(0,1]
$$
or
$$s>\frac 1p-1,\ \  \tau\in\lf[0,\frac{n-1}{np}\right]\ \ \mbox{and}\ \ q\in(0,\fz]
$$
or
$$s>\frac np-n\tau-1,\ \ \tau\in  \lf(\frac{n-1}{np},\fz\right)\ \ \mbox{and}\ \ q\in(0,\fz].$$
\end{theorem}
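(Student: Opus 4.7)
The core of the argument is the equivalence, via the Hahn--Banach theorem, of the desired extension property with the \emph{a priori} estimate
\[
\bigl|\langle f,\cx\rangle\bigr|=\lf|\int_{[0,1)^n}f(x)\,dx\right|\lesssim \|f\|_{B^{s,\tau}_{p,q}(\rn)}
\]
valid for every $f\in B^{s,\tau}_{p,q}(\rn)\cap L^1(\rn)$. Once this is established, one extends to all of $B^{s,\tau}_{p,q}(\rn)$ in the standard fashion. I would approach both directions through the sufficiently smooth Daubechies wavelet characterization recalled in Section \ref{s3}. Writing any $f\in B^{s,\tau}_{p,q}(\rn)$ as $f=\sum_\lambda c_\lambda\psi_\lambda$ with $\lambda=(j,k,i)$, $\supp\psi_\lambda$ of size $\asymp 2^{-j}$ around $2^{-j}k$ and arbitrarily many vanishing moments, the inner product telescopes to $\langle f,\cx\rangle=\sum_\lambda c_\lambda\langle\psi_\lambda,\cx\rangle$. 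Only wavelets whose support straddles $\partial[0,1)^n$ (the \emph{boundary wavelets}) contribute: interior and exterior wavelets annihilate $\cx$ by vanishing moments. At each scale $j$ there are $\asymp 2^{j(n-1)}$ boundary wavelets, each satisfying $|\langle\psi_\lambda,\cx\rangle|\lesssim 2^{-jn/2}$ from $\|\psi_\lambda\|_{L^1(\rn)}\asymp 2^{-jn/2}$.

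For sufficiency I would run two complementary H\"older-type estimates on the equivalent sequence-space norm. The \emph{large-cube route} takes $P=[-1,1]^n$ in the Besov-type norm, so that $b_j:=2^{j(s+n/2-n/p)}(\sum_{k\in B_j}|c_{j,k}|^p)^{1/p}$ lies in $\ell^q$ with norm $\lesssim\|f\|_{B^{s,\tau}_{p,q}}$; a discrete H\"older inequality in the $\asymp 2^{j(n-1)}$ boundary indices $k$ and then in $j$ yields
\[
\sum_{j\ge 0}\sum_{k\in B_j}2^{-jn/2}|c_{j,k}|\lesssim \sum_{j\ge 0}2^{j(1/p-1-s)}b_j\lesssim \bigl\|\{2^{j(1/p-1-s)}\}_j\bigr\|_{\ell^{q'}}\,\|f\|_{B^{s,\tau}_{p,q}},
\]
which is finite exactly under the conditions of cases~(1) and~(2), the endpoint $s=1/p-1$ forcing $q\le 1$ in the familiar way (via $\ell^{q'}$-duality, or the embedding $\ell^q\hookrightarrow\ell^1$ when $q\le 1$). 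The \emph{small-cube route} instead takes $P=Q_{j,k}$ for each boundary cube, producing the pointwise bound $|c_{j,k}|\lesssim 2^{-j(s+n/2-n/p+n\tau)}\|f\|_{B^{s,\tau}_{p,q}}$; summing over the $\asymp 2^{j(n-1)}$ boundary indices and scales yields a geometric series convergent iff $s>n/p-n\tau-1$, which is case~(3). The switch between the two optimal choices of $P$ occurs exactly at $\tau=(n-1)/(np)$.

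For necessity I would use the explicit test family
\[
g_j(x):=\sum_{m'\in A_j}\phi\bigl(2^j(x-2^{-j}m')\bigr),
\]
with $\phi\in C_c^\infty(\rn)$ a fixed nonnegative bump and $A_j\subset\zz^n$ an $(n-1)$-dimensional lattice of $\asymp 2^{j(n-1)}$ points arranged so that each bump straddles one face of $[0,1)^n$ in equal halves. A direct computation yields $|\langle g_j,\cx\rangle|\asymp 2^{-j}$, while a sup-over-$P$ analysis of the Besov-type norm, splitting according to which dyadic side length $2^{-k}$ with $0\le k\le j$ is optimal, produces
\[
\|g_j\|_{B^{s,\tau}_{p,q}(\rn)}\asymp\max\bigl(2^{j(s-1/p)},\,2^{j(s-n/p+n\tau)}\bigr),
\]
the first entry dominating for $\tau\le(n-1)/(np)$ and the second otherwise. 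Letting $j\to\infty$ already rules out $s<1/p-1$ in the first regime and $s<n/p-n\tau-1$ in the second. To capture the endpoint $q$-dependence of case~(1) and the strict inequality of case~(3), I superimpose a lacunary sum $G:=\sum_\nu\lambda_\nu g_{r_\nu}$ with $r_\nu\nearrow\infty$ sufficiently fast; by Fourier disjointness and scale-localization of the optimal cubes $P_\nu^\ast$, one shows $\|G\|_{B^{s,\tau}_{p,q}}\asymp \|\{\lambda_\nu 2^{r_\nu(s-1/p)}\}_\nu\|_{\ell^q}$ in the first Morrey regime and $\asymp\sup_\nu\lambda_\nu 2^{r_\nu(s-n/p+n\tau)}$ in the second, whereas $|\langle G,\cx\rangle|=\sum_\nu\lambda_\nu 2^{-r_\nu}$. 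Optimising over $\{\lambda_\nu\}$ by duality then forces $q\le 1$ at the critical $s=1/p-1$ (via $\|\mu\|_{\ell^1}\le C\|\mu\|_{\ell^q}$ failing for $q>1$) and contradicts any endpoint $s=n/p-n\tau-1$ in case~(3) (via $\|\mu\|_{\ell^1}\le C\|\mu\|_{\ell^\infty}$ failing).

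The hardest step is the sharp computation of $\|G\|_{B^{s,\tau}_{p,q}}$ for the lacunary sum, because, in contrast to the classical Besov setting, the sup over dyadic cubes $P$ does not cleanly split over Littlewood--Paley pieces. Verifying that, in each of the two Morrey regimes, this sup is essentially realised within a single dyadic scale and a single face-neighbourhood (so that only one index $\nu$ effectively contributes at a time) is where the threshold $\tau=(n-1)/(np)$ enters in a visible way, and is the source of most of the technical work.
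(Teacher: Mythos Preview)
Your sufficiency argument is essentially the paper's: wavelet expansion, reduction to boundary wavelets, and the two H\"older routes (a large cube $P$ giving the $\ell^{q'}$-duality bound, and the single cube $P=Q_{j,k}$ giving the pointwise coefficient bound $|c_{j,k}|\lesssim 2^{-j(s+n/2-n/p+n\tau)}\|f\|$). The crossover at $\tau=(n-1)/(np)$ matches exactly. One cosmetic point: the paper does not invoke Hahn--Banach at all but \emph{defines} the extension by $\langle f,\cx\rangle:=\lim_{N}\langle S_Nf,\cx\rangle$ via the wavelet partial sums; this sidesteps nonseparability (recall $B^{s,\tau}_{p,q}$ is nonseparable for $\tau>0$) and the quasi-Banach issue for $q<1$, both of which make a density-plus-Hahn--Banach extension awkward. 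Since your actual computation already runs through the wavelet series, this is only a matter of framing.

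For necessity you take a genuinely different route. The paper does not use smooth bumps; it builds the counterexamples directly out of the admissible Daubechies wavelets,
\[
f_N:=\sum_{j=j_0}^{N}\lambda_j\sum_{k\in K_j}\psi_{1,j,k},
\]
with $K_j$ an $(n-1)$-dimensional array of boundary indices. Because the $\psi_{1,j,k}$ are orthonormal and the norm on $B^{s,\tau}_{p,q}(\rn)$ is, via Proposition~\ref{wav-type2}, a sequence-space norm in precisely these coefficients, the quantity $\|f_N\|_{B^{s,\tau}_{p,q}}$ is computed exactly (Lemma~\ref{limit3}) with no lacunarity device whatsoever. Likewise $\langle f_N,\cx\rangle$ is evaluated directly (Lemma~\ref{scalar1}); the one nontrivial input is that $\int_0^1\widetilde\psi(2^{j_0}t)\,dt\neq 0$ for some $j_0$, and this is deduced from Theorem~\ref{charact}. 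The endpoint cases are then dispatched by choosing $\{\lambda_j\}$ so that the relevant $\ell^1$ sum diverges while the $\ell^q$ norm stays bounded (regime $\tau\le(n-1)/(np)$, $q>1$), or by the explicit choice $\lambda_j=j\,2^{-j(n/2-1)}$ (regime $\tau>(n-1)/(np)$).

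Your bump-based construction is forced into the lacunary superposition exactly because a smooth bump is not frequency-localised: different $g_{r_\nu}$ leak across Littlewood--Paley blocks, and decoupling them requires the sparse subsequence and the ``only one $\nu$ effectively contributes'' analysis you flag as the hardest step. (Your claimed formula $\|G\|\asymp\sup_\nu\lambda_\nu 2^{r_\nu(s-n/p+n\tau)}$ in the second regime is also a slight oversimplification: the sup over $P$ still carries an inner $\ell^q$ tail $\sum_{\nu'\ge\nu}$, which only collapses to a single term under extra monotonicity of $\{\lambda_\nu\}$.) The paper's choice of wavelets as test-function building blocks eliminates all of this: orthogonality gives the scale decoupling for free. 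Since you already rely on Proposition~\ref{wav-type2} for sufficiency, you gain nothing by avoiding wavelets on the necessity side; replacing your $g_j$ by the $\psi_{1,j,k}$ would shorten the argument substantially.
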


\begin{remark}
\begin{enumerate}
\item[(i)]
In case of classical Besov spaces $B^s_{p,q}(\rn)$ there is a convenient
duality argument to deal with  the existence  $\langle f , \cx  \rangle$ (see \cite{t78}).
It is based on the relation
\[
 \lf(B^s_{p,q}(\rn)\right)' = B^{-s}_{p',q'}(\rn)\, , \qquad 1\le p,\,q<\infty\, , \quad s\in \rr\, .
\]
Let us mention that this formula does not extend to  the spaces $B^{s,\tau}_{p,q}(\rn)$, $\tau >0$.
\item[(ii)] Let again $\tau =0$.
As was mentioned before, Triebel \cite{t78} had shown that $f \mapsto \langle f , \cx  \rangle$  will not extend
to a continuous linear functional on $B^s_{p,q}(\rn)$ if $p  \in[1,\infty]$, $q\in(0,\fz]$ and  $s<\frac 1p -1$.
Kahane and Lemari{\'e}-Rieusset \cite[Part II, Chapt.~6,  Remark 2 on page 349]{KL} have supplemented this dealing with the
special limiting case $s=1/2$ and $p=q=2$.
\end{enumerate}
\end{remark}

The behavior for $p< 1$ is surprisingly different.
However, as in Theorems \ref{charact} and   \ref{limit5},  the value $\tau =  \frac{n-1}{np}$ still
plays a particular role.

\begin{theorem}\label{limit5b}
Let $s\in\rr$, $p  \in (0,1)$ and  $q\in(0,\fz]$.
\begin{enumerate}
\item[{\rm(i)}] Let $\tau\in  (\frac{n-1}{np},\fz)$.
Then  $f \mapsto \langle f , \cx  \rangle$  extends from  $B^{s, \tau}_{p,q} (\rn) \cap  L^1 (\rn)$
to a continuous linear functional on $B^{s, \tau}_{p,q} (\rn)$
if and only if $ s\in(n/p-n\tau-1,\fz)$.

\item[{\rm(ii)}] Let $\tau\in  [0, \frac{n-1}{np}]$.
If either
\[
s =  (1-\tau p)n\lf( \frac 1p -1\right) \qquad \mbox{and}\qquad q\in (0,p]\ \ \ (q\in(0,1]\ \ \mbox{when}\ \ \tau=0)
\]
or
\[
s>(1-\tau p)n\lf( \frac 1p -1\right)  \qquad \mbox{and}\qquad q\in (0,\infty],
\]
then the mapping  $f \mapsto \langle f , \cx  \rangle$  extends from  $B^{s, \tau}_{p,q} (\rn) \cap
 L^1 (\rn)$ to a continuous linear functional on $B^{s, \tau}_{p,q} (\rn)$.
If either
\[
 s = (1-\tau p)n\lf( \frac 1p -1\right) \qquad \mbox{and}\qquad  q \in (1, \infty]
\]
or
\[
  s <  (1-\tau p)n\lf( \frac 1p -1\right)
\]
then the mapping  $f \mapsto \langle f , \cx  \rangle$  does not extend from  $B^{s, \tau}_{p,q} (\rn) \cap L^1 (\rn)$
to a continuous linear functional on $B^{s, \tau}_{p,q} (\rn)$.
\end{enumerate}
\end{theorem}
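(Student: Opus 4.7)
The plan is to parallel the approach for Theorem~\ref{limit5}: reduce the question, via the smooth Daubechies wavelet characterization of $\bt$ recalled in Section~\ref{s3}, to a sequence-space bilinear estimate, and complement sufficiency with counterexamples built from boundary-concentrated wavelets. Writing $f=\sum_\lambda s_\lambda\psi_\lambda$ with $\|f\|_{\bt}\asymp\|\{s_\lambda\}\|_{\sbt}$, the functional becomes $\{s_\lambda\}\mapsto\sum_\lambda s_\lambda a_\lambda$ where $a_\lambda:=\langle\psi_\lambda,\cx\rangle$. By the vanishing moments of the Daubechies wavelets, $a_{j,m}=0$ unless $\supp\psi_{j,m}\cap\partial[0,1)^n\ne\emptyset$; at scale $j$ there are $O(2^{j(n-1)})$ such boundary wavelets, each with $|a_{j,m}|\lesssim 2^{-jn/2}$. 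The distinguished value $\tau=(n-1)/(np)$ is precisely where the Morrey weight $|P|^{-\tau}\sim 2^{j_Pn\tau}$ balances the boundary $\ell^p$-mass $2^{j(n-1)/p}$ accumulated at scale $j$, explaining the case split in the theorem.

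For the sufficient directions in Part~(i), when $\tau>1/p$ Remark~\ref{grund}(iv) identifies $\bt$ with the classical H\"older--Zygmund space $B^{s+n(\tau-1/p)}_{\infty,\infty}(\rn)$ and the claim reduces to the classical duality pairing under the condition $s+n(\tau-1/p)>-1$; when $\tau\in(\frac{n-1}{np},1/p]$ a direct sequence-level estimate suffices, since the optimal cube in the Morrey supremum for a single boundary wavelet is that wavelet's own support cube, and summation across scales by H\"older (for $q\ge 1$) or by the $p$-subadditivity $\|s\|_1\le\|s\|_p$ valid for $p\in(0,1)$ (for $q<1$) yields the bound exactly at $s>n/p-n\tau-1$. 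In Part~(ii), $\tau\in[0,\frac{n-1}{np}]$, the Morrey weight no longer dominates the boundary volume and the optimal Morrey-cube in the supremum moves from the wavelet's own support to the whole unit cube; the sharper threshold $s>(1-\tau p)n(1/p-1)$ is then obtained by combining a sharp sequence-level H\"older estimate with an embedding of $\bt$ into a classical Besov space $B^{\tilde s}_{\tilde p,q}(\rn)$ of effective exponent $1/\tilde p=1/p-\tau(1-p)$, so that the bound reduces to the classical threshold $\tilde s>n(1/\tilde p-1)$ for the pairing with $\cx$. The endpoint distinction $q\in(0,p]$ (sufficient) versus $q>p$ (insufficient) at the critical $s$ is then handled through a $(q/p)$-H\"older combined with the $p$-triangle inequality in the dyadic scale sum, the conjugate exponent being finite precisely when $q\le p$.

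For necessity, the baseline counterexample is a single boundary wavelet $f_j=c_j\psi_{j,m_j}$ normalized to $\|f_j\|_{\bt}\asymp 1$, which rules out $s<n/p-n-n\tau$ (matching Part~(i) but weaker than the threshold in Part~(ii)). The sharper exclusion $s<(1-\tau p)n(1/p-1)$ and the critical-$q$ obstructions require multi-scale superpositions $f=\sum_{j\in\Lambda}c_j\sum_{m\in F_j}\psi_{j,m}$, where $F_j$ is an $(n-1)$-dimensional boundary-face patch of tuned extent and $\{c_j,\Lambda\}$ are chosen geometrically (or logarithmically in the borderline $q$ cases). The main obstacle is the fine calibration at the critical values of $s$ and $q$: one must identify the intermediate cube attaining the Morrey supremum for each test configuration, match the resulting $\bt$-norm against the pairing $\sum_\lambda s_\lambda a_\lambda$, and ensure that no slack is left between the sufficient and necessary sides. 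The absence of a clean duality formula for $(\bt)'$ when $\tau>0$ rules out any classical $B^{-s}_{p',q'}$-based shortcut, forcing both directions to be executed at the sequence-space level throughout and making the matching at the $q=p$ endpoint (resp.\ $q=1$ when $\tau=0$) a particularly delicate bookkeeping task.
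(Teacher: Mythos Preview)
Your high-level plan (wavelet characterization, boundary-wavelet counting, multi-scale test functions) matches the paper, but there are two genuine gaps in the execution for Part~(ii).

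\textbf{Sufficiency at the critical smoothness.} The embedding you invoke, $\bt\hookrightarrow B^{\tilde s}_{\tilde p,q}(\rn)$ with $1/\tilde p=1/p-\tau(1-p)$, is not a known result; it is not clear that such an embedding holds, and neither a sequence-level H\"older step nor the $p$-triangle inequality produces it. The paper does \emph{not} argue this way. Instead, for $s=(1-\tau p)n(1/p-1)$ and $q\in(0,p]$ it quotes a result of Haroske--Moura--Skrzypczak (Theorem~3.8(i) in \cite{HMS16}) to the effect that $\bt\subset L^1_{\loc}(\rn)$ with a quantitative local $L^1$ bound; this immediately gives $|\langle f,\cx\rangle|\lesssim\|f\|_{\bt}$ for compactly supported $f$, and one reduces to compact support via smooth pointwise multipliers. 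Without this external input (or a genuine substitute), the sufficiency at the endpoint is unproved in your sketch. Note also that you write the dichotomy as $q\le p$ versus $q>p$, but the theorem only asserts failure for $q>1$; the range $q\in(p,1]$ with $\tau>0$ is explicitly left open, so your claimed ``$(q/p)$-H\"older'' argument cannot be as sharp as you suggest.

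\textbf{Necessity for $\tau\in(0,\frac{n-1}{np})$.} A ``boundary-face patch of tuned extent'' is not enough. If $F_j$ is a solid $(n-1)$-dimensional patch, the resulting $\bt$-norm is governed by Lemma~\ref{limit3}, which yields only the weaker threshold $s=1/p-1$ in this $\tau$-range. To reach the correct critical value $s=(1-\tau p)n(1/p-1)$ the paper builds, for $\alpha:=np\tau\in(0,n-1)$, a sparse index set $A_j^{n-1}\subset[0,1)^{n-1}$ of cardinality $\lfloor 2^{j\alpha}\rfloor$ with the Ahlfors-type property
\[
\big|\{m\in A_j^{n-1}:\ Q_{j,m}\subset Q_{J,0}\}\big|\asymp 2^{(j-J)\alpha},\qquad 0\le J\le j,
\]
and uses $g_N:=\sum_{j}\lambda_j\sum_{m\in T_j}\psi_{1,j,m}$ with $T_j$ built from $A_j^{n-1}$. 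This self-similar distribution is exactly what makes the Morrey supremum in $\|g_N\|_{\bt}$ scale-independent (Lemma~\ref{limit6}), producing the formula $\|g_N\|_{\bt}\asymp(\sum_j 2^{j(s+n/2+n\tau-n/p)q}|\lambda_j|^q)^{1/q}$ needed to match the pairing $\langle g_N,\cx\rangle\asymp\sum_j\lambda_j 2^{j(\alpha-n/2)}$. This fractal construction is the missing idea in your necessity argument.
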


\begin{remark}
Summarizing, the only case, which has been left open by Theorems \ref{limit5} and \ref{limit5b}, is given by
\[
p\in(0, 1), \quad s =  (1-\tau p)n\lf( \frac 1p -1\right), \quad \tau\in\lf(0,\frac{n-1}{np}\right]  \quad \mbox{and}\quad q\in (p,1].
\]
Clearly, all regions, showing up in the restrictions, are convex, and the dependence on the
parameters is continuous, there exist no jumps.
\end{remark}

For a moment we turn back to the classical situation, namely, we choose $\tau =0$.
Then $B^{s,0}_{p,q} (\rn) = B^{s}_{p,q} (\rn)$ and we obtain, essentially  as a corollary of
Theorems \ref{limit5} and \ref{limit5b}, the following final result.

\begin{corollary}\label{klassisch}
Let $s \in \rr$ and $q  \in (0,\infty]$.
\begin{enumerate}
\item[{\rm(i)}] Let $p\in[1, \infty]$.
Then the mapping  $f \mapsto \langle f , \cx  \rangle$  extends from  $B^{s}_{p,q} (\rn) \cap L^1 (\rn)$
to a continuous linear functional on $B^{s}_{p,q} (\rn)$
if and only if either
$$
s= \frac 1p-1 \qquad \mbox{and}\quad \ q\in(0,1]
$$
or
$$s>\frac 1p-1\qquad  \mbox{and}\quad  \ q\in(0,\fz]\, .$$

\item[{\rm(ii)}]
Let $p  \in (0,1)$.
Then the mapping  $f \mapsto \langle f , \cx  \rangle$  extends from  $B^{s}_{p,q} (\rn) \cap  L^1 (\rn)$
to a continuous linear functional on $B^{s}_{p,q} (\rn)$
if and only if either
\[
s =   n\lf(\frac 1p -1\right)  \qquad \mbox{and}\qquad q\in (0,1]
\]
or
\[
s> n\lf(\frac 1p -1\right) \qquad \mbox{and}\qquad q\in (0,\infty].
\]
\end{enumerate}
\end{corollary}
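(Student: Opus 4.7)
\noindent\emph{Proof proposal.}
The plan is to deduce Corollary~\ref{klassisch} directly from Theorems~\ref{limit5} and \ref{limit5b} by setting $\tau=0$, using the identification $B^{s,0}_{p,q}(\rn)=B^s_{p,q}(\rn)$ recorded in Remark~\ref{grund}(ii). No additional Besov-type machinery is required; the whole argument reduces to substituting $\tau=0$ into the trichotomies already proved and verifying that the resulting conditions collapse to those in the corollary.

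For part~(i), I would invoke Theorem~\ref{limit5}. Since $\frac{n-1}{np}\ge 0$ for every admissible $p\in[1,\infty]$, the value $\tau=0$ always lies in $\lf[0,\frac{n-1}{np}\right]$, so the third case of Theorem~\ref{limit5} (which requires $\tau>\frac{n-1}{np}$) is vacuous. The first two cases, with $\tau=0$, read respectively ``$s=1/p-1$ and $q\in(0,1]$'' and ``$s>1/p-1$ and $q\in(0,\infty]$''. These are exactly the two clauses in~(i); both directions are inherited from the ``if and only if'' in Theorem~\ref{limit5}.

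For part~(ii), I would apply Theorem~\ref{limit5b}(ii), again with $\tau=0\in\lf[0,\frac{n-1}{np}\right]$. The critical smoothness $(1-\tau p)n(1/p-1)$ simplifies to $n(1/p-1)$. The parenthetical in Theorem~\ref{limit5b}(ii) explicitly stipulates that when $\tau=0$ the admissible range of $q$ at the critical smoothness is $(0,1]$ rather than $(0,p]$, and this matches the threshold in~(ii). The supercritical region $s>n(1/p-1)$ is allowed for every $q\in(0,\infty]$, while the negative half of Theorem~\ref{limit5b}(ii) excludes both $s<n(1/p-1)$ and the endpoint $s=n(1/p-1)$ with $q\in(1,\infty]$, which supplies the ``only if'' direction.

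There is essentially no substantive mathematical obstacle; the only real hazard is bookkeeping at the $q$-endpoints. One must check carefully that the parenthetical correction at $\tau=0$ in Theorem~\ref{limit5b}(ii) aligns with the $\ell^1$-summability threshold visible in part~(i), so that both parts of the corollary share a uniform ``$q\in(0,1]$ on the critical line'' flavour and no hidden discontinuity remains in passing from $p\ge 1$ to $p<1$. Once this verification is in hand the corollary follows at once.
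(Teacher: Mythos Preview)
Your proposal is correct and follows essentially the same route as the paper: both arguments reduce to setting $\tau=0$ in Theorems~\ref{limit5} and~\ref{limit5b} and reading off the resulting conditions. The only cosmetic difference is that for the ``only if'' direction in part~(ii) the paper points explicitly to Substep~2.5 of the proof (the test family $h_N$ built for $\tau=0$), whereas you cite the already-stated negative half of Theorem~\ref{limit5b}(ii); since Substep~2.5 is precisely what establishes that half of the theorem for $\tau=0$, the two are logically identical.
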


Finally we turn to the mappings $f \mapsto \langle f , \cx_{0,m}  \rangle$ and
$f \mapsto \langle f , h_{i,j,m}  \rangle$.

\begin{theorem}\label{general}
Theorems \ref{limit5},  \ref{limit5b} and Corollary \ref{klassisch} remain true when replacing
$f \mapsto \langle f , \cx  \rangle$ by either $f \mapsto \langle f , \cx_{0,m}  \rangle$, $m \in \zz^n$
or by $f \mapsto \langle f , h_{i,j,m}  \rangle$, $i \in \{ 1, \ldots, 2^n-1\}, ~ j \in \nn_0, ~m\in \zz^n$.
\end{theorem}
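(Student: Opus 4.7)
The plan is to deduce Theorem \ref{general} from Theorems \ref{limit5}, \ref{limit5b} and Corollary \ref{klassisch} by reducing the functionals $f\mapsto \langle f, \cx_{0,m}\rangle$ and $f\mapsto \langle f, h_{i,j,m}\rangle$ to $f\mapsto \langle f,\cx\rangle$ through translation and dilation on $\bt$. Two preliminary invariance properties of the quasi-norm $\|\cdot\|_{\bt}$ will be needed. First, integer translation is an isometry: for any $y\in\zz^n$ the shifted dyadic cube $P-y$ is again a dyadic cube of the same side length, hence the supremum in Definition \ref{d1} is invariant after the substitution $P\mapsto P-y$. Second, for each fixed $l\in\zz$, the dilation $f\mapsto f(2^l\cdot)$ is bounded on $\bt$ with operator norm depending only on $(s,\tau,p,q,n,l)$; this follows from a change of variables in the inner integral, while the truncation $\max\{j_P,0\}$ contributes only finitely many boundary terms that are controlled in terms of $\|f\|_{\bt}$.

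For the sufficient direction, assume the conditions of Theorem \ref{limit5}, Theorem \ref{limit5b} or Corollary \ref{klassisch} (whichever applies) hold, so that $f\mapsto \langle f,\cx\rangle$ extends continuously to $\bt$. For $\cx_{0,m}=\cx(\cdot-m)$ with $m\in\zz^n$, integer translation gives $\langle f,\cx_{0,m}\rangle=\langle f(\cdot+m),\cx\rangle$ and $\|f(\cdot+m)\|_{\bt}=\|f\|_{\bt}$, so the extension transports. For $h_{i,j,m}$, I would use the identities $\widetilde{h}(t)=\widetilde\cx(2t)-\widetilde\cx(2t-1)$ and $\widetilde\cx(t)=\widetilde\cx(2t)+\widetilde\cx(2t-1)$ to write $h_i=\sum_{k\in\{0,1\}^n}\sigma_{i,k}\,\cx(2\cdot-k)$ with signs $\sigma_{i,k}\in\{-1,+1\}$, so that
\[
 h_{i,j,m}=2^{jn/2}\sum_{k\in\{0,1\}^n}\sigma_{i,k}\,\cx\bigl(2^{j+1}(\cdot)-2m-k\bigr).
\]
A change of variables rewrites each $\langle f,\cx(2^{j+1}(\cdot)-2m-k)\rangle$ as a multiple of $\langle f(2^{-j-1}(\cdot)+2^{-j-1}(2m+k)),\cx\rangle$; the two invariance properties together with the hypothesis on $\cx$ then yield the desired continuity bound.

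For the converse direction, I plan to transport the counterexamples driving the non-extension parts of Theorems \ref{limit5} and \ref{limit5b}. Those proofs should produce, in each excluded parameter regime, a sequence $\{f_N\}_{N}\subset\bt\cap L^1(\rn)$ with $\|f_N\|_{\bt}\le 1$ but $|\langle f_N,\cx\rangle|\to\infty$. After rescaling via dilation by $2^{j+1}$ and translating by an integer, I may assume each $f_N$ is supported in a dyadic sub-cube $Q$ of side length $2^{-j-1}$ contained in the support of $h_{i,j,m}$ and on which $h_{i,j,m}$ takes a constant value $\sigma\cdot 2^{jn/2}$ with $\sigma\in\{\pm 1\}$. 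Then $\langle f_N,h_{i,j,m}\rangle=\sigma\cdot 2^{jn/2}\langle f_N,\cx\rangle$, which is unbounded on the unit ball of $\bt$; the $\cx_{0,m}$ case is handled purely by integer translation.

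The main obstacle I anticipate is the dilation estimate: the inhomogeneous truncation $\max\{j_P,0\}$ in Definition \ref{d1} requires careful bookkeeping of boundary terms under rescaling, and one must verify that these are controlled by $\|f\|_{\bt}$ (rather than by a homogeneous analogue). A secondary concern is the transport of counterexamples: I must check that the constructions in Sections \ref{s4} and \ref{s5} produce $f_N$ that can be, or are already, compactly supported in a small fixed dyadic cube before rescaling, so that the shifted supports land inside the piecewise-constant pieces of $h_{i,j,m}$.
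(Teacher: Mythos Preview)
Your sufficiency argument via translation and dyadic dilation is fine and is a clean alternative to the paper's route (which simply re-runs the estimates of Theorems \ref{limit2} and \ref{limit2-} with $\cx$ replaced by $h_{i,j,m}$). The decomposition $h_i=\sum_{k\in\{0,1\}^n}\sigma_{i,k}\cx(2\cdot-k)$ together with the boundedness of $f\mapsto f(2^{\pm 1}\cdot)$ and the isometry of integer shifts gives the continuity bound.

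The gap is in your necessity argument. The test families $f_N$, $g_N$, $h_N$ constructed in Section~\ref{s5.2} are finite linear combinations of wavelets $\psi_{1,\ell,k}$ and therefore satisfy $\int_{\rn} f_N=0$ by \eqref{moment}. The quantity $\langle f_N,\cx\rangle=\int_{[0,1]^n}f_N$ is nonzero \emph{only} because the supports of the chosen $\psi_{1,\ell,k}$ straddle the face $\{x_1=0\}$ of $\partial[0,1]^n$ (note $\supp\widetilde\psi\subset[K,L]$ with $K<0$). If you dilate and translate so that the transformed $f_N$ lies entirely inside a dyadic cube $Q$ on which $h_{i,j,m}$ is constant and $Q\subset\supp h_{i,j,m}$, then $\langle f_N,h_{i,j,m}\rangle=\sigma\,2^{jn/2}\int_{\rn}f_N=0$, and your ``unbounded'' conclusion collapses. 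The same happens for $\langle f_N,\cx\rangle$: once $\supp f_N\subset[0,1]^n$, this equals $\int_{\rn} f_N=0$, so the transformed sequence is no longer a counterexample for $\cx$ either.

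What is actually needed (and what the paper does in Section~\ref{s5.5}) is to place the rescaled test functions so that the wavelet supports straddle a jump of $h_{i,j,m}$---for $h_{1,j,0}$, the face $\{x_1=0\}$ of $Q_{j,0}$---in the same way the original $f_N$ straddles $\{x_1=0\}$ for $\cx$. Concretely, the paper keeps $k_1=0$ but shrinks the range of $k_2,\ldots,k_n$ so that the supports in those coordinates stay inside $[0,2^{-j-1}]$; then $h_{1,j,0}$ acts like $2^{jn/2}\cx_{\{x_1\ge 0\}}$ on $\supp\widetilde f_N$, and the computation of Lemma~\ref{scalar1} goes through unchanged. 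Your dilation idea can be salvaged: take $\widetilde f_N:=f_N(2^{j+1}\cdot)$ (no further translation), observe that its support straddles $\{x_1=0\}$ and lies in $\{0\le x_i<2^{-j-1},\ i\ge 2\}$, and check directly that $\langle\widetilde f_N,h_{1,j,0}\rangle=2^{jn/2-(j+1)n}\langle f_N,\cx\rangle$. But you must abandon the hypothesis ``supported in a sub-cube on which $h_{i,j,m}$ is constant.''
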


\begin{remark} \begin{itemize}
\item[(a)] Oswald \cite{Os79,Os81,Os18} discussed the properties of the Haar system in limiting cases with $p\in (0,1)$.
He is working on $[0,1]^d$ instead of $\rn$.
In his recent paper \cite{Os18} Oswald proved the following:
If $p\in(\frac{d}{d+1},1)$, $s=d(\frac 1p -1)$ and $q\in (p, \infty)$ then it holds:
\begin{itemize}
 \item[(i)] If $q\in (1,\infty)$, then the coefficient functionals of the Haar expansion can not be extended to bounded linear functionals on
 $B^s_{p,q}([0,1]^d)$.
 \item[(ii)]
 If $q\in (p, 1]$, then the partial sum operators of the Haar expansion are not uniformly bounded on $B^s_{p,q}([0,1]^d)$.
\end{itemize}
Clearly, (i) is the local counterpart of Corollary \ref{klassisch}(ii).
As  mentioned in the Acknowledgement in \cite{Os18},
the counterexamples used to prove (ii) were
communicated to Oswald by Ullrich and they are also published in \cite{GSU19}.

In addition, Oswald was able to show that $H$ restricted to
$[0,1]^d$ is a Schauder basis for $B^s_{p,q}([0,1]^d)$
if $p\in (\frac{d}{d+1}, 1)$, $s=d(\frac 1p -1)$ and $q\in (0, p]$
(this result was also independently obtained by  Garrig\'os, Seeger and Ullrich in \cite{GSU19}).

\item[(b)] Recently, Garrig\'os, Seeger and Ullrich \cite{GSU19}
settled all (!) the borderline cases  of the  Schauder basis properties for the Haar system in $B^s_{p,q}(\rn)$.
\end{itemize}
\end{remark}

%&&&&&&&&&&&&&&&&&&&&&&&&&&&&&&&&&&&&&&&&&&
%&&&&&&&&&&&&&&&&&&&&&&&&&&&&&&&&&&&&&&&&&&

\section{Besov-type spaces\label{s3}}

%&&&&&&&&&&&&&&&&&&&&&&&&&&&&&&&&&&&&&&&&&&
%&&&&&&&&&&&&&&&&&&&&&&&&&&&&&&&&&&&&&&&&&&
In this section, we recall the  characterizations of
$B^{s,\tau}_{p,q}(\mathbb{R}^n)$ in terms of sufficiently smooth Daubechies wavelets and differences, as well as their interpolation property,
which will be used in our proofs of Theorems \ref{charact}, \ref{limit5} and \ref{limit5b} below.

%&&&&&&&&&&&&&&&&&&&&&&&&&&&&&&&&&&&&&&&&&&&&&&&&&&&&&&&&&&&&&&&&&&&&&&&&&&&&&&&&&&&&&
%&&&&&&&&&&&&&&&&&&&&&&&&&&&&&&&&&&&&&&&&&&&&&&&&&&&&&&&&&&&&&&&&&&&&&&&&&&&&&&&&&&&&&

\subsection{Characterization by wavelets}

%&&&&&&&&&&&&&&&&&&&&&&&&&&&&&&&&&&&&&&&&&&&&&&&&&&&&&&&&&&&&&&&&&&&&&&&&&&&&&&&&&&&&&
%&&&&&&&&&&&&&&&&&&&&&&&&&&&&&&&&&&&&&&&&&&&&&&&&&&&&&&&&&&&&&&&&&&&&&&&&&&&&&&&&&&&&&

Wavelet bases in the Besov and Triebel--Lizorkin spaces are a well
developed concept (see, for instance, Meyer \cite{me},
Wojtasczyk \cite{woj} and Triebel \cite{t06,t08}). Let $\wz{\phi}$ be
an orthonormal scaling function on $\rr$ with compact support
and of sufficiently high regularity. Let $\wz{\psi}$ be one
{\it corresponding orthonormal wavelet}.
Then the tensor product ansatz yields a scaling function $\phi$ and
associated wavelets
$\{\psi_1,\ldots,\psi_{2^n-1}\}$, all defined now on $\rn$ (see, for instance,
\cite[Proposition 5.2]{woj}).
We suppose
\begin{align}\label{4.19}
\phi\in C^{N_1}(\rn)\hs\mathrm{and}\hs
\supp\phi\subset[-N_2,\,N_2]^n
\end{align}
for certain natural numbers $N_1$ and $N_2$. This implies
\begin{align}\label{4.20}
\psi_i\in C^{N_1}(\rn)\hs\mathrm{and}\hs
\supp\psi_i\subset[-N_3,\,N_3]^n,\hs\forall\, i\in\{1,\ldots,2^n-1\}
\end{align}
for some $N_3 \in \nn$.
For any $k\in\zz^n$, $j\in\nn_0$ and $i\in\{1,\ldots,2^n-1\}$, we shall use the
standard abbreviations in this article:
\begin{equation}\label{3.4x}
\phi_{j,k}(x):= 2^{jn/2}\phi(2^jx-k)\hs \quad \mathrm{and} \quad \hs
\psi_{i,j,k}(x):= 2^{jn/2}\psi_i(2^jx-k),\hs \forall\, x\in\rn.
\end{equation}
Furthermore, it is well
known that
\begin{align}\label{moment}
\int_\rn \psi_{i,j,k}(x)\, x^\gz\,dx = 0 \qquad  \mbox{if}
\qquad  |\gz|\le N_1
\end{align}
(see \cite[Proposition 3.1]{woj}) and
\begin{align}\label{4.21}
\{\phi_{0,k}: \ k\in\zz^n\}\: \bigcup \: \{\psi_{i,j,k}:\ k\in\zz^n,\
j\in\nn_0,\ i\in\{1,\ldots,2^n-1\}\}
\end{align}
yields an {\it orthonormal basis} of $L^2(\rn)$; see \cite[Section
3.9]{me} or \cite[Section 3.1]{t06}.
Thus, for any $f\in L^2(\rn)$,
\begin{align}\label{wavelet}
f=\sum_{k\in\zz^n}\, \lambda_k \, \phi_{0,k}+\sum_{i=1}^{2^n-1} \sum_{j=0}^\infty
\sum_{k\in\zz^n}\, \lambda_{i,j,k}\, \psi_{i,j,k}
\end{align}
converges in $L^2(\rn)$,
where  $\lambda_k := \langle f,\,\phi_{0,k}\rangle$ and
$\lambda_{i,j,k}:= \langle f,\,\psi_{i,j,k}\rangle$ with $\langle\cdot,\cdot\rangle$
denoting the inner product of $L^2(\rn)$.
For brevity we put
\begin{align}\label{koeff}
\lambda (f) := \{\lambda_k\}_{k} \cup \{\lambda_{i,j,k}\}_{i,j,k} \, .
\end{align}
By means of such a wavelet system one can discretize the quasi-norm $\|\cdot\|_{\bt}$.
Therefore we need some sequence spaces (see \cite[Definition 2.2]{ysy}).

\begin{definition}\label{dts}
Let $s\in \rr$, $\tau\in[0,\fz)$ and $p,$ $q\in(0,\fz]$.
The sequence space
$\sbt$ is defined to be the space of all
 sequences $t:=\{t_{i,j,m}\}_{i\in\{1\ldots,2^n-1\},j\in\nn_0, m\in\zz^n}\subset \cc$ such that $\|t\|_{\sbt}<\fz$, where
$$\|t\|_{\sbt}:=
\sup_{P\in\mathcal{Q}}\frac1{|P|^{\tau}}\left\{\sum_{j=\max\{j_P,0\}}^\fz
2^{j(s+\frac n2-\frac np)q} \sum_{i=1}^{2^n-1}
\left[\sum_{\{m:\ Q_{j,m}\subset P\}}
|t_{i,j,m}|^p\right]^{\frac qp}\right\}^{\frac 1q} \, .$$
\end{definition}

As a special case of  \cite[Theorem 4.12]{lsuyy} (see also \cite{lsuyy1}), we have the following
wavelet characterization.

\begin{proposition}\label{wav-type2}
Let $s\in\rr$, $\tau\in[0,\fz)$ and $p,\,q\in(0,\fz]$.
Let $N_1\in\nn_0$ satisfy
\begin{align}
 \label{eq-03}
N_1+1&>\max\lf\{n+\frac np-n\tau-s, 2 \sigma_p + 2n+n\tau+1, \right.\\
&\qquad \qquad\qquad\left. n\lf(1+\frac1p+\frac 12\right), n+s, -s+\frac np\right\}.\nonumber
 \end{align}
Let $f\in\cs'(\rn)$. Then
$f\in \bt$ if and only if $f$ can be represented in $\cs'(\rn)$ as in \eqref{wavelet}
such that
\[
\|\lz(f)\|^*_{{b}^{s,\tau}_{p,q}(\rn)}:=
\sup_{P\in\mathcal{Q}}\frac1{|P|^{\tau}}
\left\{\sum_{m:\ Q_{0,m}\subset P}
| \langle f,\,\phi_{0,m}\rangle|^p\right\}^{\frac 1p} +
\|\{\langle f,\,\psi_{i,j,m}\rangle\}_{j,m}\|_{\sbt} <\infty\, .
\]
Moreover, $\|f\|_{\bt}$ is equivalent to
$\|\lz(f)\|^*_{{b}^{s,\tau}_{p,q}(\rn)}$ with the positive equivalence constants independent of $f$.
\end{proposition}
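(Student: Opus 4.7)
The plan is to derive the equivalence by specializing the general smooth atomic/molecular decomposition framework for $\bt$ from \cite{lsuyy}, so the task reduces to verifying that the regularity, support and moment data of the Daubechies system encoded by \eqref{4.19}, \eqref{4.20} and \eqref{moment} match the abstract hypotheses of that theorem, under the size condition \eqref{eq-03} on $N_1$.

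For the synthesis direction (``if''), I would assume the series in \eqref{wavelet} is given with $\|\lz(f)\|^\ast_{{b}^{s,\tau}_{p,q}(\rn)}<\fz$ and recognize $\phi_{0,k}$ as a smooth atom at scale $0$ and each $\psi_{i,j,k}$ as a smooth atom at scale $j$ in the sense of \cite{lsuyy}: \eqref{4.20} controls the support size; $\psi_i\in C^{N_1}(\rn)$ furnishes the derivative estimates; and \eqref{moment} provides the vanishing moments at the Daubechies level. The lower bound $N_1+1>n+\frac{n}{p}-n\tau-s$ ensures enough vanishing moments for the molecular theory in the Morrey regime, while $N_1+1>n+s$ and $N_1+1>-s+n/p$ yield the regularity and decay needed for the dual-side estimates. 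Invoking the atomic synthesis half of \cite[Theorem 4.12]{lsuyy} then produces $f\in\bt$, with unconditional convergence of \eqref{wavelet} in $\cs'(\rn)$ and $\|f\|_{\bt}\lesssim \|\lz(f)\|^\ast_{{b}^{s,\tau}_{p,q}(\rn)}$.

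For the analysis direction (``only if''), given $f\in\bt$ I would use the compact support and $C^{N_1}$-regularity of $\phi$ and $\psi_i$ to define the coefficients $\lambda_k=\langle f,\phi_{0,k}\rangle$ and $\lambda_{i,j,k}=\langle f,\psi_{i,j,k}\rangle$ as dual pairings against elements of $\cs(\rn)$. The strategy is then to estimate $|\lambda_{i,j,k}|$ by the Peetre maximal function of $\vz_{j'}\ast f$ for $j'$ in a bounded neighborhood of $j$, exploiting \eqref{moment} to gain the right decay in $|j-j'|$. The terms $2\sigma_p+2n+n\tau+1$ and $n(1+1/p+1/2)$ in \eqref{eq-03} are precisely what is needed to make these Peetre-type maximal estimates admissible in the quasi-Banach range $p<1$ and in the presence of the Morrey weight. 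Summing over $\{m:Q_{j,m}\subset P\}$, then over $j\ge\max\{j_P,0\}$, and taking the supremum over dyadic $P$ yields $\|\lz(f)\|^\ast_{{b}^{s,\tau}_{p,q}(\rn)}\lesssim \|f\|_{\bt}$.

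The main obstacle will be propagating the Morrey supremum $\sup_P|P|^{-\tau}$ through the almost-diagonal estimates that link the Littlewood--Paley blocks $\vz_{j}\ast f$ to the wavelet coefficients. Unlike the classical $B^s_{p,q}(\rn)$ setting where a single global mixed-norm appears, one must here preserve the supremum of a weighted $\ell^p(\ell^q)$-norm restricted to cubes $Q\subset P$: almost-diagonal kernels transport mass from cubes inside $P$ to nearby cubes outside $P$, and the argument has to reabsorb that leakage into a supremum over a slightly dilated dyadic ancestor $P'\supset P$. This is the technical heart of \cite[Theorem 4.12]{lsuyy}, and the conditions on $N_1$ in \eqref{eq-03} are calibrated so that both the wavelet-to-Littlewood--Paley matrix and its transpose fall inside the almost-diagonal class for which boundedness on $\sbt$ is available.
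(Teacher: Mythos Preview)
Your proposal is correct and aligns with the paper's own treatment: the paper does not prove Proposition~\ref{wav-type2} directly but states it as a special case of \cite[Theorem~4.12]{lsuyy}, which is precisely the general framework you invoke. Your outline of the synthesis and analysis directions, and your identification of the almost-diagonal/Morrey-supremum compatibility as the technical core, accurately reflect the content of that cited result.
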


\begin{remark}
\begin{enumerate}
\item[(i)] On the interpretation of $\lambda (f)$, we observe that in general the element  $f$ of $\bt$ is not an element of $L^2 (\rn)$,
it might be a singular distribution. Thus,
$\langle f,\,\phi_{0,m}\rangle$ and $\langle f,\,\psi_{i,j,m}\rangle$ require an interpretation,
which has been done in the proof
of Proposition \ref{wav-type2} (see
\cite[Theorem 4.12]{lsuyy} for all the details).

\item[(ii)] In \cite{lsuyy}, biorthogonal wavelet systems in the sense of
Cohen et al. \cite{codafe92} have been considered.
But here we do not need this generality, orthonormal wavelet systems are sufficient for
our investigations.

\item[(iii)] It is not claimed that the restriction in \eqref{eq-03} is optimal.

\item[(iv)] For the case $s\in(0,\fz)$, we refer the reader also to \cite[Section 4.2]{ysy}.
\end{enumerate}
\end{remark}

%&&&&&&&&&&&&&&&&&&&&&&&&&&&&&&&&&&&&&&&&&&&&&&&&&&&&&&&&&&&&&&&&&&&&&&&&&&&&&&&&&&&&&
%&&&&&&&&&&&&&&&&&&&&&&&&&&&&&&&&&&&&&&&&&&&&&&&&&&&&&&&&&&&&&&&&&&&&&&&&&&&&&&&&&&&&&

\subsection{Characterization by differences}

%&&&&&&&&&&&&&&&&&&&&&&&&&&&&&&&&&&&&&&&&&&&&&&&&&&&&&&&&&&&&&&&&&&&&&&&&&&&&&&&&&&&&&
%&&&&&&&&&&&&&&&&&&&&&&&&&&&&&&&&&&&&&&&&&&&&&&&&&&&&&&&&&&&&&&&&&&&&&&&&&&&&&&&&&&&&&

Historically the characterization by differences (together with some characterizations by approximations)
has been the first description of Besov spaces.
In addition, they look also more transparent than the definition
in terms of convolutions. For that reason the authors of this article have studied those characterizations with a
certain care in \cite{ysy}. To recall one of the results obtained in \cite{ysy},
we first need some notation.

For any $M\in\nn$, function $f: \rn \to \cc$ and $h,\,x\in\rn$, let
\[
\Delta_h^Mf(x):= \sum_{j=0}^M \, (-1)^j\lf(\gfz{M}{j}\right)f(x+(M-j)h)\, ,
\]
where $\big(\gfz{M}{j}\big)$ for any $j\in\{0,\ldots, M\}$ denotes the \emph{binomial coefficients}.
For any $p\in(0,\fz]$, let $L^p(\rn)$ denote the \emph{Lebesgue space} which consists of all measurable functions $f$ such that
$$\|f\|_{L^p(\rn)}:=\lf[\int_\rn |f(x)|^p\,dx\right]^{1/p}<\fz,$$
with the usual modification made when $p=\fz$, and $L_\loc^p(\rn)$ the space of all measurable functions
which belong locally to $L^p(\rn)$.
For any $\tau\in[0, \fz)$, $p\in (0,\fz]$ and $f\in L_\loc^p(\rn)$, let
\begin{align}\label{2.17}
\|f\|_{L^p_\tau(\rn)} := \sup_{P\in\mathcal{Q},\,|P|\ge1}
\frac1{|P|^\tau}
\lf[\dint_P|f(x)|^p\,dx\right]^{1/p},
\end{align}
with the usual modification made when $p=\fz$.
Denote by $L^p_\tau(\rn)$ the set of all
functions $f$ satisfying $\|f\|_{L^p_\tau(\rn)}<\fz$.
Obviously, for any $p\in(0,\fz]$, $L^p_0(\rn)=L^p(\rn)$.
Furthermore, we write
\[
\|f\|^\spadesuit_{\bt}:= \sup_{P\in\mathcal{Q}}
\frac1{|P|^\tau} \, \lf\{\int_0^{2\min\{\ell(P),1\}}t^{-sq}
\sup_{t/2\le|h|<t}\lf[\int_P
|\Delta_h^M f(x)|^p\,dx\right]^{q/p}\,\frac{dt}{t}\right\}^{1/q}\, .
\]

The following difference characterization was proved  in
\cite[Theorems 4.7 and 4.9]{ysy}. Here we focus on the case   $\tau\in[0,1/p]$.

\begin{proposition}\label{t4.7}
Let $q\in(0,\fz]$ and $M\in\nn$.
\begin{enumerate}
\item[{\rm(i)}] Let $p\in [1, \fz]$, $s\in(0,M)$
and $\tau \in[0,1/p].$
Then $f\in\bt$ if and only
if $f\in L^p_\tau(\rn)$ and $\|f\|^\spadesuit_{\bt}<\fz$. Furthermore,
$\|f\|_{L^p_\tau(\rn)}+\|f\|^\spadesuit_{\bt}$ and $\|f\|_{\bt}$ are
equivalent with the positive equivalence constants independent of $f$.

\item[{\rm(ii)}] Let $p\in (0,1)$, $s\in(\sigma_p,M)$
and $\tau \in[0,1/p].$
Then $f\in\bt$ if and only
if $f\in L^p_\tau(\rn)$, $\sup_{\gfz{P\in\cq}{\ell(P)\ge 1}}|P|^{-\tau}\|f\|_{B^{s_0}_{p,\fz}(2P)}<\fz$
with $s_0\in(\sigma_p,s)$ and $\|f\|^\spadesuit_{\bt}<\fz$. Furthermore,
$\|f\|_{L^p_\tau(\rn)}+\sup_{\gfz{P\in\cq}{\ell(P)\ge 1}}|P|^{-\tau}\|f\|_{B^{s_0}_{p,\fz}(2P)}+
\|f\|^\spadesuit_{\bt}$ and $\|f\|_{\bt}$ are
equivalent  with the positive equivalence constants independent of $f$.
\end{enumerate}
\end{proposition}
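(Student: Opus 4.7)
The plan is to establish the two-sided inequalities $\|f\|_{\bt}\asymp\|f\|_{L^p_\tau(\rn)}+\|f\|^\spadesuit_{\bt}$ in part (i), and the analogous equivalence with the extra local Besov term in part (ii), by splitting into an ``easy'' direction (differences and local $L^p$-means are controlled by the Littlewood--Paley quasi-norm) and a ``hard'' direction (the Littlewood--Paley quasi-norm is controlled by differences plus a zero-order piece). Throughout the argument the supremum over dyadic cubes $P\in\cq$ with weight $|P|^{-\tau}$ must be preserved at every step; this is the feature that distinguishes the Morrey--Besov setting from the classical one and is where most of the care is needed.

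For the direction $\|f\|^\spadesuit_{\bt}+\|f\|_{L^p_\tau(\rn)}\lesssim \|f\|_{\bt}$, I would use the decomposition $f=\vz_0\ast f+\sum_{j\ge1}\vz_j\ast f$ (with convergence in $\cs'(\rn)$), together with two complementary bounds for each band-limited piece: the trivial estimate $\|\Delta_h^M(\vz_j\ast f)\|_{L^p(P)}\lesssim \|\vz_j\ast f\|_{L^p(C P)}$, and the smoothness estimate $\|\Delta_h^M(\vz_j\ast f)\|_{L^p(P)}\lesssim |h|^M 2^{jM}\|\vz_j\ast f\|_{L^p(CP)}$ obtained via Bernstein's inequality (with a Peetre maximal function substitute in the range $p\in(0,1)$). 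Splitting the sum over $j$ at $j\approx\log_2(1/|h|)$, integrating the pair of bounds against $t^{-sq-1}\,dt$ on $(0,2\min\{\ell(P),1\})$, and applying Minkowski's or the quasi-triangle inequality in $\ell^q$ yields the $\|f\|^\spadesuit_{\bt}$ bound (the restriction $s<M$ is used here to make the split in $j$ converge). The local $L^p_\tau$ estimate in part (i) follows directly from the $j=0$ term combined with the embedding inherent in the definition of $\bt$.

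For the reverse direction $\|f\|_{\bt}\lesssim\|f\|_{L^p_\tau(\rn)}+\|f\|^\spadesuit_{\bt}$, I would invoke the vanishing-moment structure of $\vz$: since $\cf\vz$ vanishes at the origin to high order, one can represent, for each $j\ge1$,
\[
\vz_j\ast f(x)=\int_\rn K_j(h)\,\Delta_h^M f(x)\,dh
\]
for a kernel $K_j$ obtained by reorganizing the Fourier multiplier as a combination of shifts, with $K_j$ supported essentially at scale $2^{-j}$ and with controlled $L^1$-mass (this is the same mechanism used in the classical difference characterization of $B^s_{p,q}(\rn)$, as in Triebel's monographs). Taking $L^p(P)$-norms in $x$, using Minkowski's integral inequality (or the analogous quasi-inequality when $p<1$), and converting $dh$ into a $dt$-integral over annuli $t/2\le|h|<t$ transforms the $\vz_j\ast f$ bound into precisely the expression defining $\|f\|^\spadesuit_{\bt}$. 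The low-frequency term $\vz_0\ast f$ is absorbed into $\|f\|_{L^p_\tau(\rn)}$ via a standard maximal-function argument applied on cubes of side length at least one.

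The main obstacle is part (ii), where the quasi-norm $\|\cdot\|_{L^p_\tau(\rn)}$ alone no longer dominates the low-frequency piece for $p\in(0,1)$: a distribution in $\bt$ need not be locally integrable in the usual sense, and the maximal function bounds fail in this range. The remedy, and the reason for inserting the extra term $\sup_{\ell(P)\ge1}|P|^{-\tau}\|f\|_{B^{s_0}_{p,\fz}(2P)}$ with $s_0\in(\sigma_p,s)$, is that this auxiliary supremum supplies exactly the local smoothness needed to justify the Peetre maximal substitute for Bernstein's inequality and to control $\vz_0\ast f$ on large cubes. The proof pattern is the same as in part (i), but each time $p<1$ forces a classical inequality, it must be upgraded using this local Besov buffer; matching its scale $s_0$ carefully (strictly between $\sigma_p$ and $s$) and checking that the weighted supremum over $P$ survives the reduction to Peetre maximal inequalities is the delicate technical point.
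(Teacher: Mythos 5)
You should first note that the paper does not prove this proposition at all: it is quoted verbatim from the monograph \cite[Theorems 4.7 and 4.9]{ysy} (``The following difference characterization was proved in\ldots''), so there is no in-paper argument to compare against. Your outline is the standard route to such difference characterizations and is essentially the strategy carried out in that reference: one direction by decomposing $f=\sum_j\varphi_j\ast f$, using the pair of bounds $\|\Delta_h^M(\varphi_j\ast f)\|_{L^p}\ls\min\{1,(|h|2^j)^M\}\|\varphi_j\ast f\|_{L^p}$ and splitting at $2^j\asymp 1/|h|$ (this is where $s<M$ enters); the converse by writing $\varphi_j\ast f$ as an average of $M$-th differences, exploiting the vanishing moments of $\varphi$; and, for $p\in(0,1)$, the extra local term $\sup_{\ell(P)\ge1}|P|^{-\tau}\|f\|_{B^{s_0}_{p,\fz}(2P)}$ compensating for the failure of Minkowski-type inequalities and of local integrability. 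So the approach is sound.

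One point in your sketch is more delicate than you acknowledge, and it is not specific to $p<1$. When you estimate $\|\Delta_h^M f\|_{L^p(P)}$ for a small cube $P$ (say $\ell(P)=2^{-j_P}$) and $|h|\le 2\ell(P)$, the frequency bands with $j<j_P$ are not covered by the sum in the definition of $\|f\|_{\bt}$ localized to $P$; you must pass to the dyadic cube $P_j\supset P$ of side $2^{-j}$, which costs a factor $|P_j|^{\tau}/|P|^{\tau}=2^{(j_P-j)n\tau}$. With only the crude localization $\|\varphi_j\ast f\|_{L^p(P)}\le\|\varphi_j\ast f\|_{L^p(P_j)}$ and the smoothness bound $(|h|2^j)^M$, the resulting geometric sum over $j<j_P$ diverges against the weight $|P|^{-\tau}$ whenever $n\tau\ge M-s$, which is admissible under the stated hypotheses (e.g.\ $\tau$ close to $1/p$ and $M-s$ small). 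The fix is a Plancherel--P\'olya/Nikolskii-type gain: a function band-limited to $|\xi|\ls 2^j$ restricted to a cube of side $2^{-j_P}\ll 2^{-j}$ satisfies $\|\varphi_j\ast f\|_{L^p(P)}\ls 2^{(j-j_P)n/p}\|\varphi_j\ast f\|_{L^p(P_j)}$ (via $L^\fz$ control and the Peetre maximal function), and only with this extra factor $2^{(j-j_P)n/p}$ does the sum close for all $\tau\in[0,1/p]$. You invoke the Peetre maximal function only as a substitute for Bernstein's inequality when $p\in(0,1)$; in fact it is needed for every $p$ once $\tau>0$ is large enough, and this is precisely the place where ``the weighted supremum over $P$ survives'' requires a genuine argument rather than bookkeeping.
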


%&&&&&&&&&&&&&&&&&&&&&&&&&&&&&&&&&&&&&&&&&&&&&&&&&&&&&&&&&&&&&&&&&&&&&&&&&&&&&&&&&&&&&
%&&&&&&&&&&&&&&&&&&&&&&&&&&&&&&&&&&&&&&&&&&&&&&&&&&&&&&&&&&&&&&&&&&&&&&&&&&&&&&&&&&&&&

\subsection{Interpolation of Besov-type spaces}

%&&&&&&&&&&&&&&&&&&&&&&&&&&&&&&&&&&&&&&&&&&&&&&&&&&&&&&&&&&&&&&&&&&&&&&&&&&&&&&&&&&&&&
%&&&&&&&&&&&&&&&&&&&&&&&&&&&&&&&&&&&&&&&&&&&&&&&&&&&&&&&&&&&&&&&&&&&&&&&&&&&&&&&&&&&&&

The interpolation method we shall use is the $\pm$-method introduced by
Gustavsson and Peetre \cite{gp77,g82}. To recall its definition,
we consider a  {couple of quasi-Banach spaces}
(for short, a  {quasi-Banach couple}), $X_0$ and $X_1$,
which are continuously embedded into a larger Hausdorff topological
vector space $Y$. The {space} $X_0+X_1$ is given by
$$X_0+X_1:=\{h\in Y:\ \exists\ h_i\in X_i,\ i\in\{0,1\},\ {\rm such\ that}\ h=h_0+h_1\},$$
equipped with the  {quasi-norm}
\[
\|h\|_{X_0+X_1}:=\inf\lf\{\|h_0\|_{X_0}+\|h_1\|_{X_1}:\ h=h_0+h_1,
\ h_0\in X_1\ {\rm and}\ h_1\in X_1\right\}.
\]

\begin{definition}
Let $(X_0,X_1)$ be a  quasi-Banach couple and $\Theta \in (0,1)$.
An $a\in X_0+X_1$ is said to belong to
$\laz X_0, X_1,\Theta\raz$ if there exists a sequence $\{a_i\}_{i\in\zz}
\subset X_0\cap X_1$ such that $a=\sum_{i\in\zz}\, a_i$ with convergence
in $X_0+X_1$
and, for any finite subset $F\subset \zz$ and any bounded sequence $\{\varepsilon_i\}_{i\in\zz}\subset\cc$,
\begin{align}\label{2.1x}
\lf\|\sum_{i\in F} \varepsilon_i \, 2^{i(j-\Theta)}\, a_i\right\|_{X_j}\le
C \sup_{i\in\zz}|\varepsilon_i|
\end{align}
for some non-negative constant $C$ independent of $F$, $\{\varepsilon_i\}_{i\in\zz}$
and $j\in\{0,1\}$. The {quasi-norm} of
$a\in\laz X_0, X_1, \Theta\raz$  is defined as
\[
\|a\|_{\laz X_0, X_1,\Theta\raz}:=\inf \lf\{C: \, C\ \ \mbox{satisfies}\ \ \eqref{2.1x}\right\}.
\]
\end{definition}

The following property is taken from \cite[Proposition 6.1]{gp77}.

\begin{proposition}\label{gustav}
Let $(A_0,A_1)$ and  $(B_0,B_1)$ be any two quasi-Banach couples and  $\Theta \in (0,1)$.
\begin{enumerate}
\item[{\rm(i)}]  It holds true that $\laz A_0, A_1,\Theta\raz$ is   a quasi-Banach space.

\item[{\rm(ii)}] If $T$ is a linear continuous operator from $A_i$ into $B_i$, $i\in\{0,1\}$, then $T$ maps  $\laz A_0, A_1,\Theta \raz$
continuously into $\laz B_0, B_1,\Theta\raz$. Furthermore,
\[
\| \, T \, \|_{\laz A_0, A_1,\Theta \raz \to \laz B_0, B_1,\Theta \raz} \le \max \lf\{
\| \, T \, \|_{A_0 \to B_0} ,\, \|\, T \, \|_{A_1 \to  B_1}\right\}\, .
\]
\end{enumerate}
\end{proposition}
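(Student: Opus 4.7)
I would treat the two parts separately, starting with the interpolation property (ii) since it is shorter and suggests how the decompositions in the definition behave under natural operations.

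For part (ii), the plan is to transport a near-optimal decomposition of $a$ through $T$. Fix $\eta>0$ and choose $\{a_i\}_{i\in\zz}\subset A_0\cap A_1$ with $a=\sum_{i\in\zz}a_i$ (convergence in $A_0+A_1$) such that the defining inequality \eqref{2.1x} holds with constant $C\le (1+\eta)\|a\|_{\laz A_0,A_1,\Theta\raz}$. Since $T:A_j\to B_j$ is continuous for each $j\in\{0,1\}$, it extends continuously to $A_0+A_1\to B_0+B_1$, so $Ta=\sum_{i\in\zz}Ta_i$ converges in $B_0+B_1$, and each $Ta_i$ lies in $B_0\cap B_1$. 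For any finite $F\subset\zz$ and bounded $\{\varepsilon_i\}\subset\cc$, linearity gives
\[
\Big\|\sum_{i\in F}\varepsilon_i\,2^{i(j-\Theta)}\,Ta_i\Big\|_{B_j}\le \|T\|_{A_j\to B_j}\,\Big\|\sum_{i\in F}\varepsilon_i\,2^{i(j-\Theta)}\,a_i\Big\|_{A_j}\le \|T\|_{A_j\to B_j}\,C\sup_i|\varepsilon_i|.
\]
Taking the maximum over $j\in\{0,1\}$, then letting $\eta\to 0$ and taking the infimum over admissible decompositions, yields the claimed operator-norm bound.

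For part (i), I would first verify the quasi-norm axioms. Positive homogeneity is immediate from scaling each $a_i$ by $\lambda$. For the quasi-triangle inequality, given admissible decompositions $a=\sum_i a_i$ and $b=\sum_i b_i$ with constants $C_a,C_b$, the telescoped sum $a+b=\sum_i(a_i+b_i)$ is admissible with constant $K_0(C_a+C_b)$, where $K_0$ is the larger of the moduli of concavity of $A_0$ and $A_1$. The key intermediate step is the continuous embedding $\laz A_0,A_1,\Theta\raz\hookrightarrow A_0+A_1$: testing \eqref{2.1x} with $F=\{i_0\}$ and $\varepsilon_{i_0}=1$ yields $\|a_i\|_{A_j}\le C\,2^{i(\Theta-j)}$ for every $i$ and $j$, so splitting $a=\sum_{i\le 0}a_i+\sum_{i>0}a_i$ and invoking Aoki--Rolewicz to sum a geometric series in the quasi-Banach setting gives $\|a\|_{A_0+A_1}\lesssim \|a\|_{\laz A_0,A_1,\Theta\raz}$.

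Completeness then follows by a standard subsequence argument. Given a Cauchy sequence $\{a^{(n)}\}$ in $\laz A_0,A_1,\Theta\raz$, the embedding just established makes it Cauchy, hence convergent to some $a$, in $A_0+A_1$. I would extract a rapidly Cauchy subsequence $\{a^{(n_k)}\}$ with geometrically small differences in the interpolation quasi-norm, select near-optimal admissible decompositions of each successive difference $a^{(n_{k+1})}-a^{(n_k)}$, and patch them into a single admissible decomposition of $a-a^{(n_1)}$. The main obstacle will be this patching step: one must combine infinitely many finite-sign-sequence estimates of the form \eqref{2.1x} into one uniform estimate, with the reindexing and summation respecting both the geometric weight $2^{i(j-\Theta)}$ and the quasi-triangle constants of $A_0$ and $A_1$ simultaneously. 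Once that is arranged, the tail of the patched decomposition controls $\|a-a^{(n_k)}\|_{\laz A_0,A_1,\Theta\raz}\to 0$, which promotes subsequence convergence to full convergence and completes the proof.
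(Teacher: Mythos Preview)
The paper does not actually prove this proposition: it simply attributes the result to \cite[Proposition~6.1]{gp77} (Gustavsson--Peetre) and moves on, so there is no ``paper's own proof'' to compare against. Your proposal therefore goes well beyond what the paper does.

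That said, your sketch is essentially the standard route and is sound in outline. Part~(ii) is correct as written. For part~(i), your embedding argument and the completeness scheme are the right ideas; the only place I would tighten the exposition is the ``patching step'' you flag yourself. One clean way to organize it is as follows: after passing to a subsequence with $\|a^{(n_{k+1})}-a^{(n_k)}\|_{\laz A_0,A_1,\Theta\raz}\le 2^{-k}$, choose admissible decompositions $a^{(n_{k+1})}-a^{(n_k)}=\sum_i c_i^{(k)}$ with constant at most $2\cdot 2^{-k}$, and set $d_i:=\sum_{k\ge 1} c_i^{(k)}$ (this series converges in $A_0\cap A_1$ by the singleton estimate $\|c_i^{(k)}\|_{A_j}\le 2\cdot 2^{-k}\,2^{i(\Theta-j)}$ and Aoki--Rolewicz). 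Then $\{d_i\}_i$ is an admissible decomposition of $a-a^{(n_1)}$, because for any finite $F$ and bounded $\{\varepsilon_i\}$ one has, by the quasi-triangle inequality in $A_j$ (again via an equivalent $r$-norm),
\[
\Big\|\sum_{i\in F}\varepsilon_i\,2^{i(j-\Theta)}d_i\Big\|_{A_j}
\lesssim \Big(\sum_{k\ge 1}\Big\|\sum_{i\in F}\varepsilon_i\,2^{i(j-\Theta)}c_i^{(k)}\Big\|_{A_j}^{\,r}\Big)^{1/r}
\lesssim \Big(\sum_{k\ge 1}2^{-kr}\Big)^{1/r}\sup_i|\varepsilon_i|.
\]
The same computation applied to tails $\sum_{k\ge K}$ gives $\|a-a^{(n_K)}\|_{\laz A_0,A_1,\Theta\raz}\to 0$. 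This fills the gap you anticipated without any new ideas.
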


We also refer the reader  to Nilsson \cite{n85} for  more information on this interpolation method.

The following interpolation  property of Besov-type spaces and the related sequence spaces via the $\pm$ method
was obtained in \cite[Theorem 2.12]{ysy15}.

\begin{theorem}\label{COMI}
Let $\tz\in(0,1)$, $s_i\in\rr$, $\tau_i\in[0,\fz)$
and $p_i$, $q_i\in(0,\fz]$, $i\in\{0,1\},$
be such that $s=(1-\tz)s_0+\tz s_1$, $\tau=(1-\tz)\tau_0+\tz\tau_1$,
\[
\frac1p=\frac{1-\tz}{p_0}+\frac\tz{p_1}\quad \mbox{and}  \quad \frac1q=\frac{1-\tz}{q_0}+\frac\tz{q_1}.
\]
If $\tau_0 \, p_0 = \tau_1\, p_1=\tau p$, then
\begin{equation*}
\lf\laz B_{p_0,q_0}^{s_0,\tau_0}(\rn), B_{p_1,q_1}^{s_1,\tau_1}(\rn),\tz\right\raz=B_{p,q}^{s,\tau}(\rn)
\quad\mbox{and}\quad
\lf\laz b_{p_0,q_0}^{s_0,\tau_0}(\rn), b_{p_1,q_1}^{s_1,\tau_1}(\rn),\tz\right\raz=b_{p,q}^{s,\tau}(\rn).
\end{equation*}
\end{theorem}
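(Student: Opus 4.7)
The plan is to deduce the function-space identity from the sequence-space identity $\langle b^{s_0,\tau_0}_{p_0,q_0},b^{s_1,\tau_1}_{p_1,q_1},\tz\rangle=b^{s,\tau}_{p,q}(\rn)$ via the wavelet isomorphism of Proposition~\ref{wav-type2}, and then to prove this sequence-space identity by an explicit level-set decomposition. For the reduction, I would choose an orthonormal Daubechies wavelet with regularity $N_1$ so large that \eqref{eq-03} is satisfied simultaneously for all three parameter tuples $(s_0,\tau_0,p_0,q_0)$, $(s_1,\tau_1,p_1,q_1)$ and $(s,\tau,p,q)$; this is possible because the right-hand side of \eqref{eq-03} depends only on finitely many parameters. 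Proposition~\ref{wav-type2} then gives a bounded analysis map $\Lambda\colon f\mapsto\lambda(f)$ and a bounded synthesis map $S$ between each $B^{s_i,\tau_i}_{p_i,q_i}(\rn)$ and the corresponding sequence space, with $S\circ\Lambda=\mathrm{id}$. Applying Proposition~\ref{gustav}(ii) to $\Lambda$ and to $S$ yields an isomorphism $\langle B^{s_0,\tau_0}_{p_0,q_0},B^{s_1,\tau_1}_{p_1,q_1},\tz\rangle\cong\langle b^{s_0,\tau_0}_{p_0,q_0},b^{s_1,\tau_1}_{p_1,q_1},\tz\rangle$, reducing the theorem to its sequence-space counterpart.

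For the sequence-space identity I would prove the two inclusions separately. For the embedding $\langle b^{s_0,\tau_0}_{p_0,q_0},b^{s_1,\tau_1}_{p_1,q_1},\tz\rangle\hookrightarrow b^{s,\tau}_{p,q}(\rn)$, I would take a representation $t=\sum_{i\in\zz}t^{(i)}$ satisfying \eqref{2.1x} with $X_j=b^{s_j,\tau_j}_{p_j,q_j}(\rn)$; then for each $P\in\cq$ and each scale $j'\ge\max\{j_P,0\}$ I would split the $i$-sum at a threshold $i_0=i_0(j',P)$ balancing the two endpoint estimates, take the $\ell^{p_j}$-norm over $\{m:Q_{j',m}\subset P\}$ in each piece, and sum out in $j'$ using $1/q=(1-\tz)/q_0+\tz/q_1$. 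The homogeneity condition $\tau_0 p_0=\tau_1 p_1=\tau p$ makes the two weights $|P|^{-\tau_0}$ and $|P|^{-\tau_1}$ coming from the endpoint norms combine into the single weight $|P|^{-\tau}$ on the right-hand side. For the converse inclusion, given $t\in b^{s,\tau}_{p,q}(\rn)$, I would define $t^{(i)}$ as a level-set truncation
\[
t^{(i)}_{\iota,j',m}:=t_{\iota,j',m}\,\chi_{E_i}(\iota,j',m),
\]
where $E_i$ is the set of indices $(\iota,j',m)$ for which a suitably $j'$-weighted magnitude of $t_{\iota,j',m}$ lies in a dyadic band with ratio depending on $s_0,s_1,\tz$, and then verify \eqref{2.1x} for both $j=0$ and $j=1$ by a cube-by-cube computation.

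The main obstacle will be the converse inclusion, specifically the interaction between the signed sums $\sum_{i\in F}\varepsilon_i\,2^{i(j-\tz)}t^{(i)}$ and the Morrey-type outer supremum over $P\in\cq$: for each $P$ one must ensure that the partial sums are simultaneously bounded in both endpoint norms with constants uniform in $P$ and in the signs $\{\varepsilon_i\}$. The condition $\tau_0 p_0=\tau_1 p_1=\tau p$ is precisely the algebraic homogeneity that makes this possible, since it ensures that the $p$-th power of the Morrey weight transforms along the interpolation segment by the same convex combination as the smoothness and integrability exponents, so that the $P$-local weighted masses on both sides match up for every dyadic $P$ and every scale $j'$. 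Without this identity the endpoint weights could not be matched uniformly in $P$, and only strictly weaker one-sided interpolation inclusions would follow.
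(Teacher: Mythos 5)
The paper does not actually prove Theorem \ref{COMI}: it is imported verbatim from \cite[Theorem 2.12]{ysy15}, so there is no internal argument to measure your proposal against. That said, your architecture coincides with the strategy of that reference: a retract/coretract reduction to the coefficient spaces via a wavelet (there, $\varphi$-transform) isomorphism combined with Proposition \ref{gustav}(ii), followed by an identification of $\laz b^{s_0,\tau_0}_{p_0,q_0}(\rn),b^{s_1,\tau_1}_{p_1,q_1}(\rn),\tz\raz$ with a Calder\'on-type product of the coefficient lattices. Your diagnosis of where $\tau_0p_0=\tau_1p_1=\tau p$ enters is also essentially right, though it can be sharpened: the inclusion $\laz\cdot,\cdot,\tz\raz\hookrightarrow b^{s,\tau}_{p,q}(\rn)$ needs only $\tau=(1-\tz)\tau_0+\tz\tau_1$ (H\"older applied inside each fixed $P$), whereas the factorization $|t|\le|t_0|^{1-\tz}|t_1|^{\tz}$ in the converse direction, with the natural choice $t_i\sim|t|^{p/p_i}$ times weights, forces $|P|^{-\tau_i}=|P|^{-\tau p/p_i}$ uniformly in the dyadic cube $P$, which is exactly the stated homogeneity.

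Two genuine gaps remain. First, the coefficient space in Proposition \ref{wav-type2} is not $\sbt$ alone but its direct sum with the Morrey-type $\ell^p$ block carrying the scaling coefficients $\langle f,\phi_{0,m}\rangle$; the retract argument must interpolate that block as well (routine, but it has to be done, and it again uses the same homogeneity of $\tau$). Second, and more substantially, the entire weight of the theorem sits in the converse sequence-space inclusion, which you only gesture at: for quasi-Banach lattices (any of $p_i,q_i<1$) the identification of $\laz X_0,X_1,\tz\raz$ with the Calder\'on product is not automatic (the Gustavsson--Peetre and Nilsson results require additional convexity-type hypotheses on the lattices), and verifying \eqref{2.1x} for your level-set pieces $t^{(i)}$ uniformly in the finite sets $F$, the signs $\varepsilon_i$, and the outer supremum over $P\in\cq$ is precisely where the work lies. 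As an outline the plan is sound and matches the cited source, but it is not yet a proof without carrying out that verification.
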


%&&&&&&&&&&&&&&&&&&&&&&&&&&&&&&&&&&&&&&&&&&&&&&&&&&&&&&&&
%&&&&&&&&&&&&&&&&&&&&&&&&&&&&&&&&&&&&&&&&&&&&&&&&&&&&&&&&

\section{Proof of Theorem \ref{charact}\label{s4}}

%&&&&&&&&&&&&&&&&&&&&&&&&&&&&&&&&&&&&&&&&&&&&&&&&&&&&&&&&
%&&&&&&&&&&&&&&&&&&&&&&&&&&&&&&&&&&&&&&&&&&&&&&&&&&&&&&&&

In this section, by employing the characterizations of Besov-type spaces
via wavelets (see Proposition \ref{wav-type2}) and differences (see Proposition \ref{t4.7})
as well as their interpolation property (see Theorem \ref{COMI}), we give the proof of Theorem \ref{charact}.

First we have to introduce more notation.
For any $Q\in\mathcal{Q}$ and $j\in\zz$, we let
\begin{align}\label{menge1}
J_Q:=\{r\in\zz^n:\ |\supp \phi_{0,r}\cap Q|>0\}
\end{align}
and
\begin{align}\label{menge2}
I_{Q,j}:=\{r\in\zz^n:\ \exists\ i\in\{1,\ldots,2^n-1\}\ \mbox{such that}\
|\supp \psi_{i,j,r}\cap Q|>0\}.
\end{align}

Now we are ready to prove Theorem \ref{charact}.

\begin{proof}[Proof of Theorem \ref{charact}]
We show this theorem by four  steps. Steps 1) - 3) will deal with $\cx$,
in Step 4) the functions $h_{i,j,m}$ are investigated.

{\bf Step 1)} We first consider the case $\tau\in(1/p,\fz)$.
Here it is enough to use the monotonicity of the space
$\bt$ with respect to $s$ and $q$ (see Remark \ref{grund}) and the fact
\[
\cx \in B^0_{\infty,q} (\rn) \qquad \Longleftrightarrow \qquad q=\infty;
\]
see \cite[Lemma ~4.6.3/2]{RS}.
Because of
$B^{s,\tau}_{p,q}(\rn) = B^{s+n(\tau-1/p)}_{\fz,\fz}(\rn)$ (see Remark \ref{grund}(iv)),
we obtain
 $\cx \in B^{s,\tau}_{p,q}(\rn)$ if and only if
$s+n(\tau-1/p) \le 0$.

{\bf Step 2)}
Sufficiency in case $\tau \in [0,1/p]$.
We employ the wavelet characterization of $\bt$
as given in Proposition \ref{wav-type2}. Thus, we have to check
the finiteness of
\begin{align*}
&
\sup_{P\in\mathcal{Q},\ |P|\ge1}\frac1{|P|^\tau}\lf(\sum_{k\in
\mathcal{J}_P}|\langle \cx, \phi_{0,k}\rangle|^p\right)^{\frac 1p}
\\
&\hs+\sup_{P\in\mathcal{Q}}\frac1{|P|^\tau}
\lf\{\sum_{j=\max\{j_P,0\}}^\fz2^{j(s+n/2)q}\sum_{i=1}^{2^n-1}
\lf[\sum_{k\in
\mathcal{I}_{P,j}}2^{-jn}|\langle \cx, \psi_{i,j,k} \rangle|^p\right]^{\frac qp}\right\}^{\frac 1q},
\end{align*}
where $\phi_{0,k}$ and $\psi_{i,j,k} $ are as in \eqref{4.21}.
The first term of the above summation is always finite, hence we may concentrate on the second.
Because of the moment conditions of $\psi_{i,j,k}$ in \eqref{moment},
we conclude that the scalar product
$\langle \cx, \psi_{i,j,k}\rangle=0$ if either $\supp \psi_{i,j,k}
\subset \overline{Q_{0,0}}$ or
$\supp \psi_{i,j,k}  \cap \overline{Q_{0,0}}= \emptyset$.
We define
\[
\Omega_j:= \{r\in \zz^n: \quad
\exists\ i\in\{1,\ldots,2^n-1\}\ \mbox{such that}\,
\supp \psi_{i,j,r} \cap  \partial Q_{0,0} \neq \emptyset\}
\]
{and} $\omega_j := |\Omega_j|$ (the cardinality of $\Omega_j$).
The properties of the wavelet system are guaranteeing
\[
\omega_j \asymp 2^{j(n-1)}\, , \qquad \forall\,j \in \nn_0 \, .
\]
Now we consider two different cases for the size of the dyadic cube $P$.

\emph{Case 1)} Assume that $P\in\cq$ with $|P|\ge 1$.
In this case, we need a few more information about the set ${\mathcal I}_{P,j}$ defined in \eqref{menge2}.
Let $P:= Q_{m,\ell}$ with $m\in\zz\setminus\nn$ and $\ell\in\zz^n$. Then we know that
\[
\mathcal{I}_{P,j} \subset \bigcup_{|\ell-k|\le M} \{r\in\zz^n:\ \exists\ i\in\{1,\ldots,2^n-1\}\ \ \mbox{such that}\ \
|\supp \psi_{i,j,r} \cap Q_{m,k}|>0\}\, ,
\]
where $M$ is a fixed natural number (depending on $N_2$ and $N_3$).
It follows
\[
|\Omega_j \cap \mathcal{I}_{P,j}|\asymp |\Omega_j \cap \mathcal{I}_{Q_{0,0},j}|
\ls 2^{j(n-1)}\, , \qquad \forall\,j \in \nn_0 \,.
\]
In addition we shall use the obvious estimate
\begin{align}\label{eqxx}
|\langle \cx, \psi_{i,j,k} \rangle| &\le 2^{jn/2} \int_{[0,1]^n} | \psi_{i} (2^j x -k)| \, dx
\\
&\le 2^{-jn/2} (\max\{N_2,N_3\})^{n/2}\,
\lf[\int_\rn | \psi_{i} (y)|^ 2 \, dy \right]^{1/2}\ls  2^{-jn/2} \, .\nonumber
\end{align}
Consequently, by \eqref{eq-001}, \eqref{eq-002}, \eqref{eqxx} and the condition on $s$, for those cubes $P$ we have
\begin{align}\label{eq-003}
&\frac1{|P|^\tau}
\lf\{\sum_{j=\max\{j_P,0\}}^\fz2^{j(s+n/2)q}\sum_{i=1}^{2^n-1}
\lf[\sum_{k\in
\mathcal{I}_{P,j}}2^{-jn}|\langle \cx, \psi_{i,j,k} \rangle|^p\right]^{\frac qp}\right\}^{\frac 1q}
\\
&\quad=\frac1{|P|^\tau}
\lf\{\sum_{j=0}^\fz2^{j(s+n/2)q}\sum_{i=1}^{2^n-1}
\lf[\sum_{k\in
\mathcal{I}_{P,j} \cap \Omega_j} \, 2^{-jn}|\langle \cx, \psi_{i,j,k}\rangle|^p\right]^{\frac qp}\right\}^{\frac 1q}
\nonumber
\\
&\quad\ls
\lf\{\sum_{j=0}^\fz2^{j(s+n/2)q}\, 2^{j(n-1)q/p}\, 2^{-jnq/p} \, 2^{-jnq/2}
\right\}^{\frac 1q}
\ls
\lf\{\sum_{j=0}^\fz2^{j(s-1/p)q}\, \right\}^{\frac 1q}<\fz\, .
\nonumber
\end{align}

\emph{Case 2)} Assume now that $P\in\cq$ with $|P|< 1$. We may write
$P:= Q_{m,\ell}$ with $m \in \nn$ and $\ell\in\zz^n$.
Within  cubes of this size,
we have
\[
|\Omega_j \cap \mathcal{I}_{P,j}|\ls |\Omega_j \cap \mathcal{I}_{Q_{m,0},j}|
\ls 2^{(j-m)(n-1)}\, , \qquad \forall\,j \in\{m,m+1,\ldots\}.
\]
From this and \eqref{eq-001} and \eqref{eq-002},  for those cubes $P$ we deduce that
\begin{align*}
 \frac1{|P|^\tau} & \lf\{
\sum_{j=\max\{j_P,0\}}^\fz
2^{j(s+n/2)q}\sum_{i=1}^{2^n-1}
\lf[\sum_{k\in
\mathcal{I}_{P,j}}2^{-jn}|\langle \cx, \psi_{i,j,k}\rangle|^p\right]^{\frac qp}\right\}^{\frac 1q}
\\
&=2^{mn\tau}\,
\lf\{\sum_{j=m}^\fz2^{j(s+n/2)q}\sum_{i=1}^{2^n-1}
\lf[\sum_{k\in
\mathcal{I}_{P,j} \cap \Omega_j} \, 2^{-jn}|\langle \cx, \psi_{i,j,k}\rangle|^p\right]^{\frac qp}\right\}^{\frac 1q}
\\
& \ls  2^{mn\tau}\,
\lf\{\sum_{j=m}^\fz2^{j(s+n/2)q}\, 2^{(j-m)(n-1)q/p}\, 2^{-jnq/p} \, 2^{-jnq/2}
\right\}^{\frac 1q}
\\
& \ls  2^{mn\tau}\, 2^{-m(n-1)/p}
\lf\{\sum_{j=m}^\fz2^{j(s-1/p)q}\, \right\}^{\frac 1q}
\, .
\end{align*}
If either  $s=1/p$ and $q= \infty$ or $s<1/p$ and $q$ arbitrary,  we conclude that
\begin{align}\label{eq-004}
 \frac1{|P|^\tau}
\lf\{
\sum_{j=\max\{j_P,0\}}^\fz2^{j(s+n/2)q}\sum_{i=1}^{2^n-1}
\lf[\sum_{k\in
\mathcal{I}_{P,j}}2^{-jn}|\langle \cx, \psi_{i,j,k}\rangle|^p\right]^{\frac qp}\right\}^{\frac 1q}
\ls 2^{m(s+ n\tau - n/p)}\, ,
\end{align}
which is uniformly bounded in $m$ for $s+ n\tau - n/p\le 0$.
Both estimates together, namely, \eqref{eq-003} and \eqref{eq-004},
prove the sufficiency in cases (i) and (ii).

{\bf Step 3)} Necessity in case $\tau \in [0,1/p]$.
It seems to be difficult to apply the wavelet decomposition
because we do not know how many scalar products
satisfy the inequality
\[
 |\langle \cx, \psi_{i,j,k}\rangle| \ge c \, 2^{-jn/2}
\]
with some positive constant $c$ independent of $j$ and $k$.
For that reason we switch to differences (see Proposition \ref{t4.7}).
Since in case $p=\infty$ the claim is already known [see \eqref{eq-000}],
we may assume $p< \infty$.
In addition we mention that
the necessity of $s + n (\tau -1/p)\le 0$ follows from
Proposition \ref{grundp}(i).
It remains to deal with the relation between $s$ and $1/p$. By the embedding in
Remark \ref{grund}(iii), we only need to show
that $\cx$ is not in $B^{1/p,\tau}_{p,q}(\rn)$ with any given $\tau\in[0,1/p]$ and $q\in(0,\fz)$.

{\bf Substep 3.1)} First we assume that $p>(n-1)/n$, that is,
$\sigma_p < 1/p$.
In this situation we can employ  Proposition \ref{t4.7}.
By using the abbreviations from there and by choosing
$P=Q_{0,0}$, we find that
\begin{align*}
\sup_{t/2\le|h|<t}\lf\{\int_{Q_{0,0}}
|\Delta_h^M \cx (x)|^p\,dx\right\}^{\frac1p}\ge
\sup_{\gfz{t/2\le -h_1 <t}{h_i=0,\ i\in\{2,\ldots,n\}}} \lf\{\int_{\gfz{x_1\in[0, t/2)} {x_i\in[0,1],\ i\in\{2,\ldots,n\}}}
|\Delta_h^M \cx (x)|^p\,dx\right\}^{\frac1p}
\ge  (t/2)^{\frac1p}
\end{align*}
for any $t\in(0,1)$. This immediately implies
that $\|\, \cx \, \|^\spadesuit_{B^{1/p,\tau}_{p,q}(\rn)}= \infty$
for any $q\in(0, \infty)$.

{\bf Substep 3.2)} Now we consider the case $p\le(n-1)/n$, that is,
$\sigma_p \ge1/p$.
In addition, we may assume $\tau\in(0,1/p]$, due to the known result when $\tau=0$ [see \eqref{eq-000}].
 We prove $\cx\notin B^{1/p,\tau}_{p,q}(\rn)$ in this case by contradiction.

Assume that $\cx\in B^{1/p,\tau}_{p,q}(\rn)$ with some $p\le(n-1)/n$ and $q\in(0,\infty)$. We argue by
employing the $\pm$ interpolation method of Gustavsson--Peetre, in particular we shall use
\[
\left\langle B^{1/p, \, \tau}_{p,q} (\rn), B^{1/p_1, \, \tau_1}_{p_1,\infty} (\rn), \theta  \right\rangle
= B^{1/p_0, \, \tau_0}_{p_0,q_0}(\rn)\, ,
\]
with $\theta\in(0, 1)$, $\tau_0= (1-\theta)\, \tau + \theta \, \tau_1$,
\[
\frac 1{p_0} = \frac{1-\theta}{p} + \frac{\theta}{p_1}\, , \qquad
\frac 1{q_0} = \frac{1-\theta}{q} \qquad
\mbox{and}\qquad \frac{\tau}{p_1} = \frac{\tau_1}{p}\,;
\]
see Theorem \ref{COMI}.
We choose  $p_1> (n-1)/n$  and define
\[
\tau_1 := \frac{\tau \, p}{p_1} \, .
\]
Then, by Step 2), we know that  $\cx$ belongs to
$B^{1/p_1, \, \tau_1}_{p_1,\infty} (\rn)$, which, together with the assumption
$\cx\in B^{1/p,\tau}_{p,q}(\rn)$ and the above interpolation formula,
implies that
$\cx \in B^{1/p_0, \, \tau_0}_{p_0,q_0} (\rn)$ for some $q_0\in(0,\infty)$.
Taking $p_1$ as large  and $\theta$ as close to $0$ as we want, we arrive at a situation where also $p_0> (n-1)/n$.
But because of $q_0< \infty$ this is in contradiction to Substep 3.1).

Combining Substeps 3.1) and 3.2), we then know that
$\cx\notin B^{1/p,\tau}_{p,q}(\rn)$ whenever $q\in(0,\fz)$. This finishes the proof of  Theorem \ref{charact} restricted to $\cx$.

{\bf Step 4)} By the translation invariance of the Besov type spaces we have $\cx \in \bt$ if and only if $\cx_{0,m} \in \bt$.
Now we turn to the functions $h_{i,j,m}$.
The if-part follows completely analogous to  Step 1) - Step 3) by concentrating on $h_{i,0,0}$.
Next  we deal with the mapping $f \mapsto f(2\, \cdot\, )$.
Essentially as a consequence of the flexibility in choosing the system $\{\varphi_j\}_{j\in\nn_0}$ in the definition of the spaces $\bt$
we conclude that this mapping is bounded on $\bt$ for all admissible
parameters.
By taking into account the translation invariance of the Besov type spaces this   yields that the functions $h_{i,j,m}$ belong to
$\bt$ whenever $\cx $ is in $\bt$.

Concerning the only if-part  we argue as follows.
By the translation and the rotation invariance of $\bt$ it will be sufficient to deal with
$h_{1,j,0} $. The support of this function is given by
$[0,2^{-j}]^n$. There exists at least one dyadic subcube $Q_{j+1,k}$  such that
$h_{1,j,0}= 1 $  on this cube.
Without loss of generality we may assume $Q_{j+1,k}=Q_{j+1,0}$.
Let $\varrho $ be a compactly supported smooth
function such that $$\supp \varrho \subset \overline{Q_{j+1,0}} \cup \{x: ~ x_1\le 0 , \ldots, \, x_n \le 0\}$$
and $\varrho =1$ on $[2^{-j-3} , 2^{-j-2}]\times [-2^{-j-3}, 2^{-j-3}]^{n-1}$.
Those smooth functions $\varrho$ are pointwise multipliers for $\bt$ (see \cite[6.1.1]{ysy}).
Hence, if we assume $h_{1,j,0} \in \bt$ then $\varrho \cdot h_{1,j,0}\in \bt $ follows.
But locally, more exactly  on 
$$[2^{-j-3} , 2^{-j-2}]\times [-2^{-j-3}, 2^{-j-3}]^{n-1},$$
the product $\varrho \cdot h_{1,j,0}$ behaves like a characteristic function.
Following the arguments in Step 1) - Step 3) one can show that this can only be true
if $\cx $ itself belongs to $\bt$. This finishes the proof of Theorem \ref{charact}
\end{proof}

%&&&&&&&&&&&&&&&&&&&&&&&&&&&&&&&&&&&&&&&&&&&
%&&&&&&&&&&&&&&&&&&&&&&&&&&&&&&&&&&&&&&&&&&&&

\section{Proofs of Theorems \ref{limit5}, \ref{limit5b} and \ref{general} as well as Corollary \ref{klassisch}\label{s5}}

%&&&&&&&&&&&&&&&&&&&&&&&&&&&&&&&&&&&&&&&&&&&&
%&&&&&&&&&&&&&&&&&&&&&&&&&&&&&&&&&&&&&&&&&&&

This section is devoted to the proofs of Theorems \ref{limit5}, \ref{limit5b} and \ref{general}.
To be precise, in Section \ref{s5.1}, we give the proof of the sufficient conditions
for the existence of $ \langle f , \cx  \rangle$, while the proof of the necessary
conditions for the existence of $ \langle f , \cx  \rangle$ is presented in Section \ref{s5.2}.
Section \ref{s5.3} is devoted to the proofs of Theorems \ref{limit5} and \ref{limit5b}.
Finally the proofs of Corollary \ref{klassisch} and Theorem \ref{general}
are presented, respectively, in Sections \ref{s5.4} and \ref{s5.5}.

To prove these theorems, we shall first
discuss a reasonable way to define $\langle f, \cx \rangle $ for any $f \in B^{s, \tau}_{p,q}(\rn)$.
Fix $s,$ $p,$ $q$ and $\tau$, and let the wavelet system
$$\lf\{\phi_{0,k}, \psi_{i,j,k}:\  k \in \zz^n,\ j \in
\nn_0,\ i\in\{1, \ldots, 2^n-1\}\right\}$$
be admissible for $B^{s,\tau}_{p,q} (\rn)$ in the sense
of Proposition \ref{wav-type2}. Then the wavelet decomposition of $f$ is given by
\begin{align*}
f  = \sum_{k\in\zz^n}\, \langle f, \phi_{0,k} \rangle \, \phi_{0,k}+\sum_{i=1}^{2^n-1} \sum_{j\in\nn_0}
\sum_{k\in\zz^n}\, \langle f,\psi_{i,j,k} \rangle\, \psi_{i,j,k} =  \lim_{N \to \infty} S_N f
\end{align*}
with convergence in $\cs'(\rn)$, where
\[
 S_N f:=  \sum_{k\in\zz^n}\, \langle f, \phi_{0,k} \rangle \, \phi_{0,k}+\sum_{i=1}^{2^n-1} \sum_{j=0}^N
\sum_{k\in\zz^n}\, \langle f,\psi_{i,j,k} \rangle\, \psi_{i,j,k} \, .
\]
Observe that the above  both summations $\sum_{k\in\zz^n}$ are locally finite and hence $S_N f \in C^{N_1}(\rn)$
[due to \eqref{4.19} and \eqref{4.20}].
Then we define
\begin{align}\label{limit1}
 \langle f , \cx  \rangle := \lim_{N \to \infty} \langle S_N f , \cx  \rangle
= \lim_{N \to \infty} \int_{[0,1]^n} S_N f (x)\, dx
\end{align}
whenever this limit exists.

%&&&&&&&&&&&&&&&&&&&&&&&&&&&&&&&&&&&&&&&&&&&&&&&&&&&&&&&&&&&&&&&&&&&&&&&&&&&&&&&&&&&&
%&&&&&&&&&&&&&&&&&&&&&&&&&&&&&&&&&&&&&&&&&&&&&&&&&&&&&&&&&&&&&&&&&&&&&&&&&&&&&&&&&&&&

\subsection{Sufficient conditions for the existence of $ \langle f , \cx  \rangle$\label{s5.1}}

%&&&&&&&&&&&&&&&&&&&&&&&&&&&&&&&&&&&&&&&&&&&&&&&&&&&&&&&&&&&&&&&&&&&&&&&&&&&&&&&&&&&&
%&&&&&&&&&&&&&&&&&&&&&&&&&&&&&&&&&&&&&&&&&&&&&&&&&&&&&&&&&&&&&&&&&&&&&&&&&&&&&&&&&&&&

Now we turn to the sufficient condition for this existence which at the same time guarantees the independence of
$\langle f , \cx  \rangle$ from the chosen wavelet system.

\begin{theorem}\label{limit2}
Let $q\in (0,\infty]$ and $\tau \in [0,\infty)$.
\begin{enumerate}
\item[{\rm(i)}] Let $ p\in[1, \infty]$.
If $s= \frac 1p - 1$, then $f \mapsto \langle f , \cx  \rangle$  extends to a continuous linear functional on $B^{s, \tau}_{p,1} (\rn)$
which coincides on $B^{s, \tau}_{p,1} (\rn) \cap L^r(\rn)$, $r\in[1, \infty]$, with $\int_\rn f (x)  \cx (x) \, dx$.

\item[{\rm(ii)}] Let $p\in(0, 1)$ and $\tau \in [0, \frac{n-1}{np}]$.
If $s= (1-p \, \tau )n(\frac 1p - 1)$ and $q\in (0,p]$,
then $f \mapsto \langle f , \cx  \rangle$  extends to a continuous linear functional on $B^{s, \tau}_{p,q} (\rn)$
which coincides on $B^{s, \tau}_{p,q} (\rn) \cap L^r(\rn)$, $r\in[1, \infty]$, with $\int_\rn f (x)  \cx (x) \, dx$.

\item[{\rm(iii)}] Let $p\in(0,1)$ and $\tau =0$.
If $s= n(\frac 1p - 1)$ and $q\in (0,1]$,
then $f \mapsto \langle f , \cx  \rangle$  extends to a continuous linear functional on $B^{s, 0}_{p,1} (\rn)$
which coincides on $B^{s,0}_{p,q} (\rn) \cap L^r(\rn)$, $r\in[1, \infty]$, with $\int_\rn f(x)  \cx (x) \, dx$.
\end{enumerate}
\end{theorem}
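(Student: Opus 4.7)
I would fix an admissible orthonormal Daubechies wavelet basis $\{\phi_{0,k}, \psi_{i,j,k}\}$ of the sort required by Proposition \ref{wav-type2}, and for each $f \in \bt$ let $S_N f$ denote its wavelet partial sum up to level $N$. Since $S_N f \in C^{N_1}(\rn)$ is a locally finite sum, $\langle S_N f, \cx\rangle=\int_{[0,1]^n}S_N f$ is well-defined, and the task is to show the sequence $\{\langle S_N f,\cx\rangle\}_N$ is Cauchy with limit bounded by $C\|f\|_{\bt}$. Expanding the pairing,
\[
\langle S_N f,\cx\rangle = \sum_{k}\lambda_k\langle\phi_{0,k},\cx\rangle + \sum_{i=1}^{2^n-1}\sum_{j=0}^N\sum_k\lambda_{i,j,k}\langle\psi_{i,j,k},\cx\rangle,
\]
where the $\phi$-sum has finitely many nonzero terms and is estimated by Proposition \ref{wav-type2} applied with a fixed bounded dyadic cube $P$ containing a neighborhood of $[0,1]^n$. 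For the $\psi$-sum, the moment conditions \eqref{moment} force $\langle\psi_{i,j,k},\cx\rangle=0$ unless $k\in\Omega_j:=\{k:\supp\psi_{i,j,k}\cap\partial[0,1]^n\ne\emptyset\}$ (as introduced in the proof of Theorem \ref{charact}, with $|\Omega_j|\asymp 2^{j(n-1)}$), and \eqref{eqxx} gives $|\langle\psi_{i,j,k},\cx\rangle|\ls 2^{-jn/2}$. Everything therefore reduces to estimating
\[
\Sigma := \sum_{j=0}^\infty 2^{-jn/2}\sum_{k\in\Omega_j}|\lambda_{i,j,k}| \le C\|f\|_{\bt}.
\]

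\textbf{Cases (i) and (iii).} In case (i), with $p\in[1,\infty]$, $s=1/p-1$, $q=1$, I would apply H\"older to the inner sum (using $|\Omega_j|\asymp 2^{j(n-1)}$) together with the $\ell^p$-bound $(\sum_{k:Q_{j,k}\subset P}|\lambda_{i,j,k}|^p)^{1/p}\ls 2^{-j(s+n/2-n/p)}\|f\|_{\bt}$ extracted from Proposition \ref{wav-type2} at the fixed bounded cube $P$ above (so $|P|^\tau\asymp 1$). A direct exponent check shows that the choice $s=1/p-1$ is precisely where $\Sigma$ reduces to $C\sum_j 2^{j(s+n/2-n/p)}(\sum_k|\lambda_{i,j,k}|^p)^{1/p}$, which is controlled by $\|f\|_{B^{s,\tau}_{p,1}}$ through the $\ell^1$-part of the wavelet norm. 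In case (iii), with $p\in(0,1)$, $\tau=0$, $s=n(1/p-1)$, $q=1$, the embedding $\ell^p\hookrightarrow\ell^1$ (available since $p\le 1$) replaces H\"older, and the critical $s$ again makes all $2^j$ factors cancel, reducing $\Sigma$ to an $\ell^1$-sum in $j$ handled by $q=1$.

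\textbf{Case (ii) is the main obstacle.} For $p\in(0,1)$, $\tau\in(0,(n-1)/(np)]$, $s=(1-\tau p)n(1/p-1)$, $q\in(0,p]$, the bounded-cube argument of case (iii) breaks down: at the critical $s$ the per-level bound carries a divergent factor $2^{jn\tau(1-p)}$. To absorb this, I would exploit the $|P|^\tau=2^{-j_0 n\tau}$ gain coming from applying Proposition \ref{wav-type2} at small dyadic boundary cubes $P=Q_{j_0,k_0}$, decomposing $\Omega_j$ via such cubes at a level $j_0=j_0(j)\le j$ chosen to balance exponents. The embedding $\ell^p\hookrightarrow\ell^1$ handles the spatial sum inside each piece, and $\ell^q\hookrightarrow\ell^1$ (valid for $q\le 1$) the final sum in $j$, while the constraint $q\le p$ is precisely the ``budget'' needed to interchange the spatial $\ell^p$-sums over pieces with the frequency $\ell^q$-sums over $j$ without loss (via the subadditivity $(\sum y_{k_0})^{q/p}\le\sum y_{k_0}^{q/p}$ valid for $q/p\le 1$). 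This delicate multi-scale bookkeeping at the critical line is the main technical difficulty and is what distinguishes case (ii) from (i) and (iii). Once $\Sigma<\infty$ is established in all three cases, the same estimate applied to the tails $\sum_{j>N}$ shows that $\{\langle S_N f,\cx\rangle\}_N$ is Cauchy, giving a continuous extension $\langle f,\cx\rangle:=\lim_N\langle S_N f,\cx\rangle$ on $\bt$; for $f\in\bt\cap L^r(\rn)$, convergence of $S_N f$ to $f$ in $L^r$ (or at least weakly, depending on $r$) identifies this extension with $\int_\rn f\,\cx$, and hence also yields independence from the chosen wavelet basis.
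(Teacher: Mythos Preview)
Your treatment of cases (i) and (iii) is correct. Case (i) is essentially the paper's own argument: expand in wavelets, use the moment condition to restrict to boundary indices $\Omega_j$ of cardinality $\asymp 2^{j(n-1)}$, estimate $|\langle\psi_{i,j,k},\cx\rangle|\ls 2^{-jn/2}$, apply H\"older in $k$, and recognize the resulting $\ell^1$-sum in $j$ as the wavelet norm with $q=1$. For case (iii) the paper takes a different route, namely the embedding chain $B^{n/p-n}_{p,1}(\rn)\hookrightarrow B^0_{1,1}(\rn)\hookrightarrow L^1(\rn)$, but your direct wavelet computation (using $\ell^p\hookrightarrow\ell^1$ for $p<1$ in place of H\"older) is a perfectly valid and arguably cleaner alternative: at $\tau=0$, $s=n(1/p-1)$ one has $s+n/2-n/p=-n/2$, so the exponents match exactly.

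Case (ii) is where your proposal has a real gap, and here the paper's argument is genuinely different. At the critical value $s=(1-p\tau)n(1/p-1)$ with $\tau>0$ one computes
\[
s+\tfrac n2-\tfrac np = -\tfrac n2 - n\tau(1-p) < -\tfrac n2.
\]
Thus for \emph{any} dyadic cube $P$, the wavelet norm (Proposition \ref{wav-type2}) controls only sums of the form $\sum_{j\ge j_P}2^{j\beta q}(\,\cdot\,)^{q/p}$ with $\beta=-n/2-n\tau(1-p)$, whereas after your reductions $\Sigma$ requires the strictly larger $j$-weight $2^{-jn/2}$. Passing to small boundary cubes $Q_{j_0,k_0}$ changes the spatial summation range and contributes a prefactor $|P|^\tau=2^{-j_0n\tau}$, but it does \emph{not} alter the exponent $\beta$ in $j$; summing the per-cube bounds over the $\asymp 2^{j_0(n-1)}$ boundary cubes and using the subadditivity $(\sum_{k_0}y_{k_0,j})^{q/p}\le\sum_{k_0}y_{k_0,j}^{q/p}$ still leaves you comparing $2^{-jnq/2}$ against $2^{j\beta q}$, which fails. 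In short, no choice of $j_0(j)$ can repair a mismatch that lives in the $j$-direction. The paper sidesteps this entirely: it reduces (via smooth pointwise multipliers) to $\supp f\subset[-1,2]^n$ and then invokes the embedding $B^{s,\tau}_{p,q}(\rn)\hookrightarrow L^1_{\loc}(\rn)$ for these parameters, due to Haroske, Moura and Skrzypczak \cite[Theorem~3.8(i)]{HMS16}, which yields $|\langle f,\cx\rangle|\le\int_{[-1,2]^n}|f|\le c\|f\|_{\bt}$ directly.
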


\begin{proof}
\noindent{\bf Step 1)} Proof of (i).
Let $ p \in[1,\infty]$.
With $P := [0,1]^n$, we find
\[
\langle S_Nf , \cx  \rangle =
\sum_{k\in J_P}\, \langle f,\phi_{0,k} \rangle \, \langle \phi_{0,k}, \cx \rangle + \sum_{i=1}^{2^n-1} \sum_{j=0}^N
\sum_{k\in I_{P,j}}\, \langle f, \psi_{i,j,k} \rangle\, \langle \psi_{i,j,k}, \cx\rangle \, ,
\]
where $J_P$ and $I_{P,j}$ are, respectively, as in  \eqref{menge1} and \eqref{menge2} with $Q$ replaced by $P$.
Now we look for sufficient conditions guaranteeing the existence of its limit as $N\to\fz$.
Clearly, the sum
$\sum_{k\in J_P}\, \langle f,\phi_{0,k} \rangle \, \langle \phi_{0,k}, \cx \rangle$ is a well-defined complex number.
Furthermore, because of the moment condition \eqref{moment}, we conclude that
\[
 \langle \psi_{i,j,k}, \cx\rangle = 2^{-jn/2}\, \int_{[0,2^{j}]^n}  \psi_{i} (x -k) \, dx = 0
\]
possibly except  those cases where
\begin{align}\label{ww-01}
\lf| \supp \psi_{i} (\, \cdot\,  -k) \cap [0,2^j]^n \right| \, \cdot\, \lf|\supp \psi_{i} (\, \cdot \,  -k)
\cap \lf(\rn \setminus [0,2^j]^n \right)\right| >0\, .
\end{align}
Let us denote the set of all such $k$ satisfying \eqref{ww-01} by $K_{i,j}$. Because of the compact support of our
generators of the wavelet system,
there exists  a finite positive constant
$c_1$, independent of $j \in \nn_0$, such that the cardinality of  $K_{i,j}$ is bounded by $c_1 2^{j(n-1)}$.
We fix a positive constant $c_2$ such that
\[
 c_2 \ge \max _{k \in \zz^n} \, \lf|\int_{[0,2^{j}]^n}  \psi_{i} (x -k) \, dx\right|,\quad \forall j\in\nn_0.
\]
Using these observations, we obtain
\begin{align}\label{estima}
\lf|
 \sum_{i=1}^{2^n-1} \sum_{j=0}^N
\sum_{k\in I_{P,j}}\, \langle f, \psi_{i,j,k} \rangle\, \langle \psi_{i,j,k}, \cx\rangle\right| \le
c_2 \,  \sum_{i=1}^{2^n-1} \sum_{j=0}^N  2^{-jn/2}\,
\sum_{k\in K_{i,j}}\, |\langle f, \psi_{i,j,k} \rangle| \, .
\end{align}
From the H\"older inequality for $p>1$, it follows that
\begin{align}\label{ww-02}
&\lf|
 \sum_{i=1}^{2^n-1} \sum_{j=0}^N
\sum_{k\in I_{P,j}}\, \langle f, \psi_{i,j,k} \rangle\, \langle \psi_{i,j,k}, \cx\rangle\right| \\
&\quad \le  c_1^{1/p'}\,
c_2 \, \sum_{i=1}^{2^n-1} \sum_{j=0}^N   2^{-jn/2}\, 2^{j(n-1)/p'} \, \lf(\sum_{k\in K_{i,j}}\, |\langle f, \psi_{i,j,k} \rangle|^p \right)^{1/p}.\nonumber
\end{align}
Observe that, if $k\in K_{i,j}$, then, because of the compact supports of the wavelets, there exists a natural number $D$, independent of $i$ and $j$, such that
\[
 Q_{j,k} \subset \bigcup_{|m|\le D} Q_{0,m}\, .
\]
This implies that
\begin{align*}
\sum_{i=1}^{2^n-1}&\sum_{j=0}^N   2^{-jn/2}\, 2^{j(n-1)/p'} \,
\lf(\sum_{k\in K_{i,j}}\, |\langle f, \psi_{i,j,k} \rangle|^p \right)^{1/p}\\
& \le
c_3 \, \max_{|m|\le D} \sum_{i=1}^{2^n-1} \sum_{j=0}^N   2^{-jn/2}\, 2^{j(n-1)/p'} \, \lf(\sum_{k: ~ Q_{j,k} \subset Q_{0,m}}\, |\langle f, \psi_{i,j,k} \rangle|^p \right)^{1/p}
\le  c_4 \, \|f\|_{B^{\frac 1p - 1, \tau}_{p,1} (\rn)};
\end{align*}
see Proposition \ref{wav-type2}. Thus, if $p\in[1, \infty]$,  the limit
$\lim_{N \to \infty}\, \langle S_Nf , \cx  \rangle$
exists for any $f \in B^{\frac 1p - 1, \tau}_{p,1} (\rn)$.

{\bf Step 2)}  Proof of (ii). Let $p\in(0, 1)$.
Obviously $f \mapsto \langle f , \cx  \rangle$  makes sense for any $f \in  L^1(\rn)$. Thus, to show $\langle f, \cx\rangle$
can be extended to $\bt$, it suffices to prove the embedding $\bt\subset L^1(\rn)$.
It is also clear that we may assume $\supp f \subset [-1,2]^n$,
because smooth functions with compact supports are pointwise multipliers on $\bt$ (see \cite[Theorem 6.1]{ysy}).

Now we prove that we will obtain a sufficient condition
by studying the embedding $\bt \hookrightarrow L^{1}(\rn)$.
In \cite[Theorem~3.8(i)]{HMS16},
Haroske et al. showed that $\bt \subset L^1_{\loc} (\rn)$ when $p\in(0, 1)$,
$s= (1-p \, \tau )n(\frac 1p - 1)$ and $q\in (0,p]$.
Looking into the details of their proof, we find that one can sharpen their result as follows:
there exists a positive constant $c$ such that
\[
 \int_{[-1,2]^n} |f(x)|\, dx \le c \, \|f\|_{\bt}
\]
for any function $f\in \bt$ with support contained in $[-1,2]^n$.  Thus,
$\langle f , \cx  \rangle$  makes sense for any $f \in \bt$ when $p\in(0, 1)$,
$s= (1-p \, \tau )n(\frac 1p - 1)$ and $q\in (0,p]$.

{\bf Step 3)} Proof of (iii).
We argue as in Step 2) but using
the continuous embeddings
\[
B^{\frac np - n,0}_{p,1} (\rn) = B^{\frac np - n}_{p,1} (\rn) \hookrightarrow B^{0,0}_{1,1} (\rn)= B^{0}_{1,1} (\rn)
\hookrightarrow L^1(\rn)\, , \qquad \forall\,p\in(0,1)\, ;
\]
see, for instance, \cite{ST}.
The proof of Theorem \ref{limit2} is then complete.
\end{proof}

We now consider another variant of extending  $f \mapsto \langle f , \cx  \rangle$  to $B^{s, \tau}_{p,q} (\rn)$.

\begin{theorem}\label{limit2-}
Let $s\in\rr$, $p,\,q\in(0, \infty]$ and $\tau \in [0,\infty)$.
If $s>n/p-n\tau-1$,
then $f \mapsto \langle f , \cx  \rangle$  extends to a continuous linear functional on $B^{s, \tau}_{p,q} (\rn)$
which coincides on $B^{s, \tau}_{p,q} (\rn) \cap L^r(\rn)$, $r \in[1, \infty]$, with $\int_\rn f (x)  \cx (x) \, dx$.
\end{theorem}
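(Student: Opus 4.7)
The plan is to use the wavelet characterization (Proposition \ref{wav-type2}) combined with a scale-by-scale estimate on the boundary wavelet coefficients of $\cx$, reducing first to the case $q=\infty$ via the embedding $B^{s,\tau}_{p,q}(\rn)\hookrightarrow B^{s,\tau}_{p,\infty}(\rn)$ from Remark \ref{grund}(iii). For the classical regime $\tau>1/p$ one may alternatively appeal to Remark \ref{grund}(iv) to reduce to a Besov space $B^{s+n(\tau-1/p)}_{\infty,\infty}(\rn)$, where the hypothesis becomes $s+n(\tau-1/p)>-1$.

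Fix an admissible orthonormal Daubechies wavelet system for $B^{s,\tau}_{p,\infty}(\rn)$ and define $\langle f,\cx\rangle:=\lim_{N\to\infty}\langle S_N f,\cx\rangle$, where $S_N f$ is the partial sum of the wavelet expansion of $f$ up to level $N$. By the moment conditions in \eqref{moment}, the only nonzero contributions to $\langle S_N f,\cx\rangle$ come from the finitely many scaling functions $\phi_{0,k}$ meeting $[0,1]^n$, and from those wavelets $\psi_{i,j,k}$ whose supports straddle the boundary $\partial[0,1]^n$. Exactly as in the proof of Theorem \ref{limit2}(i), the index set $K_{i,j}$ of such boundary wavelets satisfies $|K_{i,j}|\ls 2^{j(n-1)}$, and each of them obeys $|\langle\psi_{i,j,k},\cx\rangle|\ls 2^{-jn/2}$.

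The crucial new ingredient is a scale-sensitive bound on the coefficients of $f$. Testing the wavelet quasi-norm in Proposition \ref{wav-type2} against the single dyadic cube $P=Q_{j,k}$, the inner sum over $\{m:Q_{j,m}\subset P\}$ reduces to the single term $|\langle f,\psi_{i,j,k}\rangle|^p$; combined with $|P|^{-\tau}=2^{jn\tau}$ this yields
\[
|\langle f,\psi_{i,j,k}\rangle|\ls 2^{-j(s+n/2-n/p+n\tau)}\,\|f\|_{B^{s,\tau}_{p,\infty}(\rn)}.
\]
Multiplying by the three elementary bounds above and summing in $k$, $i$ and $j$ gives
\[
\sum_{j=0}^\infty\sum_{i=1}^{2^n-1}\sum_{k\in K_{i,j}}|\langle f,\psi_{i,j,k}\rangle|\,|\langle\psi_{i,j,k},\cx\rangle|\ls \|f\|_{B^{s,\tau}_{p,\infty}(\rn)}\sum_{j=0}^\infty 2^{j(n/p-n\tau-1-s)},
\]
and the geometric series converges precisely under the hypothesis $s>n/p-n\tau-1$. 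This establishes both the existence of the limit defining $\langle f,\cx\rangle$ as an absolutely convergent series and the continuity estimate, first on $B^{s,\tau}_{p,\infty}(\rn)$ and, via the reduction step, on all $B^{s,\tau}_{p,q}(\rn)$.

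To identify the functional with $f\mapsto\int_\rn f(x)\cx(x)\,dx$ on $B^{s,\tau}_{p,q}(\rn)\cap L^r(\rn)$, one uses that for $r\in(1,\infty)$ the Daubechies wavelet partial sums $S_N f$ converge to $f$ in $L^r(\rn)$, so $\int S_N f\cdot\cx\to\int f\cdot\cx$ by dominated convergence together with $\cx\in L^{r'}(\rn)$; the endpoint cases $r=1$ and $r=\infty$ follow by a smooth truncation/mollification argument, since on compactly supported test functions both definitions trivially coincide. The main obstacle is the scale-dependent choice of test cube $P=Q_{j,k}$ in the wavelet norm: it is exactly this choice that couples the Morrey parameter $\tau$ to the regularity threshold $s>n/p-n\tau-1$ and furnishes a uniform argument across the full range $\tau\in[0,\infty)$ and $q\in(0,\infty]$.
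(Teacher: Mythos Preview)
Your argument is correct and follows essentially the same route as the paper: both proofs start from the boundary-wavelet decomposition used in Theorem~\ref{limit2}(i), invoke the coefficient bound $|\langle f,\psi_{i,j,k}\rangle|\ls 2^{-j(s+n\tau+n/2-n/p)}\|f\|_{\bt}$ obtained by testing the wavelet norm on the single cube $P=Q_{j,k}$, and then sum the resulting geometric series. Your preliminary reduction to $q=\infty$ is a harmless cosmetic addition, since the single-cube/single-level test is insensitive to $q$ anyway.
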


\begin{proof}
We only need to modify the proof of Theorem \ref{limit2}
after \eqref{estima}. Using
Proposition \ref{wav-type2}, we know that
\[
 |\langle f, \psi_{i,j,k} \rangle|\ls 2^{-j(s+n\tau+\frac n2-\frac np)}\|f\|_{\bt}
\]
holds true for any $i\in\{1,\ldots,2^n-1\}$, $j\in\nn_0$, $k\in\zz^n$ and $f \in \bt$.
Using  \eqref{estima}, we conclude, with $s>n/p-n\tau-1$, that
\begin{align*}
&\lf|
 \sum_{i=1}^{2^n-1} \sum_{j=0}^N
\sum_{k\in I_{P,j}}\, \langle f, \psi_{i,j,k} \rangle\, \langle \psi_{i,j,k}, \cx\rangle\right|\\
&\quad\ls  \sum_{i=1}^{2^n-1} \sum_{j=0}^N  2^{-jn/2}\,
 2^{j(n-1)} 2^{-j(s+n\tau+\frac n2-\frac np)}\|f\|_{\bt}\ls \|f\|_{\bt},
\end{align*}
which completes the proof of Theorem \ref{limit2-}.
\end{proof}

%&&&&&&&&&&&&&&&&&&&&&&&&&&&&&&&&&&&&&&&&&&&&&&&&&&&&&&&&&&&&&&&&&&&&&&&&&&&&&&&&&&
%&&&&&&&&&&&&&&&&&&&&&&&&&&&&&&&&&&&&&&&&&&&&&&&&&&&&&&&&&&&&&&&&&&&&&&&&&&&&&&&&&&&

\subsection{Necessary conditions for the existence of $ \langle f , \cx  \rangle$\label{s5.2}}

%&&&&&&&&&&&&&&&&&&&&&&&&&&&&&&&&&&&&&&&&&&&&&&&&&&&&&&&&&&&&&&&&&&&&&&&&&&&&&&&&&&
%&&&&&&&&&&&&&&&&&&&&&&&&&&&&&&&&&&&&&&&&&&&&&&&&&&&&&&&&&&&&&&&&&&&&&&&&&&&&&&&&&&&

Next we turn to negative results concerning the existence of the limit in \eqref{limit1}.
Therefore we will construct several  families of  test functions.

To prove these negative statements we need several specific properties of the generators of our
wavelet system.
First, we require to work with a wavelet system which is admissible in the sense of Proposition \ref{wav-type2}
for $B^{s,\tau}_{p,q} (\rn)$ with $p$, $q$, $\tau$ fixed and
\[
\min\lf\{\frac np - n \tau -1, \max \lf( n\lf[ \frac 1p-1\right], \frac 1p-1\right)\right\}-1 \le s
\le \min \lf\{\frac 1p, n\lf(\frac 1p - \tau\right)\right\} + 1\, .
\]
In addition we require that it will be of Daubechies type (see \cite[4.1,~4.2]{woj}).
Let $\widetilde{\phi}$ be a scaling function and $\widetilde{\psi}$ an associated wavelet (both on $\rr$).
Here we have the possibility to work also with a shift of these two functions without changing the wavelet system.
We claim the following: we may choose a generator $\widetilde{\psi} \in C^{N_1}(\rr)$ such that:
\begin{itemize}
 \item[(a)] There exist integers $K<0$ and $L>0$ such that
 \[
  \supp \widetilde{\psi} \subset[K,L]\, .
 \]
\item[(b)] There exists a natural number $j_0$ such that $2^{-j_0}L \le 1$ and
\begin{align}\label{intpsi}
 \int_0^1 \widetilde{\psi} (2^{j_0}t)\, dt \neq 0\, .
\end{align}
\end{itemize}
Part (a) can be found in \cite[4.1]{woj}.
To show (b), we argue by contradiction, namely, we assume that, for any generator
$\widetilde{\psi} (\, \cdot \, - m)$, $m \in \zz$, of this wavelet system and for any $j \in \nn$ satisfying $L\le 2^j$,
we have
\[
 \int_0^1 \widetilde{\psi} (2^{j}t-m)\, dt =  0\,  .
\]
Then, in the one-dimensional  case, it follows
\[
\cx   =  \sum_{k\in\zz}\, \langle \cx, \widetilde\phi_{0,k} \rangle \, \widetilde\phi_{0,k} +  \sum_{j=0}^{j_0-1}
\sum_{k\in\zz}\, \langle \cx,\widetilde\psi_{j,k} \rangle\, \widetilde\psi_{j,k}\, ,
\]
where $j_0$ is as in \eqref{intpsi}, $\widetilde\phi_{0,k}(x):=\widetilde\phi(x-k)$ and $\widetilde\psi_{j,k}(x):=2^{j/2}\widetilde\psi(2^jx-k)$
for any $x\in\rr$, $j\in\nn_0$ and $k\in\zz$.
Since the summations on the right-hand side of the above formula are all finite, we conclude by  Proposition \ref{wav-type2}
that $\cx$ belongs to $B^{s,\tau}_{p,q} (\rr)$ for any $s \le \frac 1p - \tau + 1 $.
But this is in contradiction with Theorem \ref{charact}.
Thus, \eqref{intpsi} holds for some $j_0$. Without loss of  generality, we may assume that the integral in \eqref{intpsi}
 is \emph{positive}.

In addition we choose the  scaling function $\widetilde{\phi}\in C^{N_1}(\rr)$ (again the degree of freedom we have is the shift)
such that $\supp\wz{\phi}\subset [0, \widetilde{L}]$ with $\widetilde{L} := L-K$ and
\[
\int_{0}^{\widetilde{L}} \wz \phi (t)\, dt  > 0
\]
(see \cite[4.1,~4.2]{woj}).
Let
\begin{align}\label{wapsi1}
\psi_1 (x) := \widetilde{\psi}(x_1)  \widetilde{\phi}(x_2) \cdots   \widetilde{\phi}(x_n)
\, , \qquad \forall\,x:= (x_1, x_2,\ldots , x_n) \in \rn\, .
\end{align}
Let $j_0$ be defined as in (b) and let $\{\lambda_\ell\}_{\ell=j_0}^\infty$
be a given sequence  of real numbers.
We define  our first family of test functions as
\begin{align}\label{test}
f_N(x) :=   \sum_{\ell=j_0}^N  \lambda_{\ell} \sum_{\gfz{k=(0,k_2, \ldots , k_n)}
{0 \le k_i < 2^{\ell} - \widetilde{L}, ~ i\in\{2, \ldots, n\}
}} \, \psi_{1,\ell,k} (x) \, , \quad \forall\,x \in \rn, \quad \forall\, N \in \nn\cap [j_0,\fz),
\end{align}
where $\psi_{1,\ell,k}$ is as in \eqref{3.4x} with $i=1$ and $j=\ell$.
Clearly, $f_N$ is  as smooth as the elements of the wavelet system and has compact support.
More exactly, the support is concentrated near a part of the boundary of $[0,1]^n$.
For us important are the following estimates of   $\|f_N\|_{\bt}$.

\begin{lemma}\label{limit3}
Let $s\in\rr$, $p,\,q\in(0,\infty]$ and  $\tau \in[0,\infty)$.
Then, for any $N\in\nn\cap [j_0,\fz)$,
\begin{align}\label{limit4}
 \|f_N \|_{B^{s, \tau}_{p,q} (\rn)}  \asymp  \left\{\begin{array}{lll}
\left\{\sum\limits_{j  = j_0}^{N}
2^{j (s+\frac n2-\frac 1p)q} \, |\lambda_{j}|^q \right\}^{1/q} && \mbox{if}\  \tau \in\Big[0, \frac{n-1}{n} \, \frac 1p\Big],
\\
\displaystyle\sup_{J\in\{j_0, \ldots , N\}}\, 2^{J(n\tau - \frac{n-1}{p})} \,
\left\{\sum\limits_{j  = J}^{N}
2^{j (s+\frac n2-\frac 1p)q} \, |\lambda_{j}|^q \right\}^{1/q} &&\mbox{if}\  \tau\in\lf(\frac{n-1}{n} \, \frac 1p,\fz\right)
\end{array}\right.
\end{align}
with the positive  equivalence constants independent of $N$ and $\{\lambda_j\}_{j=j_0}^\fz$.
\end{lemma}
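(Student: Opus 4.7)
The plan is to apply the wavelet characterization of Proposition \ref{wav-type2} directly, since $f_N$ is already expressed as a (finite) sum in the admissible Daubechies system. I assume the system is admissible for $B^{s,\tau}_{p,q}(\rn)$ in the relevant parameter range (which for our applications can always be arranged by choosing $N_1$ large enough).

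First, by orthonormality of the wavelet basis, the coefficients of $f_N$ are $\langle f_N,\psi_{1,\ell,k}\rangle=\lambda_\ell$ whenever $j_0\le \ell\le N$ and $k=(0,k_2,\dots,k_n)$ with $0\le k_i<2^\ell-\widetilde{L}$ for $i\in\{2,\ldots,n\}$, and all remaining wavelet and scaling coefficients vanish; in particular $\langle f_N,\phi_{0,k}\rangle=0$ for every $k\in\zz^n$. By Proposition \ref{wav-type2}, therefore,
\[
\|f_N\|_{\bt}\asymp \sup_{P\in\cq}\frac{1}{|P|^\tau}\left\{\sum_{j=\max\{j_P,j_0\}}^{N}2^{j(s+\frac n2-\frac np)q}\bigl[N_j(P)\,|\lambda_j|^p\bigr]^{q/p}\right\}^{1/q},
\]
where $N_j(P)$ is the cardinality of admissible indices $k$ with $Q_{j,k}\subset P$.

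Next, I would perform the combinatorial count of $N_j(P)$ using the compactness of $\supp\psi_1$. The support of $f_N$ lies in a slab of the form $[K2^{-j_0},L2^{-j_0}]\times[0,1]^{n-1}$, adjacent to the face $\{x_1=0\}$ of the unit cube. Writing $P=Q_{J,m}$, the constraint $k_1=0$ combined with $Q_{j,k}\subset P$ forces $m_1=0$ (for $J\ge 0$), and then the free coordinates $k_2,\ldots,k_n$ must lie in $[2^{j-J}m_i,2^{j-J}(m_i+1))\cap[0,2^j-\widetilde{L})$; this gives at most $2^{(j-J)(n-1)}$ choices, and exactly of this order when $P=Q_{J,0}$. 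For $J<0$ the count is bounded instead by the total slab count $\asymp 2^{j(n-1)}$, which corresponds to the value obtained at $J=0$. Hence it suffices to take the supremum over $P=Q_{J,0}$ with $J\ge 0$, and substituting $N_j(Q_{J,0})\asymp 2^{(j-J)(n-1)}$ together with $|Q_{J,0}|^\tau=2^{-Jn\tau}$ produces, after collecting the exponent $s+\frac n2-\frac np+\frac{n-1}{p}=s+\frac n2-\frac 1p$,
\[
\|f_N\|_{\bt}\asymp \sup_{J\ge 0,\,J\le N}2^{J(n\tau-\frac{n-1}{p})}\left\{\sum_{j=\max\{J,j_0\}}^{N}2^{j(s+\frac n2-\frac 1p)q}|\lambda_j|^q\right\}^{1/q}.
\]

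Finally, the two cases of \eqref{limit4} follow by elementary monotonicity in $J$. When $\tau\le\frac{n-1}{np}$ the exponent $n\tau-\frac{n-1}{p}\le 0$, so the prefactor is non-increasing in $J$ while the inner sum is also non-increasing; the supremum is thus attained at $J=0$ (equivalently at any $J\le j_0$, since $\lambda_j=0$ for $j<j_0$), yielding the first line of \eqref{limit4}. When $\tau>\frac{n-1}{np}$ the prefactor is strictly increasing in $J$ and competes with the shrinking sum, so no collapse occurs and the supremum must be retained; since for $J<j_0$ the inner sum is unchanged while the prefactor is strictly smaller, the sup can be restricted to $J\in\{j_0,\ldots,N\}$, yielding the second line.

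The main obstacle is the geometric/combinatorial step: establishing that $P=Q_{J,0}$ extremizes the wavelet-norm expression among all dyadic cubes and that the count $N_j(P)\asymp 2^{(j-J)(n-1)}$ is genuinely sharp (both upper and lower bound). This requires a careful bookkeeping of the slab-shaped support of $f_N$ against arbitrary dyadic positions and scales, using only that $\widetilde{\psi}$ and $\widetilde{\phi}$ have compact support of controlled size.
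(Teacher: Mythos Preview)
Your proposal is correct and follows essentially the same route as the paper's proof: apply Proposition~\ref{wav-type2}, use orthonormality to read off the coefficients, reduce the supremum over $P\in\cq$ to cubes $Q_{J,0}$ via the combinatorial count $N_j(Q_{J,0})\asymp 2^{(j-J)(n-1)}$, and then split according to the sign of $n\tau-\frac{n-1}{p}$. The paper organizes the reduction slightly differently---it explicitly separates the ranges $J\in\{0,\ldots,j_0-1\}$ and $J\in\{j_0,\ldots,N\}$ into two pieces $S^+$ and $S^-$ before comparing them---but this is only a cosmetic difference from your single expression with $\max\{J,j_0\}$ in the summation limit.
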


\begin{proof}
To estimate  $\|f\|_{B^{s,\tau}_{p,q} (\rn)}$, we proceed by using  Proposition \ref{wav-type2}.
Here the following observations will be applied:
\begin{itemize}
 \item All $Q_{j,k}$,  associated to a non-zero coefficient $\lambda_j$, are subsets of $[0,1]^n$.
Thus, it will be enough to consider $P \subset Q_{0,0}:= [0,1]^n$.
\item If $P:= Q_{j,k} \subset [0,1]^n$, then it is enough to consider those $k$
such that $P \cap \partial [0,1]^n \neq \emptyset$, where $\partial[0,1]^n$ denotes the \emph{boundary} of $[0,1]^n$.

\item For $J\in \nn_0$ fixed, the cube $ Q_{J,0}$ leads to the largest contribution, more exactly,
\[
 \max_{k \in \zz^n} \, \sum_{\{m:~ Q_{j,m} \subset Q_{J,k}\}} |\langle f_N,\,\psi_{i,j,m}\rangle|^p\le  \sum_{\{m:~ Q_{j,m} \subset Q_{J,0}\}} |\langle f_N,\,\psi_{i,j,m}\rangle|^p\, .
\]
\item If $J > N $, then there exists no cube $ Q_{j,m}$, associated to a non-zero coefficient and  contained in $P:= Q_{J,k}$.
\end{itemize}

Then, by the orthogonality of the wavelet system, we have
\begin{align*}
\|f_N \|_{B^{s, \tau}_{p,q} (\rn)}
&\asymp   \sup_{P\in\mathcal{Q}}\frac1{|P|^{\tau}}
\,
\left\{\sum_{j  =  \max \{j_P,j_0\}}^{N}
2^{j(s+\frac n2-\frac np)q} \,  \lf( \sum_{\{\{m:~ Q_{j,m} \subset P\}\}} |\langle f_N,\,\psi_{1,j,m}\rangle|^p\right)^{q/p}\right\}^{1/q}
\\
& \asymp    \sup_{J\in\{j_0,\ldots, N\}}\, 2^{Jn\tau} \,
\left\{\sum_{j  =  J}^{N}
2^{j(s+\frac n2-\frac np)q} \,  \lf( \sum_{\{m:~ Q_{j,m} \subset Q_{J,0}\}} |\langle f_N,\,\psi_{1,j,m}\rangle|^p\right)^{q/p}\right\}^{1/q}
\\
& \quad +     \sup_{J\in\{0,\ldots, j_0 -1\}}\, 2^{Jn\tau} \,
\left\{\sum_{j  =  j_0}^{N}
2^{j(s+\frac n2-\frac np)q} \,  \lf( \sum_{\{m:~ Q_{j,m} \subset Q_{J,0}\}} |\langle f_N,\,\psi_{1,j,m}\rangle|^p\right)^{q/p}\right\}^{1/q}
\\
&=: S^- + S^+ \, .
\end{align*}
Checking the number of cubes $Q_{j,m} \subset Q_{J,0}$ with $m:= (0,m_2, \ldots \, , m_n)$, we find that
\begin{align*}
S^+ & \asymp
\sup_{J\in\{0,\ldots, j_0 -1\}}\, 2^{Jn\tau} \,
\left\{\sum_{j  =  j_0}^{N}
2^{j(s+\frac n2-\frac np)q} \, |\lambda_{j}|^q 2^{(j - J)(n-1)q/p}\right\}^{1/q}
\\
& \asymp  \sup_{J\in\{0,\ldots, j_0 -1\}}\, 2^{J ( n\tau - \frac{n-1}{p})} \,
\left\{\sum_{j  =  j_0}^{N}
2^{j(s+\frac n2-\frac 1p)q} \, |\lambda_{j}|^q \right\}^{1/q} \, .
\end{align*}
Similarly,
\begin{align*}
S^- & \asymp
 \sup_{J\in\{j_0,\ldots, N\} }\, 2^{Jn\tau} \,
\left\{\sum_{j  =  J}^{N}
2^{j(s+\frac n2-\frac np)q} \,  |\lambda_{j}|^q 2^{(j - J)(n-1)q/p}\right\}^{1/q}
\\
& \asymp   \sup_{J\in\{j_0, \ldots, N\}}\, 2^{J(n\tau - \frac{n-1}{p})} \,
\left\{\sum_{j  = J}^{N}
2^{j (s+\frac n2-\frac 1p)q} \, |\lambda_{j}|^q \right\}^{1/q}
\, .
\end{align*}
Now we have to distinguish two cases: $0 \le \tau \le \frac{n-1}{n} \, \frac 1p$ and $\frac{n-1}{n} \, \frac 1p < \tau$.
In the first case we obtain

\[
 \|f_N \|_{B^{s, \tau}_{p,q} (\rn)} \asymp
\left\{\sum_{j  = j_0}^{N}
2^{j (s+\frac n2-\frac 1p)q} \, |\lambda_{j}|^q \right\}^{1/q}\, ,
\]
whereas in the second case we conclude that

\[
 \|f_N \|_{B^{s, \tau}_{p,q} (\rn)} \asymp
\sup_{J\in\{j_0, \ldots , N\}}\, 2^{J(n\tau - \frac{n-1}{p})} \,
\left\{\sum_{j  = J}^{N}
2^{j (s+\frac n2-\frac 1p)q} \, |\lambda_{j}|^q \right\}^{1/q} \, .
\]
This finishes the proof of Lemma \ref{limit3}.
\end{proof}

Now we turn to the calculation of  $\langle f_N, \cx\rangle$.

\begin{lemma}\label{scalar1}
For any $N\in \nn\cap [j_0,\fz)$, it holds
\begin{align}\label{test2}
\langle f_N, \cx\rangle
\asymp
 \sum_{j=j_0}^N  \lambda_{j} \,  2^{j( \frac n2 -1)}
\end{align}
with the positive equivalence  constants independent  of $N$ and $\{\lambda_j\}_{j=j_0}^\fz$, where $j_0$ is as in \eqref{intpsi}.
\end{lemma}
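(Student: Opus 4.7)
The plan is to compute $\langle f_N, \cx\rangle$ term by term, using the tensor-product structure of $\psi_1$ together with properties (a) and (b) arranged for the generator $\widetilde\psi$. By the definition of $f_N$ in \eqref{test} and linearity of the inner product,
\[
\langle f_N, \cx\rangle = \sum_{\ell=j_0}^N \lambda_\ell \sum_{\substack{k=(0,k_2,\ldots,k_n)\\ 0\le k_i <2^\ell - \widetilde L}} \langle \psi_{1,\ell,k}, \cx\rangle,
\]
so the whole task reduces to evaluating each scalar product $\langle \psi_{1,\ell,k}, \cx\rangle$ for $k = (0, k_2, \ldots, k_n)$ of the specified type, and then counting how many such $k$ contribute.

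First I would perform the change of variable $y = 2^\ell x$ in the integral over $[0,1]^n$. Using \eqref{wapsi1} this yields
\[
\langle \psi_{1,\ell,k}, \cx\rangle = 2^{-\ell n/2}\, \int_0^{2^\ell} \widetilde\psi(y_1)\, dy_1 \,\prod_{i=2}^n \int_{-k_i}^{2^\ell-k_i} \widetilde\phi(y_i)\, dy_i,
\]
where I have already used $k_1=0$. Now property (a) tells us $\supp \widetilde\psi \subset [K,L]$ with $K<0<L$, and since $\ell \ge j_0$ we have $2^\ell \ge 2^{j_0} \ge L$, so the first integral equals $\int_0^L \widetilde\psi(y)\,dy$, which is a nonzero constant independent of $\ell$; by property (b) and our sign normalization it is a positive constant $c_\psi$. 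For the factors with $i\ge 2$, from $\supp \widetilde\phi \subset [0,\widetilde L]$ and $0\le k_i < 2^\ell-\widetilde L$ we get $-k_i \le 0 < \widetilde L \le 2^\ell-k_i$, hence $\int_{-k_i}^{2^\ell-k_i}\widetilde\phi\, dy_i = \int_0^{\widetilde L}\widetilde\phi\, dy_i =: c_\phi > 0$, again independent of $\ell$ and $k_i$. Consequently each admissible scalar product equals $c_\psi\, c_\phi^{n-1}\, 2^{-\ell n/2}$.

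Finally I would count the index set: for each $\ell \ge j_0$ there are exactly $(2^\ell-\widetilde L)^{n-1}$ vectors $k=(0,k_2,\ldots,k_n)$ with $0\le k_i < 2^\ell-\widetilde L$, and for $\ell \ge j_0$ (possibly enlarging $j_0$ so that $2^{j_0} \ge 2\widetilde L$) we have $(2^\ell-\widetilde L)^{n-1} \asymp 2^{\ell(n-1)}$ with positive constants independent of $\ell$. Combining these, the inner double sum yields
\[
\sum_{\substack{k=(0,k_2,\ldots,k_n)\\ 0\le k_i <2^\ell - \widetilde L}} \langle \psi_{1,\ell,k}, \cx\rangle \asymp 2^{\ell(n-1)}\cdot 2^{-\ell n/2} = 2^{\ell(n/2-1)},
\]
and plugging this back gives the desired equivalence \eqref{test2}.

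No step is really hard once the generator has been arranged to satisfy (a) and (b); the only mild subtlety is confirming that the constants obtained do not depend on $\ell$ or on the individual $k_i$, which is where the upper bound $k_i < 2^\ell-\widetilde L$ in \eqref{test} plays its role — it guarantees that every $\widetilde\phi$-factor contributes the full integral $\int_0^{\widetilde L}\widetilde\phi$ rather than a truncated one, and therefore that the sign and magnitude of each summand are uniformly controlled.
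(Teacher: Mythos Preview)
Your proof is correct and follows essentially the same route as the paper's: expand $\langle f_N,\cx\rangle$ via the tensor-product form of $\psi_1$, use the support conditions on $\widetilde\psi$ and $\widetilde\phi$ together with the range $0\le k_i<2^\ell-\widetilde L$ to reduce each factor to a fixed positive constant, and then count the $\asymp 2^{\ell(n-1)}$ admissible $k$'s. The only small caveat is your parenthetical ``possibly enlarging $j_0$ so that $2^{j_0}\ge 2\widetilde L$'': since $j_0$ is fixed by \eqref{intpsi} in the statement, you should justify this by observing that once \eqref{intpsi} holds for some $j_0$ it holds for every larger index (because $\int_0^1\widetilde\psi(2^{j}t)\,dt=2^{-j}\int_0^L\widetilde\psi$ for all $j$ with $2^j\ge L$), so one may assume from the outset that $j_0$ has been chosen with $2^{j_0}>\widetilde L$.
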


\begin{proof}
By the definition of $f_N$ and the choice of $j_0$, we conclude that, for any $N\in\nn\cap[j_0,\fz)$,
\begin{align}\label{test1}
\langle f_N, \cx\rangle
& = \int_\rn \lf[\sum_{j=j_0}^N  \lambda_{j} \,
\sum_{\gfz{k=(0,k_2, \ldots, k_n)}{0 \le k_i < 2^{j}-\widetilde{L}, ~ i=2, \ldots \, , n}}
\psi_{1,j,k} (x)\right] \, \cx (x) \,dx\nonumber
\\
&  =    \sum_{j=j_0}^N  \lambda_{j} \, 2^{j n/2}\,  \int_0^1 \psi_{1}  (2^{j} x_1) \, dx_1
\sum_{\gfz{k=(0,k_2, \ldots \, , k_n)}{0 \le k_i < 2^{j} - \widetilde{L}, ~ i=2, \ldots, n}}
\prod_{i=2}^n \int_0^1 \widetilde{\phi}  (2^{j}x_i-k_i) \, dx_i
\nonumber
\\
& =   \sum_{j=j_0}^N  \lambda_{j} \, 2^{j n/2} \, \lf[\int_0^{2^{-j}L} \psi_{1}  (2^{j} t) \, dt\right]
\sum_{\gfz{k=(0,k_2, \ldots \, , k_n)}{0 \le k_i < 2^{j}- \widetilde{L}, ~ i=2, \ldots, n}} 2^{-j(n-1)}
\prod_{i=2}^n \int_{-k_i}^{2^j-k_i} \widetilde{\phi}  (x_i) \, dx_i\, .
\nonumber
\end{align}
Employing our assumption concerning the support of $\widetilde{\phi}$ and the definition of $j_0$,
we obtain, for any $N\in\nn\cap[j_0,\fz)$,
\begin{align*}
\langle f_N, \cx\rangle
& =  \sum_{j=j_0}^N  \lambda_{j} \, 2^{j n/2} \, \lf[\int_0^{2^{-j}L} \psi_{1}  (2^{j} t) \, dt\right]
\sum_{\gfz{k=(0,k_2, \ldots \, , k_n)}{0 \le k_i < 2^{j} - \widetilde{L}, ~ i=2, \ldots, n}} 2^{-j(n-1)}
\prod_{i=2}^n \int_{0}^{L} \widetilde{\phi}  (x_i) \, dx_i
\\
& \asymp  2^{j_0}
 \sum_{j=j_0}^N  \lambda_{j} \, \, 2^{j ( \frac n2 -1)} \, \lf[\int_0^{2^{-j_0}L} \psi_{1}  (2^{j_0} t) \, dt\right]
\asymp \sum_{j=j_0}^N  \lambda_{j} \,  2^{  j( \frac n2 -1)}.
\end{align*}
This proves the desired result of Lemma \ref{scalar1}.
\end{proof}

%&&&&&&&&&&&&&&&&&&&&&&&&& Second family &&&&&&&&&&&&&&&&&&&&&&&&&&&&&&&&&&&&&&&&&&&&&&&&&&&

To deal with the case   $p\in(0,1)$, we   need a second sequence of test functions.
Therefore we need a preparation.
Let $\alpha \in (0,n)$ and  $j \in \nn$.
We would like to distribute $\lfloor 2^{j\alpha} \rfloor$ dyadic cubes
$Q_{j,m}$  in  $Q_{0,0}:=[0,1)^n$ in a rather specific way (not uniformly). We claim that there exists
a set $A_j \subset \nn_0^n$ of cardinality $\lfloor 2^{j\alpha} \rfloor$
such that
\[
Q_{j,m} \subset [0,1]^n  \qquad \mbox{for any} \quad m \in A_j\, ,
\]
\begin{align}\label{vor1}
\max_{\gfz{K=(K_1, \ldots \, , K_n)}{0 \le K_i < 2^J, ~ i=1, \ldots \, , n}}
\sum_{\{m\in A_j:~Q_{j,m}\subset Q_{J,K}\}} 1 \le 2\,  \lf(\sum_{\{m\in A_j:~Q_{j,m}\subset Q_{J,0}\}} 1\right) \, , \qquad\forall\, J \in\{0,\ldots,j\},
\end{align}
and  there exists a positive constant $\wz C$, independent of $j \in \nn$, such that
\begin{align}\label{vor2}
 \, \lfloor 2^{(j-J) \alpha} \rfloor \le  \sum_{\{m\in A_j:~Q_{j,m}\subset Q_{J,0}\}} 1 \le \wz C \, 2^{(j-J)\alpha} \, ,  \qquad\forall\, J \in\{0,\ldots,j\}.
\end{align}

The construction of such an $A_j$ is rather easy.
Clearly
\[
 [0,1)^n = Q_{j,0} \cup \lf[\bigcup_{J=1}^{j} (Q_{J-1,0}\setminus Q_{J,0})\right]\, .
\]
We will define $A_j$ via first defining the subset  of $A_j$ in  each $Q_{J-1,0}\setminus Q_{J,0}$ with $J\in\{1, \ldots  ,j\}$.
First, we put $0$ into the set $A_j$, which means that $Q_{j,0}$ is chosen.
Inside $Q_{j-1,0}\setminus Q_{j,0}$, there exist $2^n-1$ dyadic cubes in $\cq_j$ and we need to select
$\lfloor 2^{\alpha}-1 \rfloor$. Which one we take is unimportant.
We proceed by induction with induction hypothesis
\begin{align}\label{estim}
\lfloor 2^{(j-J)\alpha} \rfloor \le \lf|\lf\{m \in A_j: ~~Q_{j,m} \subset Q_{J,0} \right\} \right| \le \wz C \,  2^{(j-J)\alpha} \, ,
\end{align}
where $\wz C \ge 1$ is a constant independent of $j$ and $J$ and  will be determined later.
It will be sufficient to look for the step from $J$ to $J-1$.
Of course there exist $2^n-1$ cubes $Q_{J,K}$ such that
\[
 Q_{J-1,0} = Q_{J,0} \cup \lf(\bigcup_{K} Q_{J,K}\right)\, .
\]
Altogether we have $(2^{n}-1)\, 2^{(j-J)n}$ cubes $Q_{j,m}$ in $Q_{J-1,0}\setminus Q_{J,0}$.
We decide for an almost uniform distribution, namely,
in each $Q_{J,K} \subset (Q_{J-1,0}\setminus Q_{J,0})$, we select
\[
\lf\lfloor \frac{2^{(j-J+1)\alpha}  -  2^{(j-J)\alpha}}{2^n-1}\right\rfloor + 1
\]
cubes $Q_{j,m}$.
This guarantees the lower bound  for
$|\{m \in A_j: ~~Q_{j,m} \subset Q_{J-1,0} \}|$ as in \eqref{estim} with $J$ replaced by $ J-1$.
Now we deal with the corresponding upper bound. Notice that
\begin{align*}
\wz C \,  2^{(j-J)\alpha} & +
 (2^n-1)\lf\{\lf\lfloor \frac{2^{(j-J+1)\alpha}  -  2^{(j-J)\alpha}}{2^n-1}\right\rfloor + 1 \right\}
\\
&\le
\wz C \,  2^{(j-J)\alpha}  +
 (2^\alpha-1)\,  2^{(j-J)\alpha} + 2^n -1 \, .
\end{align*}
With
\[
 \wz C\ge \frac{2^n + 2^\alpha -2}{2^\alpha -1}\,
\]
we conclude
\[
 \wz C \,  2^{(j-J)\alpha}  +
 (2^n-1)\lf(\lf\lfloor \frac{2^{(j-J+1)\alpha}  -  2^{(j-J)\alpha}}{2^n-1}\right\rfloor + 1 \right)
\le \wz C\,  2^{(j-J+1)\alpha}, \quad\forall\, J\in\{0,\ldots, j-1\},
\]
which implies that  $|\{m \in A_j: ~~Q_{j,m} \subset Q_{J-1,0} \}|$ satisfies the upper bound in \eqref{estim}
with $J$ replaced by $J-1$.

Since \eqref{estim} is just \eqref{vor2}, we know that
\eqref{vor2} is fulfilled by $A_j$ determined as above.
On another hand, since
\[
\lf\lfloor \frac{2^{(j-J+1)\alpha}  -  2^{(j-J)\alpha}}{2^n-1}\right\rfloor + 1 \le
\lfloor2^{(j-J)\alpha}\rfloor + 1 \le 2\, \lfloor 2^{(j-J)\alpha} \rfloor \, , \qquad \forall\,J\in\{0,\ldots,j\},
\]
one also finds  that \eqref{vor1} is fulfilled. This proves the previous claim. Let us mention that our construction was inspired
by a similar one in \cite[Proof of Theorem~3.1, Substep 2.4]{HS13}.

Below we need to indicate the dimension $n$ in which our construction took place.
So we will use the notation $A^{n}_j$ instead of $A_j$.

Now we are in position to introduce our second family of test functions.
For fixed $\az\in(0,n-1)$, $j_1 \ge j_0$ and a given sequence$\{\lambda_j\}_{j=j_1}^\infty$
of real numbers, we define
$$T_j:=
\lf\{ m:=(0,m_2, \ldots , m_n): ~(m_2, \ldots \, , m_n) \in A^{n-1}_j,\   \max\{m_2,\ldots,m_n\} <2^j-\widetilde{L} \right\}$$
for any $j\in \{j_1,j_1+1,\ldots\}$, and
\begin{align}\label{folge3}
 g_N (x):= \sum_{j=j_1}^N  \lambda_{j} \sum_{m\in T_j}
\psi_{1,j,m} (x), \quad \forall\,x \in \rn,\ \ \forall\,N\in\nn\cap[j_1,\fz),
\end{align}
which $A^{n-1}_j \subset \nn_0^{n-1}$ is a set constructed as  $A_j=A_j^n$ above but with $n$ replaced by $n-1$.

As $f_N$, the function $g_N$ is as smooth as the generators of the wavelet system and it has compact support.
We have the following estimate.

\begin{lemma}\label{limit6}
Let $s\in\rr$, $p,\,q\in(0,\infty]$ and  $\tau \in (0,\frac{n-1}{np})$. If $\az=np\tau$, then,
for any $N\in\nn\cap[j_1,\fz)$,
\begin{align}\label{limit7}
 \|g_N \|_{B^{s, \tau}_{p,q} (\rn)}  \asymp  \left\{\sum\limits_{j  = j_1}^{N}
2^{j (s+\frac n2 + n\tau -\frac np)q} \, |\lambda_{j}|^q \right\}^{1/q}
\end{align}
with the positive equivalence  constants independent of $N$ and $\{\lambda_j\}_{j=j_1}^\fz$, where $j_1\ge j_0$ and $j_0$
is as in \eqref{intpsi}.
\end{lemma}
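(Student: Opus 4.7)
My plan is to invoke the wavelet characterization of Proposition \ref{wav-type2} and reduce the computation of $\|g_N\|_{\bt}$ to a combinatorial count of subcubes drawn from the specific construction of $A_j^{n-1}$. Since $g_N$ is a finite wavelet expansion with only the generator $\psi_1$ active, its wavelet coefficients are given explicitly by $\langle g_N,\phi_{0,k}\rangle=0$, $\langle g_N,\psi_{i,j,m}\rangle=0$ whenever $i\ne 1$ or $m\notin T_j$, and $\langle g_N,\psi_{1,j,m}\rangle=\lambda_j$ for $m\in T_j$. Choosing a wavelet system of sufficiently high regularity to satisfy \eqref{eq-03} for the parameters $(s,\tau,p,q)$ at hand, Proposition \ref{wav-type2} therefore reduces the matter to estimating
\[
\sup_{P\in\cq}\frac{1}{|P|^\tau}\left\{\sum_{j=\max\{j_P,j_1\}}^{N}2^{j(s+\frac{n}{2}-\frac{n}{p})q}|\lambda_j|^q\,N_{P,j}^{q/p}\right\}^{1/q},
\]
where $N_{P,j}:=\left|\{m\in T_j:\,Q_{j,m}\subset P\}\right|$, with the usual modifications when $q=\fz$.

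The next step is to compute $N_{P,j}$. Because every $Q_{j,m}$ with $m\in T_j$ is contained in the face-slab $\{0\}\times[0,1)^{n-1}$, only dyadic cubes $P=Q_{J,K}$ with $K_1=0$ and $P\subset[0,1]^n$ contribute nontrivially (cubes with $\ell(P)>1$ are handled afterwards). For such $P$, the count $N_{P,j}$ equals the number of $(m_2,\ldots,m_n)\in A_j^{n-1}$ whose generation-$j$ cube in $\rr^{n-1}$ is contained in the generation-$J$ cube with lower corner $(K_2,\ldots,K_n)/2^J$. The assumption $\tau\in(0,\frac{n-1}{np})$ is exactly the condition $\az=np\tau\in(0,n-1)$ needed to apply the construction of $A_j^{n-1}$ in dimension $n-1$, and the properties \eqref{vor1} and \eqref{vor2} then yield the two-sided estimate $N_{P,j}\asymp 2^{(j-J)np\tau}$ uniformly in the location $K$.

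Substituting this count and using $|P|^{-\tau}=2^{Jn\tau}$, a telescoping cancellation produces
\[
\frac{1}{|P|^\tau}\left\{\sum_{j=\max\{J,j_1\}}^{N}2^{j(s+\frac{n}{2}-\frac{n}{p})q}|\lambda_j|^q\,2^{(j-J)n\tau q}\right\}^{1/q}\asymp\left\{\sum_{j=\max\{J,j_1\}}^{N}2^{j(s+\frac{n}{2}+n\tau-\frac{n}{p})q}|\lambda_j|^q\right\}^{1/q},
\]
the factor $2^{Jn\tau}$ from $|P|^{-\tau}$ cancelling against the $2^{-Jn\tau}$ extracted from the geometric sum. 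Taking the supremum over $P$, the maximum is achieved whenever $\max\{J,j_1\}=j_1$ (in particular at $P=[0,1]^n$, i.e.\ $J=0$), while cubes with $\ell(P)>1$ contribute no more because $|P|^{-\tau}<1$; this yields the equivalence \eqref{limit7}. The main obstacle is precisely the uniformity in $K$ of the count $N_{P,j}$: since $T_j$ is deliberately distributed non-uniformly along the face $\{x_1=0\}$, a priori a badly placed subcube could capture far more marked indices than the average, which would destroy the $\asymp$. That scenario is ruled out by \eqref{vor1}, which was built into $A_j^{n-1}$ for exactly this purpose; once this uniform combinatorial input is in hand, the remainder is routine algebra.
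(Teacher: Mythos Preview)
Your proof is correct and follows essentially the same route as the paper: invoke Proposition \ref{wav-type2}, read off the nonzero wavelet coefficients of $g_N$, reduce to dyadic cubes $P=Q_{J,K}$ with $K_1=0$ inside $[0,1]^n$, and count via \eqref{vor1}--\eqref{vor2} (applied in dimension $n-1$ with $\alpha=np\tau$). One small imprecision worth fixing: the two-sided estimate $N_{P,j}\asymp 2^{(j-J)np\tau}$ does \emph{not} hold uniformly in $K$---the lower bound in \eqref{vor2} is stated only at $K=0$, and for cubes $Q_{J,K}$ away from the origin the count can vanish---but your argument in fact only uses the upper bound uniformly in $K$ (which \eqref{vor1} together with \eqref{vor2} does give) and the lower bound at the single cube $P=[0,1]^n$, so the conclusion is unaffected.
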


\begin{proof}
First, observe that
$\alpha=np\tau < n-1$ as required in our previous construction.
As in the case of   $f_N$, we conclude that
\begin{align*}
\|g_N \|_{B^{s, \tau}_{p,q} (\rn)}
& \asymp    \sup_{J\in\{j_1,\ldots, N\}}\, 2^{Jn\tau} \,
\left\{\sum_{j  =  J}^{N}
2^{j(s+\frac n2-\frac np)q} \,  \lf( \sum_{\{m:~ Q_{j,m} \subset Q_{J,0}\}} |\langle g_N,\,\psi_{1,j,m}\rangle|^p\right)^{q/p}\right\}^{1/q}
\\
& \quad +     \sup_{J\in\{0,\ldots, j_1 -1\}}\, 2^{Jn\tau} \,
\left\{\sum_{j  =  j_1}^{N}
2^{j(s+\frac n2-\frac np)q} \,  \lf( \sum_{\{m:~ Q_{j,m} \subset Q_{J,0}\}} |\langle g_N,\,\psi_{1,j,m}\rangle|^p\right)^{q/p}\right\}^{1/q}
\\
&=:  S^- + S^+ \, .
\end{align*}
Using the properties of $A_j^{n-1}$, namely, \eqref{vor1} and \eqref{vor2} with $n$ replaced by $n-1$, we find that
\begin{align*}
S^+ & \asymp
\sup_{J\in\{0,\ldots, j_1 -1\}}\, 2^{Jn\tau} \,
\left\{\sum_{j  =  j_1}^{N}
2^{j(s+\frac n2-\frac np)q} \, |\lambda_{j}|^q 2^{(j - J)\alpha q/p}\right\}^{1/q}
\\
& \asymp  \sup_{J\in\{0,\ldots, j_1 -1\}}\, 2^{J ( n\tau - \frac{\alpha}{p})} \,
\left\{\sum_{j  =  j_1}^{N}
2^{j(s+\frac n2 + \frac \alpha p-\frac np)q} \, |\lambda_{j}|^q \right\}^{1/q}
 \asymp
\left\{\sum_{j  =  j_1}^{N}
2^{j(s+\frac n2 + n\tau -\frac np)q} \, |\lambda_{j}|^q \right\}^{1/q} \, .
\end{align*}
Similarly,
\begin{align*}
S^- & \asymp
 \sup_{J\in\{j_1,\ldots, N\}}\, 2^{Jn\tau} \,
\left\{\sum_{j  =  J}^{N}
2^{j(s+\frac n2-\frac np)q} \,  |\lambda_{j}|^q 2^{(j - J)\alpha q/p}\right\}^{1/q}\\
  &\asymp   \, \left\{\sum_{j  = j_1}^{N}
2^{j (s+\frac n2 + n \tau -\frac np)q} \, |\lambda_{j}|^q \right\}^{1/q}
\, .
\end{align*}
This proves our desired estimate of Lemma \ref{limit6}.
\end{proof}

The parameter $j_1$ does not play a role in Lemma \ref{limit6},
but it will be used for the next result of the estimate of $\langle g_N, \cx\rangle$.

\begin{lemma}\label{scalar2}
Let $j_1 \ge j_0$ be sufficiently large with $j_0$ as in \eqref{intpsi}. Then, for any $N\in\nn\cap[j_1,\fz)$,
\begin{align}\label{test5}
\langle g_N, \cx\rangle
\asymp
 \sum_{j=j_1}^N  \lambda_{j} \,  2^{ j(\alpha - \frac n2)}
\end{align}
with the positive equivalence   constants independent  of sufficiently large $N$ and $\{\lz_j\}_{j=j_1}^\fz$.
\end{lemma}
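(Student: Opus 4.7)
The plan is to insert the definition \eqref{folge3} of $g_N$ into $\langle g_N,\cx\rangle = \int_{[0,1]^n} g_N(x)\,dx$ and interchange summation and integration to obtain
\[
\langle g_N,\cx\rangle \;=\; \sum_{j=j_1}^{N} \lambda_j \sum_{m\in T_j} 2^{jn/2}\int_{[0,1]^n}\psi_1(2^j x - m)\,dx,
\]
then use the tensor product structure \eqref{wapsi1} of $\psi_1$ together with the fact $m_1 = 0$ for every $m\in T_j$ to factor the inner integral into one one-dimensional integral of $\widetilde\psi(2^j x_1)$ and $n-1$ one-dimensional integrals of $\widetilde\phi(2^j x_i - m_i)$. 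This mirrors the opening computation in the proof of Lemma \ref{scalar1}.

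For the first factor I will exploit the support condition $\supp\widetilde\psi\subset[K,L]$ with $K<0<L$ and $j\ge j_1\ge j_0$ (so that $2^{-j}L\le 1$) to obtain
\[
\int_0^1 \widetilde\psi(2^j x_1)\,dx_1 \;=\; 2^{-j}\int_0^L \widetilde\psi(s)\,ds \;=:\; 2^{-j}C_\psi,
\]
where $C_\psi\neq 0$ by the nonvanishing hypothesis \eqref{intpsi}. For each of the remaining factors, the constraint $m_i < 2^j-\widetilde L$ built into $T_j$, together with $\supp\widetilde\phi\subset[0,\widetilde L]$, forces $\supp\widetilde\phi(2^j\cdot - m_i)\subset[0,1]$, so
\[
\int_0^1 \widetilde\phi(2^j x_i - m_i)\,dx_i \;=\; 2^{-j}\int_0^{\widetilde L}\widetilde\phi(s)\,ds \;=:\; 2^{-j}C_\phi\,>\,0.
\]
Assembling these identities produces the clean formula
\[
\langle g_N,\cx\rangle \;=\; C_\psi\,C_\phi^{\,n-1}\sum_{j=j_1}^{N} \lambda_j\,2^{-jn/2}\,|T_j|,
\]
so the lemma is reduced to the uniform two-sided estimate $|T_j|\asymp 2^{j\alpha}$ for all $j\ge j_1$.

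The main obstacle will be this last counting step. By construction $|A_j^{n-1}| = \lfloor 2^{j\alpha}\rfloor$, and $T_j$ is obtained from $A_j^{n-1}$ by discarding those $(0,m_2,\ldots,m_n)$ with $\max_{i\ge 2} m_i\ge 2^j-\widetilde L$, i.e., those lying in a boundary strip of $[0,1)^{n-1}$ of thickness $\widetilde L\cdot 2^{-j}$. I plan to exploit the flexibility ``which one we take is unimportant'' in the recursive definition of $A_j^{n-1}$: within each ring cube $Q_{J,K}\subset Q_{J-1,0}\setminus Q_{J,0}$, the number of scale-$j$ subcubes that must be selected is $O(2^{(j-J)\alpha})$, while the number of scale-$j$ subcubes $Q_{j,m}\subset Q_{J,K}$ satisfying $\max_{i\ge 2} m_i<2^j-\widetilde L$ is at least $(1-(n-1)\widetilde L\,2^{-(j-J)})\cdot 2^{(j-J)(n-1)}$. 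Because $\alpha = np\tau < n-1$ under the hypothesis $\tau\in(0,(n-1)/(np))$, the latter quantity dominates the former as soon as $j-J$ is sufficiently large; taking $j_1$ correspondingly large, the recursion can be carried out entirely outside the strip, so that $|T_j|=|A_j^{n-1}|=\lfloor 2^{j\alpha}\rfloor\asymp 2^{j\alpha}$ for every $j\ge j_1$. Inserting this into the displayed identity yields \eqref{test5}.
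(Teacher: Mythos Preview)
Your opening computation is exactly the paper's: factor the integral via \eqref{wapsi1}, use $m_1=0$ and the support conditions on $\widetilde\psi,\widetilde\phi$ to obtain the closed form $\langle g_N,\cx\rangle = C_\psi\,C_\phi^{\,n-1}\sum_{j=j_1}^{N}\lambda_j\,2^{-jn/2}\,|T_j|$, after which everything reduces to $|T_j|\asymp 2^{j\alpha}$.

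The difference is in how you establish this last estimate. The paper does \emph{not} alter the construction of $A^{n-1}_j$; instead it exploits \eqref{vor2} directly. Choosing $J_j$ so that $2^{j-J_j}\le 2^j-\widetilde L<2^{j-J_j+1}$, every element of $A^{n-1}_j$ whose associated cube lies in $Q_{J_j,0}$ automatically satisfies $\max_i m_i<2^{j-J_j}\le 2^j-\widetilde L$ and hence belongs to $T_j$; by \eqref{vor2} there are at least $\lfloor 2^{(j-J_j)\alpha}\rfloor> 2^{-\alpha}(2^j-\widetilde L)^\alpha-1$ such elements. For $j\ge j_1$ large this lower bound is $\asymp 2^{j\alpha}$, while $|T_j|\le|A^{n-1}_j|=\lfloor 2^{j\alpha}\rfloor$ gives the upper bound. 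Your route---revisiting the freedom in the recursive construction so as to place all of $A^{n-1}_j$ outside the strip, forcing $|T_j|=|A^{n-1}_j|$---is also legitimate, since the modified sets still satisfy \eqref{vor1}--\eqref{vor2} and hence Lemma \ref{limit6} remains valid. The paper's argument has the virtue of working with the sets as already defined; yours buys an exact identity for $|T_j|$ at the cost of retroactively adjusting the construction. One caution: your stated lower bound $(1-(n-1)\widetilde L\,2^{-(j-J)})\,2^{(j-J)(n-1)}$ is not quite right uniformly in $J$---for $J\ge 2$ and $j$ large the ring cubes sit inside $[0,2^{-J+1}]^{n-1}\subset[0,\tfrac12]^{n-1}$ and the strip constraint is vacuous, so the issue is really only at $J=1$, where your inequality is correct with $j-J=j-1$.
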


\begin{proof}
Since the cardinality of $A^{n-1}_j$ equals $\lfloor 2^{j\alpha}\rfloor$, we know
the cardinality  $|T_j|<  2^{j\alpha}$  for any $j\in\{j_1,j_1+1,\ldots\}$.

On another hand, let $j_1$ be large enough such that $2^{j_1}>\widetilde L$.
For each $j\ge j_1$, let $J_j\in\{1,\ldots, j\}$  be the unique number satisfying  $2^{j-J_j}\le 2^{j}-\widetilde L< 2^{j-J_j+1}$.
Then,
by the property of $A^{n-1}_j$ [\eqref{vor2} with $n$ replaced by $n-1$], we conclude that
\begin{align*}
|T_j|&\ge \lf|\lf\{ m:=(0,m_2, \ldots , m_n): ~(m_2, \ldots \, , m_n) \in A^{n-1}_j,\   \max\{m_2,\ldots,m_n\} <2^{j-J_j}\right\}\right|\\
&\ge \lfloor 2^{(j-J_j)\az} \rfloor \ge 2^{(j-J_j)\az}-1 >2^{-\az} \lf(2^j-\widetilde L\right)^\az -1.
\end{align*}
Altogether we obtain
$$2^{-\az}\lf(2^j-\widetilde L\right)^\az -1<|T_j|<  2^{j\alpha}.$$
From this and an argument similar to that used in the proof of Lemma \ref{scalar1}, we find that
\eqref{test5} holds  if
 \[
  2^{-\az}\lf(2^j-\widetilde L\right)^\az -1 \asymp 2^{j\alpha} \, ,
 \]
which is true when $j\in\{j_1,j_1+1,\ldots\}$ and $j_1$ is sufficiently large. This proves Lemma \ref{scalar2}.
\end{proof}

%&&&&&&&&&&&&&&&&&&&&&&&&&&&&&&&&&&&&&&&&&&&&&&&&&&&&&&&&&&&&&&
%&&&&&&&&&&&&&&&&&&&&&&&&&&&&&&&&&&&&&&&&&&&&&&&&&&&&&&&&&&&&&&&

\subsection{Proofs of Theorems \ref{limit5} and \ref{limit5b}\label{s5.3}}

%&&&&&&&&&&&&&&&&&&&&&&&&&&&&&&&&&&&&&&&&&&&&&&&&&&&&&&&&&&&&&&&
%&&&&&&&&&&&&&&&&&&&&&&&&&&&&&&&&&&&&&&&&&&&&&&&&&&&&&&&&&&&&&&&&

We are now ready to prove Theorems \ref{limit5} and \ref{limit5b}.

{\bf Step 1)} Sufficiency.
The  {\em if}-cases in Theorems \ref{limit5} and \ref{limit5b} are  covered by Theorems \ref{limit2}
and \ref{limit2-}.

{\bf Step 2)} Necessity. To show that $\langle f,\cx\rangle$ cannot be extended to a Besov-type space $\bt$,
it suffices to find a  sequence $\{f_N\}_{N}$ of functions in  $\bt$
such that $\{|\langle f_N,\cx\rangle|/\|f_N\|_{\bt}\}_{N}$ is unbounded.
Below we only concentrate on limiting cases (if there is one), because
for the remaining parameter constellations, one  can use the elementary embedding mentioned in Remark \ref{grund}.

{\bf Substep 2.1)}
Let $p\in[1,\fz]$, $s= \frac 1p - 1$, $q\in(1,\infty]$ and $\tau\in[0,\frac{n-1}{np}]$.
We choose a sequence of positive real numbers, $\{\mu_j\}_{j=j_0}^\infty$, such that
\[
 \sum_{j=j_0}^\infty  \mu_{j}  =\infty \qquad \mbox{and} \qquad  \lf\{\sum\limits_{j  = j_0}^{\infty} \, \mu_{j}^q \right\}^{1/q}<\infty\,,
\]
where $j_0$ is as in \eqref{intpsi}.
Now let $\lambda_j := 2^{-j(\frac n2 -1)} \mu_j$ for any $j \in\{j_0,j_0+1,\ldots\}$.
Then the associated sequence $\{f_N\}_{N=j_0}^\fz$, defined in \eqref{test}, is bounded in
$B^{\frac 1p -1, \tau}_{p,q} (\rn)$ (see Lemma \ref{limit3}), while
$\{\int_\rn f_N(x)  \cx (x) \, dx\}_{N=j_0}^\fz$ is unbounded (see Lemma \ref{scalar1}).
Thus, in this case, $\{f_N\}_{N=j_0}^\fz$ is the desired sequence.

{\bf Substep 2.2)}
Let $p\in (0,\fz]$, $s= \frac np - n \tau - 1$, $q\in(0,1]$ and $\tau\in (\frac{n-1}{np},\fz)$.
Define $\lz_j:=j2^{-j(\frac n2-1)}$ for any $j\in\nn_0$, and let $\{f_N\}_{N=j_0}^\fz$ be as in \eqref{test}.
Then Lemma \ref{scalar1} tells us that, for any $N\in\{j_0,j_0+1,\ldots\}$,
$$
|\langle f_N,\cx\rangle| \gtrsim  \sum_{j=j_0}^N  j \asymp \frac{N^2-j_0^2}2.$$
Meanwhile, \eqref{limit4} gives that, for any $N\in\{j_0,j_0+1,\ldots\}$,
\begin{align*}
 \|f_N \|_{B^{\frac np-n\tau-1, \tau}_{p,q} (\rn)}
&\asymp
 \sup_{J\in\{j_0, \ldots , N\}}\, 2^{J(n\tau - \frac{n-1}{p})} \,
\left\{\sum\limits_{j  = J}^{N}
2^{j (\frac {n-1}p-n\tau)q} \, j^q \right\}^{1/q}.
\end{align*}
Since $\frac {n-1}p-n\tau < 0$, it follows that, for any $J\in\{j_0,j_0+1,\ldots\}$ and $N\in\{J, J+1,\ldots\}$,
\[
\sum_{j  = J}^{N} 2^{j (\frac {n-1}p-n\tau)q} \, j^q \asymp  J^q 2^{J(\frac{n-1}p-n\tau)q}.
\]
This implies that, for any $N\in\{j_0,j_0+1,\ldots\}$,
\begin{align*}
 \|f_N \|_{B^{\frac np-n\tau-1, \tau}_{p,q} (\rn)}
&\asymp
 \sup_{J\in\{j_0, \ldots , N\}}\, 2^{J(n\tau - \frac{n-1}{p})} J 2^{J(\frac{n-1}p-n\tau)}\asymp N.
\end{align*}
Therefore,
$$\frac{|\langle f_N,\cx\rangle|}{\|f_N\|_{B^{\frac np-n\tau-1, \tau}_{p,q} (\rn)}} \to \fz$$
as $N\to \fz$. Thus, in this case, $\{f_N\}_{N=j_0}^\fz$ is the desired sequence.

{\bf Substep 2.3)} Let $ p\in(0, 1)$, $ q \in(1, \infty]$, $\tau \in (0, \frac{n-1}{np})$
and $s= (1-\tau p)n(\frac 1p -1)$.
We shall work with the family $\{g_N\}_{N=j_1}^\fz$ defined in \eqref{folge3} with $\az=pn\tau$.
Using Lemma \ref{limit6}, we obtain, for any $N\in\{j_1,j_1+1,\ldots\}$,
\begin{align}\label{limit8}
 \|g_N \|_{B^{s, \tau}_{p,q} (\rn)}  \asymp  \left\{\sum\limits_{j  = j_1}^{N}
2^{j (-\frac n2 + pn\tau)q} \, |\lambda_{j}|^q \right\}^{1/q}\, .
\end{align}
On another hand, Lemma \ref{scalar2} implies that, for any $N\in\{j_1,j_1+1,\ldots\}$,
\begin{align}\label{test6}
\langle g_N, \cx\rangle
\asymp
 \sum_{j=j_1}^N  \lambda_{j} \,  2^{ j(pn\tau  - \frac n2)}\, .
\end{align}
If $\{\mu_j\}_{j=j_1}^\fz$ is
the same as in Substep 2.1) with $j_0$ therein replaced by $j_1$,
and if we choose $\lambda_j$ such that $\lambda_j
:=  2^{-j(pn\tau  - \frac n2)}\mu_{j} $ for any $j\in\{j_1,j_1+1,\ldots\}$,
one finds that  $\{|\langle g_N,\cx\rangle|/\|g_N\|_{\bt}\}_{N=j_1}^\fz$ is unbounded. Thus, in this case, $\{g_N\}_{N=j_1}^\fz$ is the desired sequence.

{\bf Substep 2.4)} Let $p\in(0,1)$, $q\in(1, \infty]$, $\tau = \frac{n-1}{np}$ and $s= \frac 1p -1$.
Here we can employ the same example as in Substep 2.1) to obtain the desired sequence.

{\bf Substep 2.5)} Let $p\in(0, 1)$, $q \in(1, \infty]$, $\tau = 0$ and $s= n(\frac 1p -1)$.
We choose
\begin{align}\label{test9}
h_N(x) :=   \sum_{j=j_0}^N  \lambda_{j} \, \psi_{1,j,0} (x) \, , \qquad \forall\,x \in \rn, \quad \forall\,N \in\{j_0,j_0+1,\ldots\}.
\end{align}
As in Lemma \ref{limit3}, we have, for any $N\in\{j_0,j_0+1,\ldots\}$,
\[
 \|h_N \|_{B^{s, 0}_{p,q} (\rn)}  \asymp
\left\{\sum\limits_{j  = j_0}^{N}
2^{j (s+\frac n2-\frac np)q} \, |\lambda_{j}|^q \right\}^{1/q} \asymp \left\{\sum\limits_{j  = j_0}^{N}
2^{j (-\frac n2)q} \, |\lambda_{j}|^q \right\}^{1/q} \, .
\]
As in Lemma \ref{scalar1}, we obtain, for any $N\in\{j_0,j_0+1,\ldots\}$,
\[
\langle h_N, \cx\rangle  \asymp
 \sum_{j=j_0}^N  \lambda_{j} \,  2^{  j(- \frac n2)}\, .
\]
Then, letting $\{\mu_j\}_{j=j_0}^\fz$ be the same as in Substep 2.1 and  $\lambda_j :=
2^{\frac n2j}\mu_{j}$ for any $j\in\{j_0,j_0+1,\ldots\}$,
we can argue as before to obtain
$$\frac{|\langle h_N,\cx\rangle|}{\|h_N\|_{B^{n(1/p-1), 0}_{p,q} (\rn)}} \to \fz,\qquad N\to\fz.$$
 Thus, in this case, $\{h_N\}_{N=j_0}^\fz$ is the desired sequence.

Altogether we complete the proofs of Theorems \ref{limit5} and \ref{limit5b}.

%&&&&&&&&&&&&&&&&&&&&&&&&&&&&&&&&&&&&&&&&&&&&&&&&&&&&&&&&&&&&&&
%&&&&&&&&&&&&&&&&&&&&&&&&&&&&&&&&&&&&&&&&&&&&&&&&&&&&&&&&&&&&&&&

\subsection{Proof of Corollary \ref{klassisch}\label{s5.4}}

%&&&&&&&&&&&&&&&&&&&&&&&&&&&&&&&&&&&&&&&&&&&&&&&&&&&&&&&&&&&&&&&
%&&&&&&&&&&&&&&&&&&&&&&&&&&&&&&&&&&&&&&&&&&&&&&&&&&&&&&&&&&&&&&&&

If $p\in[1,\infty]$, then Corollary \ref{klassisch} follows directly from Theorem \ref{limit5}.
In case $p\in(0,1)$,  the if-assertion is contained in  Theorem \ref{limit5b}.
For the only if-assertion, we refer the reader to Substep 2.5) in the above proofs of
Theorems \ref{limit5} and \ref{limit5b}.

%&&&&&&&&&&&&&&&&&&&&&&&&&&&&&&&&&&&&&&&&&&&&&&&&&&&&&&&&&&&&&&
%&&&&&&&&&&&&&&&&&&&&&&&&&&&&&&&&&&&&&&&&&&&&&&&&&&&&&&&&&&&&&&&

\subsection{Proof of Theorem \ref{general}\label{s5.5}}

%&&&&&&&&&&&&&&&&&&&&&&&&&&&&&&&&&&&&&&&&&&&&&&&&&&&&&&&&&&&&&&&
%&&&&&&&&&&&&&&&&&&&&&&&&&&&&&&&&&&&&&&&&&&&&&&&&&&&&&&&&&&&&&&&&

Sufficiency can be proved as in case of $\cx$.
Necessity follows from a modification of our test functions.
As in Step 4) of the proof of Theorem \ref{charact} in Section \ref{s4},
it will be enough to deal with the existence of $\langle f, h_{1,j,0}\rangle $.
Recall,  $h_{1,j,0}=1$ on $Q_{j+1,0}$.
Now, to define the modified test functions we only select those wavelets $ \psi_{1,\ell,k}$ such that
their supports lie in between the hyperplane $x_1 =0$ and  $x_1 = 2^{-j-1}$ and have a nontrivial intersection with
the set $\{x \in \rr^n:~ x_2<0, \ldots, x_n <0\} $.
This leads to the following family
\begin{equation}
 \label{test10}
\widetilde{f}_N(x) :=   \sum_{\ell=\max\{j_0,j\}}^N  \lambda_{\ell} \hspace{-0.1cm}\sum_{\gfz{k=(0,k_2, \ldots , k_n)\in\zz^n}
{0 \le k_i < 2^{\ell-1} - \widetilde{L}, ~ i\in\{2, \ldots, n\}
}} \, \psi_{1,\ell,k} (x), \  \ \forall\,x \in \rn, \   \forall\, N \in \nn\cap [j_0,\fz).
\end{equation}
In a similar way our second family $\{{g}_N\}_{N=j_1}^\fz$ has to be modified.
Now we can proceed as in proofs of Theorems \ref{limit5} and \ref{limit5b}.
We omit further details.

\subsection*{Acknowledgements}
The authors would like to thank Professor Peter Oswald who made the manuscript \cite{Os18} available for us.
They would also like to thank the referee for her/his   careful reading and  
several valuable comments which   improved the presentation of
this article.
This project is supported by the National Natural Science Foundation of China
and the Deutsche Forschungsgemeinschaft (Grant No. 11761131002). Wen Yuan is also 
partially supported by the National Natural Science Foundation of China 
(Grant No. 11871100) and the Alexander von Humboldt Foundation. 
Dachun Yang (the corresponding author) is also 
partially supported by the National Natural Science Foundation of China (Grant Nos. 11571039 
and 11671185). 

%%%%%%%%%%% To ease editing, use normal size for the references:

\normalsize


\begin{thebibliography}{YYYYY00}

%% Use the widest label as parameter above.
%% Reference items can be numbered or have labels of your choice, as below.
%% Arrange the items in the alphabetical order of names (and not in the order of labels).

%% In IMPAN journals, only the title is italicized; boldface is not used.
%% Do NOT give the issue number unless the issues are paginated separately, as in Uspekhi below.

%% To ease editing, add:

\baselineskip=17pt


%%%%%%%%%%%%%



\bibitem{axz} R. Aulaskari, J. Xiao and R. Zhao, \emph{On subspaces and subsets
of BMOA and UBC}, Analysis 15 (1995), 101--121.



\bibitem{c75}
Z. Ciesielski, \emph{Constructive function theory and spline systems},
Studia Math. 53 (1975), 277--302.



\bibitem{codafe92}
A. Cohen, I. Daubechies and J.-C. Feauveau,
\emph{Biorthogonal bases of compactly supported wavelets},
Comm. Pure Appl. Math. 45 (1992), 485--560.



\bibitem{dx} G. Dafni and J. Xiao,
\emph{Some new tent spaces and duality theorems for fractional Carleson
measures and $Q_\alpha({\mathbb R}^n)$}, J. Funct. Anal. 208 (2004),
377--422.



\bibitem{dx05} G. Dafni and J. Xiao, \emph{The dyadic structure and atomic decomposition
of $Q$ spaces in several real variables}, Tohoku Math. J. (2) 57 (2005), 119--145.



\bibitem{el021}
{A.~El~Baraka}, \emph{An embedding theorem
for Campanato spaces},
Electron. J. Differential Equations 66
(2002), 1--17.



\bibitem{el022}
{A.~El~Baraka}, \emph{Function spaces of BMO
and Campanato type}, in: Proc. of the 2002 Fez Conference on
Partial Differential Equations, 109--115 (electronic), Electron. J.
Differ. Equ. Conf. 9,
Southwest Texas State Univ., San Marcos, TX, 2002.



\bibitem{el062}
{A.~El~Baraka},
\emph{Littlewood-Paley characterization for Campanato spaces},
J. Funct. Spaces Appl. 4 (2006),
193--220.



\bibitem{ejpx} M. Ess\'en, S. Janson, L. Peng and J. Xiao,
\emph{$Q$ spaces of several real variables}, Indiana Univ. Math. J. 49 (2000),
575--615.



\bibitem{ex} M. Ess\'en and J. Xiao, \emph{Some results on $Q_p$
spaces, $0<p<1$}, J. Reine Angew. Math. 485 (1997), 173--195.



\bibitem{FR}
D.~Faraco and K.~Rogers,
\emph{The Sobolev norm of characteristic functions with applications to the Calder{\'o}n inverse problem},
Quart. J. Math. 64 (2013), 133--147.



\bibitem{GSU}
G.~Garrig\'os, A.~Seeger and T.~ Ullrich, \emph{On uniform boundedness of dyadic averaging
operators in spaces of Hardy--Sobolev type}, Anal. Math. 43 (2017), 267--278.




\bibitem{GSU18}
G.~Garrig\'os, A.~Seeger and T.~ Ullrich,  \emph{The Haar system as a Schauder basis in spaces of Hardy-Sobolev type},
J. Fourier Anal. Appl. 24 (2018),  1319--1339.



\bibitem{GSU19}
G.~Garrig\'os, A.~Seeger and T.~ Ullrich,  \emph{Basis properties of the Haar system in limiting Besov spaces}, arXiv: 1901.09117 (2019).



\bibitem{Gu1}
A. B. Gulisashvili, \emph{On multipliers in Besov spaces}
(in Russian), Sapiski nautch. Sem. LOMI 135 (1984), 36--50.



\bibitem{Gu2}
A. B. Gulisashvili, \emph{Multipliers in
Besov spaces and traces of functions on subsets of the Euclidean $n$-space}
(in Russian), DAN SSSR  281 (1985), 777--781.




\bibitem{g82}
J. Gustavsson, \emph{On interpolation of weighted
$L^p$-spaces and Ovchinnikov's theorem}, Studia Math. 72 (1982), 237--251.




\bibitem{gp77}
J. Gustavsson and J. Peetre,
\emph{Interpolation of Orlicz spaces}, Studia Math. 60 (1977), 33--59.



\bibitem{h10}
A. Haar, \emph{Zur Theorie der orthogonalen Funktionensysteme}, Math. Ann. 69
(1910), 331--371.





\bibitem{HS13}
D. D.~Haroske and L.~Skrzypczak, \emph{Embeddings of Besov-Morrey spaces on bounded domains},
Sudia Math. {218} (2013), 119--144.




\bibitem{HMS16}
D. D.~Haroske, S.~Moura and L.~Skrzypczak, \emph{Smoothness Morrey spaces of regular distributions,
and some unboundedness property},
Nonlinear Anal. {139} (2016), 218--244.



\bibitem{KL}
J.-P.~Kahane and P.-G.~Lemari\'e-Rieusset, \emph{Fourier Series and Wavelets}, Gordon and
Breach Publ., Luxembourg, 1995.



\bibitem{Ka94}
A.~Kamont, \emph{Isomorphism of some anisotropic Besov and sequence spaces}, Studia
Math. 110 (1994), 169--189.




\bibitem{Ka96}
A.~Kamont, \emph{On hyperbolic summation and hyperbolic moduli of smoothness}, Constr. Approx. 12 (1996), 111--125.




\bibitem{Ka97}
A.~Kamont, \emph{A discrete characterization of Besov spaces}, Approx. Theory Appl. (N.S.) 13 (1997), 63--77.



\bibitem{kxzz}
P. Koskela, J. Xiao, Y. Zhang and Y. Zhou,  \emph{A quasiconformal composition problem
of $Q$-spaces}, J. Eur. Math. Soc. 19 (2017), 1159--1187.



\bibitem{Le16}
P. G. Lemari\'e-Rieusset,
\emph{The Navier-Stokes Problem in The 21st Century},
CRC Press, Boca Raton, FL, 2016.



\bibitem{Le16-2} P. G. Lemari\'e-Rieusset, \emph{Parabolic Morrey spaces and mild solutions of the
Navier-Stokes equations. An interesting answer through a silly method to a stupid question},
in: Recent progress in the theory of the Euler and Navier-Stokes equations, 126--136,
London Math. Soc. Lecture Note Ser., 430, Cambridge Univ. Press, Cambridge, 2016.



\bibitem{Le18} P. G. Lemari\'e-Rieusset, \emph{Sobolev multipliers, maximal functions and parabolic
equations with a quadratic nonlinearity}, J. Funct. Anal. 274 (2018), 659--694.




\bibitem{ly13} P. Li and Q. Yang, \emph{Wavelets and the well-posedness
of incompressible magneto-hydro-dynamic equations in Besov type
$Q$-space}, J. Math. Anal. Appl. 405 (2013), 661--686.



\bibitem{lz10} P. Li and Z. Zhai, \emph{Generalized Navier-Stokes equations
with initial data in local $Q$-type spaces}, J. Math. Anal. Appl.
369 (2010), 595--609.



\bibitem{lz} P. Li and Z. Zhai, \emph{Well-posedness and regularity of
generalized Navier-Stokes equations in some critical $Q$-spaces},
J. Funct. Anal. 259 (2010), 2457--2519.




\bibitem{lz12} P. Li and Z. Zhai, \emph{Riesz transforms
on $Q$-type spaces with application to quasi-geostrophic
equation}, Taiwanese J. Math. 16 (2012), 2107--2132.



\bibitem{LXY} P. Li, J. Xiao and Q. Yang,
\emph{Global mild solutions of fractional Navier-Stokes equations with
small initial data in critical Besov-$Q$ spaces},
Electron. J. Differential Equations  185 (2014), 1--37.



\bibitem{lsuyy1}
Y.~Liang, Y.~Sawano, T.~Ullrich, D.~Yang and W.~Yuan,
\emph{New characterizations of Besov-Triebel--Lizorkin-Hausdorff spaces including
coorbits and wavelets}, J. Fourier Anal. Appl. {18} (2012), 1067--1111.




\bibitem{lsuyy}
Y.~Liang, Y.~Sawano, T.~Ullrich, D.~Yang and W.~Yuan,
\emph{A new framework for generalized
Besov-type and Triebel--Lizorkin-type spaces},
{Dissertationes Math. (Rozprawy Mat.)} 489 (2013), 1--114.



\bibitem{m37}
J. Marcinkiewicz, \emph{Quelques theor\`emes sur les s\'eries
 orthogonales}, Ann. Soc. Polon. Math. 16 (1937),
84--96.



\bibitem{me} Y. Meyer, \emph{Wavelets and Operators}, Cambridge Univ. Press, 1992.



\bibitem{n85}
P. Nilsson, \emph{Interpolation of Banach lattices}, Studia Math. 82 (1985), 133--154.




\bibitem{Os79}
P.~Oswald, \emph{On inequalities for spline approximation and spline systems in the space $L^p$ $(0< p< 1)$}, in:
Approximation and Function Spaces, Proceedings Int. Conf. Gdansk 1979 (Z.~Ciesielski, ed.) PWN Warszawa and North Holland Amsterdam,
pp. 531--552, 1981.



\bibitem{Os81}
P.~Oswald, \emph{$L^p$-Approximation durch Reihen nach dem Haar-Orthogonalsystem und dem Faber-Schauder-System},
J. Approx. Theory  {33} (1981), 1--27.



\bibitem{Os18}
P.~Oswald, \emph{Haar system as Schauder basis in Besov spaces: the limiting
cases for  $0 < p\le  1$}, arXiv: 1808.08156 (2018).



\bibitem{r76}
S. Ropela, \emph{Spline bases in Besov spaces},
Bull. Acad. Polon. Sci. S\'er. Sci. Math. Astronom. Phys. 24 (1976),
319--325.




\bibitem{RS}
T. Runst and W. Sickel,  \emph{Sobolev Spaces of Fractional Order,
Nemytzkij Operators and Nonlinear Partial Differential Equation}, de Gruyter, Berlin, 1996.



\bibitem{syy}
Y. Sawano, D. Yang and W. Yuan, \emph{New applications of Besov-type
and Triebel--Lizorkin-type spaces}, J. Math. Anal. Appl. 363 (2010), 73--85.



\bibitem{s28} J. Schauder, \emph{Eine Eigenschaft
der Haarschen Orthogonalsysteme}, Math. Z. 28 (1928), 317--320.



\bibitem{SU1}
A.~Seeger and T.~Ullrich,
\emph{Haar projection numbers and failure of unconditional convergence in Sobolev spaces},
Math. Z. 285 (2017),  91--119.



\bibitem{SU2}
A.~Seeger and T.~Ullrich,
\emph{Lower bounds for Haar projections: deterministic examples},
Constr. Approx. 46 (2017), 227--242.



\bibitem{s99}
W.~Sickel,
\emph{Pointwise multipliers of Lizorkin-Triebel spaces}, in:
The Maz'ya anniversary collection, Vol. 2 (Rostock, 1998), 295--321, Operator Theory Adv. Appl. 110,
Birkh\"auser, Basel, 1999.





\bibitem{s99b}
W.~Sickel,
\emph{On pointwise multipliers for $F^s_{p,q}({\bf R}^n)$ in case
$\sigma_{p,q} < s < n/p$},  Ann. Mat. Pura Appl. (4) 176 (1999),
209--250.





\bibitem{s011}
W.~Sickel, \emph{Smoothness spaces related to Morrey spaces--a survey. I},
Eurasian Math. J. 3 (2012), 110--149.



\bibitem{s011a}
W.~Sickel, \emph{Smoothness spaces related to Morrey spaces--a survey. II},
Eurasian Math. J. 4 (2013), 82--124.



\bibitem{ST}
W.~Sickel and H.~Triebel, \emph{H\"older inequalities and sharp embeddings
in function spaces of $B^s_{p,q}$ and $F^s_{p,q}$ type}, Z. Anal.
Anwendungen {14} (1995), 105--140.



\bibitem{t73} H. Triebel, \emph{\"Uber die Existenz von Schauderbasen in Sobolev-Besov-R\"aumen. Isomorphiebeziehungen},
Studia Math. {46} (1973), 83--100.



\bibitem{t78} H. Triebel, \emph{On Haar bases in Besov spaces},
Serdica 4 (1978), 330--343.



\bibitem{t83} H. Triebel, \emph{Theory of Function Spaces}, Birkh\"auser
Verlag, Basel, 1983.



\bibitem{t02} H. Triebel,
\emph{Function spaces in Lipschitz domains and on Lipschitz manifolds. Characteristic functions as pointwise multipliers},
Rev. Mat. Complut. {15} (2002),   475--524.



\bibitem{t03} H. Triebel, \emph{Non-smooth atoms and pointwise multipliers in
function spaces}, Ann. Mat. Pura Appl. {182} (2003), 457--486.




\bibitem{t06} H. Triebel, \emph{Theory of Function Spaces III},
Birkh\"auser Verlag, Basel, 2006.



\bibitem{t08}
H. Triebel, \emph{Function Spaces and Wavelets on Domains},
EMS Tracts in Mathematics 7,
European Mathematical Society (EMS), Z\"urich, 2008.



\bibitem{t10} H. Triebel,
\emph{Bases in Function Spaces, Sampling, Discrepancy, Numerical Integration},
EMS Tracts in Mathematics 11, European Mathematical Society (EMS), Z\"urich,
2010.




\bibitem{t13b} H. Triebel, \emph{Local Function Spaces, Heat and Navier-Stokes Equations},
EMS Publ. House, Z\"urich, 2013.


\bibitem{t13} H. Triebel, \emph{Characterizations of some function spaces in terms of Haar wavelets},
Comment. Math.   53 (2013),  135--153.





\bibitem{t14} H. Triebel,  \emph{Hybrid Function Spaces, Heat and Navier-Stokes Equations},
EMS Tracts in Mathematics 24, European Mathematical Society (EMS), Z\"urich, 2014.




\bibitem{t15} H. Triebel,
\emph{Tempered Homogeneous Function Spaces},
EMS Series of Lectures in Mathematics, European Mathematical Society (EMS), Z\"urich, 2015.




\bibitem{woj1} P. Wojtaszczyk, \emph{The Franklin system is an unconditional basis in $H_1$},
Ark. Mat. 20 (1982), 293--300.




\bibitem{woj} P. Wojtaszczyk, \emph{A Mathematical Introduction to Wavelets},
Cambridge Univ. Press, Cambridge, 1997.




\bibitem{x} J. Xiao, \emph{Holomorphic $Q$ Classes}, Lecture Notes in
Mathematics 1767, Springer, Berlin, 2001.



\bibitem{x06} J. Xiao, \emph{Geometric $Q_p$ Functions},
Birkh\"auser Verlag, Basel, 2006.




\bibitem{x07} J. Xiao, \emph{Homothetic variant of fractional Sobolev space
with application to Navier-Stokes system}, Dyn. Partial Differ. Equ.
4 (2007), 227--245.



\bibitem{x08} J. Xiao, \emph{The $Q_p$ Carleson measure problem},
Adv. Math. {217} (2008), 2075--2088.




\bibitem{yy1}
D. Yang and W. Yuan, \emph{A new class of function spaces connecting Triebel--Lizorkin
spaces and Q spaces}, J. Funct. Anal. 255 (2008), 2760--2809.



\bibitem{yy2}
D. Yang and W. Yuan, \emph{New Besov-type spaces and Triebel--Lizorkin-type spaces
including Q spaces}, Math. Z. 265 (2010), 451--480.



\bibitem{yy102}
D. Yang and W. Yuan, \emph{Characterizations of Besov-type and Triebel--Lizorkin-type spaces via
maximal functions and local means}, Nonlinear Anal. 73 (2010), 3805--3820.




\bibitem{yy4} D. Yang and W. Yuan,
\emph{Relations among Besov-type spaces, Triebel--Lizorkin-type
spaces and generalized Carleson measure spaces},
Appl. Anal. 92 (2013),
549--561.



\bibitem{yyz} D. Yang, W. Yuan and C. Zhuo,
\emph{Complex interpolation on Besov-type and Triebel--Lizorkin-type
spaces},  Anal. Appl. (Singap.) 11 (2013), no. 5, 1350021, 45 pp.




\bibitem{ysy} W. Yuan, W. Sickel and D. Yang,
\emph{Morrey and Campanato Meet Besov, Lizorkin and Triebel}, Lecture Notes
in Mathematics 2005, Springer-Verlag, Berlin, 2010.




\bibitem{ysy13} W. Yuan, W. Sickel and D. Yang,
\emph{On the coincidence of
certain approaches to smoothness spaces related to Morrey spaces},
Math. Nachr. {286} (2013), 1571--1584.





\bibitem{ysy15}
W. Yuan, W. Sickel and D. Yang,
\emph{Interpolation of Morrey--Campanato and  related smoothness spaces},
 Sci. China Math. 58 (2015), 1835--1908.



\bibitem{ysy18}
W. Yuan, W. Sickel and D. Yang, \emph{The Haar wavelet and Besov-type spaces}, Preprint.


\end{thebibliography}
\end{document}